\newcommand{\rrvert}{\vert}
\newcommand{\llvert}{\vert}
\newcommand{\eqref}[1]{(\ref{#1})}
\newtheorem{theorem}{Theorem}[section]
\newtheorem{corollary}[theorem]{Corollary}
\newtheorem{lemma}[theorem]{Lemma}
\newcommand{\ind}{\mathbh{1}}
\newcommand{\parrow}{\mathop{\longrightarrow}\limits^{p_{\nu}}_{n \to\infty}}
\newcommand{\asarrows}{\mathop{\rightarrow}\limits^{\mathrm{a.s.}}}
\newcommand{\Poisson}{\mathcal{P}}
\begin{document}
\begin{frontmatter}

\title{Epidemics on random intersection graphs}
\runtitle{Epidemics on random intersection graphs}

\begin{aug}
\author[1]{\fnms{Frank G.} \snm{Ball}\thanksref{t1}\ead[label=e1]{Frank.Ball@nottingham.ac.uk}},
\author[2]{\fnms{David J.} \snm{Sirl}\corref{}\thanksref{t1}\ead[label=e2]{D.Sirl@lboro.ac.uk}}
\and
\author[3]{\fnms{Pieter} \snm{Trapman}\thanksref{t2}\ead[label=e3]{ptrapman@math.su.se}}
\thankstext{t1}{Supported by the UK Engineering and Physical Sciences
Research Council under research Grant Number EP/E038670/1.}
\thankstext{t2}{Supported by Riksbankens Jubileumsfond of the Swedish
Central Bank and Vetenskapsr{\aa}det (Swedish Research Council) project 2010-5873.}
\runauthor{F. G. Ball, D. J. Sirl and P. Trapman}
\affiliation{University of Nottingham, University of Nottingham
and\break
Stockholm University}

\address[1]{F. Ball\\
School of Mathematical Sciences\\
University of Nottingham\\
University Park, Nottingham NG7 2RD\\
United Kingdom\\
\printead{e1}}

\address[2]{D. Sirl\\
School of Mathematics\\
Loughborough University\\
Loughborough, Leicestershire LE11 3TU\\
United Kingdom\\
\printead{e2}}

\address[3]{P. Trapman\\
Department of Mathematics\\
Stockholm University\\
SE-106 91 Stockholm\\
Sweden\\
\printead{e3}}
\end{aug}

\received{\smonth{4} \syear{2012}}
\revised{\smonth{6} \syear{2013}}

%
\begin{abstract}
In this paper we consider a model for the spread of a stochastic SIR
(Susceptible $\to$ Infectious $\to$ Recovered) epidemic on a network of
individuals described by a random intersection graph. Individuals
belong to a random number of cliques, each of random size, and
infection can be transmitted between two individuals if and only if
there is a clique they both belong to. Both the clique sizes and the
number of cliques an individual belongs to follow mixed Poisson
distributions. An infinite-type branching process approximation (with
type being given by the length of an individual's infectious period)
for the early stages of an epidemic is developed and made fully
rigorous by proving an associated limit theorem as the population size
tends to infinity. This leads to a threshold parameter $R_*$, so that
in a large population an epidemic with few initial infectives can give
rise to a large outbreak if and only if $R_* > 1$. A functional
equation for the survival probability of the approximating
infinite-type branching process is determined; if $R_* \le1$, this
equation has no nonzero solution, while if $R_*>1$, it is shown to
have precisely one nonzero solution. A law of large numbers for the
size of such a large outbreak is proved by exploiting a single-type
branching process that approximates the size of the susceptibility set
of a typical individual.
\end{abstract}

%
\begin{keyword}[class=AMS]
\kwd[Primary ]{60K35}
\kwd{92D30}
\kwd{05C80}
\kwd[; secondary ]{60J80}
\kwd{91D30}
\end{keyword}
\begin{keyword}
\kwd{Epidemic process}
\kwd{random intersection graphs}
\kwd{multi-type branching processes}
\kwd{coupling}
\end{keyword}

\end{frontmatter}

\section{Introduction}
Traditional models for the spread of SIR (Susceptible $\to$ Infectious
$\to$ Recovered) epidemics \cite{Ande00,Diek00} are based on the
homogeneous mixing assumption, that is, all pairs of individuals in the
population contact each other at the same rate, independently of each
other. Generalizations of this model have been proposed by introducing
household structure into the population \cite{Ball97}, where contacts
between household members are more frequent than other contacts; by
introducing a (social) network structure \cite{Ande99,Newm02}, where
contacts are only possible between pairs of individuals that share a
connection in the network; or both~\cite{Ball09,Ball10}. In most
models for epidemics on networks, the network is modelled by a random
graph constructed via the configuration model \cite{Moll95},
Chapter~3 of \cite{Durr06}.
In this construction one can control the degree distribution of the
vertices, but the resulting network is locally tree-like,\vadjust{\goodbreak} in the sense
that the network contains hardly any cliques (small completely
connected groups) or short loops. In real social networks, cliques are
not sparse: ``the friends of my friends are likely to be my friends as
well''. This feature of networks has been captured (among other models,
such as those in \cite{Pen03,Newm09,Glee09}) by random intersection
graphs, introduced in \cite{Karo99} 
and further studied in, for example, \cite{Brit08,Deij09,Shan10}
(see \cite{Boll10} for a related model). Random intersection graphs may
be seen as models for overlapping groups/cliques, in which a contact
between two individuals is possible only if there is a group to which
they both belong. These graphs are also known as random key graphs in
computer science \cite{Jawo09} and are related to Rasch models \cite
{Rasc61} in the social sciences.
In our paper, and in most random intersection graph models in the
literature, the resulting graph still has a tree-like structure, though
now at the level of cliques. This structure allows for analysis, but
arguably only captures some features of real (social) networks. It is
possible to make the graphs more realistic by incorporating spatial
location \cite{Gupt08}, but this makes the model intractable for our purposes.

The aim of this paper is to study SIR epidemics on random intersection
graphs. Specifically, we use branching process approximations to derive
(i) a threshold parameter $R_*$, which determines whether an epidemic
with few initial infectives can become established and infect a
nonnegligible proportion of the population, an event we call a large
outbreak, (ii) the probability that a large outbreak occurs and (iii)
the fraction of the population that is infected by a large outbreak.
These approximations are made fully rigorous as the population size
tends to infinity by proving associated limit theorems.

The only previous rigorous study of epidemics on random intersection
graphs is \cite{Brit08}. We extend the analysis of \cite{Brit08} in
three directions. First, we allow more general distributions for both
group size and the number of groups a typical individual belongs to.
In \cite{Brit08}, both of these quantities follow Poisson
distributions; here we allow them to follow mixed-Poisson
distributions. Moreover, as discussed in Section~\ref{disc}, we expect
similar results to hold when they both follow quite general
distributions, though our proofs are valid only for the mixed-Poisson case.
Secondly, we allow for an arbitrary infectious period distribution,
unlike in \cite{Brit08} where a Reed--Frost-type
model (Section~1.2 of \cite{Ande00}), which effectively has a constant infectious period, is
used. Thirdly, we give a formal proof of a law of large numbers for the
final outcome of a large outbreak, a result that was conjectured but
not proved in \cite{Brit08}. Introducing variable infectious periods
significantly complicates the analysis.
We note that for random infectious periods, our model is not covered
by Section~5 of \cite{Boll10}, since we need directed inhomogeneous
random graphs and the proofs in \cite{Boll10} rely heavily on the
structure of undirected graphs. Therefore, we need to develop
alternative techniques to determine the fraction of the population that
is infected by a large outbreak.

The remainder of the paper is organized as follows. Section~\ref{epRIG}
gives a brief introduction to random intersection graphs and SIR
epidemics defined upon them.
The main results of the paper, together with associated heuristic
explanations, are given in Section~\ref{Mainresults}. In particular, in
Section~\ref{earlystages} we show how the early stages of an epidemic
in our model can be approximated by a multitype (forward) branching
process (whose type space is in general uncountable), yielding a
threshold parameter $R_*$ and the approximate probability of a large
outbreak. In Section~\ref{finaloutcome}, a single-type (backward)
branching process, which enables the proportion of the population that
is infected by a large outbreak to be determined, is described. The key
limit theorems of the paper are stated in Section~\ref{SIRRIG}. They
show that, if there are few initial infectives, then in a large
population: (i) a large outbreak occurs with nonzero probability if
and only if the forward branching process is supercritical; (ii) the
probability that a large outbreak occurs is close to the probability
that the forward branching process survives; (iii) if there is a large
outbreak, then the proportion of the population that is infected by the
epidemic is close to the survival probability of the backward branching
process. The forward branching process is studied in Section~\ref{forwardBP}, where it is shown that the process survives with nonzero
probability if and only if $R_*>1$ and that the survival probability
may be obtained using a functional equation, which, as is proved in
Appendix \ref{appendix1}, has at most one nonzero solution. The limit
theorems corresponding to the forward and backward branching processes
are proved in Sections~\ref{pextinctionthm} and \ref{pfinalsizethm},
respectively.
Extension to more general distributions of clique size and the number
of groups a typical individual belongs to is
discussed briefly in Section~\ref{disc}. Explicit expressions, in terms
of Gontcharoff polynomials, for $R_*$ and for the
probability generating function(als) of the offspring distributions of
the backward and forward branching processes (which enable the survival
probabilities of these processes to be computed) are derived in
Appendix \ref{FSRV}.

\section{Random intersection graphs and epidemics thereon}\label{epRIG}

\subsection{Notation}
Throughout, $\mathbb{N}$ denotes the set of natural numbers not
including~$0$, while $\mathbb{Z}_+ = \mathbb{N} \cup\{0\}$.
For $x\geq0$, $\lfloor x \rfloor= \max(y \in\mathbb{Z}_+ \dvtx y\leq x)$
is the floor of $x$, and $\lceil x \rceil= \min(y \in\mathbb{Z}_+ \dvtx y\geq x)$ is the ceiling of $x$.

Furthermore, we write
\begin{eqnarray*}
f(x)&=&O\bigl(g(x)\bigr) \qquad \mbox{if }  \limsup_{x\to\infty}
\bigl|f(x)/g(x)\bigr| < \infty,
\\
f(x)&=&o\bigl(g(x)\bigr) \qquad\mbox{if }  \lim_{x\to\infty} f(x)/g(x) = 0
\qquad\mbox {and}
\\
f(x)&=&\Theta\bigl(g(x)\bigr) \qquad \mbox{if }  0 < \liminf_{x\to\infty}
\bigl|f(x)/g(x)\bigr| \leq\limsup_{x\to\infty} \bigl|f(x)/g(x)\bigr| < \infty.
\end{eqnarray*}
A (directed or undirected) graph is \textit{simple} if it contains no
parallel edges (edges that share both end-vertices) or self-loops
(edges with only one end-vertex). In a directed graph, edges are
parallel if they share both end-vertices and have the same direction.
In a \textit{multi-graph} self-loops and parallel edges are allowed. We
may construct a directed graph from an undirected one by replacing
every undirected edge by two directed edges with the same end-vertices
but having opposite directions. If we construct a simple graph from a
multi-graph, we do this by merging parallel edges and removing self-loops.

We use $\mathbb{P}$ for general unspecified probability measures, for
which the interpretation is clear from the context, and $\mathbb{E}$
for the associated expectation.
We use $\mathbb{E}_{X}$ to denote expectation with respect to the
random variable $X$. However, if no confusion is possible we sometimes
drop the subscript.
For the nonnegative random variable $X$, a mixed-Poisson($X$) random
variable, $Y$, is defined by $\mathbb{P}(Y=k) = \mathbb{E}_X[\frac
{X^k}{k!} \mathrm{e}^{-X}]$, for $k \in\mathbb{Z}_+$.
We say that a random variable is $\Poisson(x)$ if it is Poisson
distributed with mean $x$ and $\mathcal{MP}(X)$ if it has a
mixed-Poisson($X$) distribution.
We use $\tilde{X}$ to denote the size-biased variant of the
nonnegative random variable $X$, so, provided $\mathbb{E}[X] \in
(0,\infty)$, for $x\geq0$ we have
%
%
\begin{equation}
\label{sibidi} \mathbb{P}(\tilde{X} \leq x) = \frac{\int_{y \in[0,x]} y  \mathbb
{P}(X \in\mathrm{d}y)}{\mathbb{E}[X]} =
\frac{\mathbb{E}[X\ind(X \leq
x)]}{\mathbb{E}[X]}.
\end{equation}
%
Here $\ind(\mathcal{A})$, is the indicator function of $\mathcal{A}$,
which is 1 if $\mathcal{A}$ holds and 0 otherwise.
Note that if $Y \sim\mathcal{MP}(X)$, then $\tilde{Y} \sim\mathcal
{MP}(\tilde{X}) +1$; 
in this situation we use the notation $\check{Y}$ to denote a random
variable with the same distribution as $\tilde{Y}-1$, so that if $Y
\sim\mathcal{MP}(X)$, then $\check{Y} \sim\mathcal{MP}(\tilde{X})$.
This implies that $\mathbb{E}[\check{Y}]=\mathbb{E}[\tilde{X}]$.
Let $X_n \Rightarrow X$ denote convergence in distribution.
By Theorem 7.2.19 of \cite{Grim92}, we know that if $X_n \Rightarrow X$,
then $\mathbb{E}[X_n\ind(X_n \leq x)] \to\mathbb{E}[X\ind(X \leq x)]$
for all points of continuity of $\mathbb{P}(X \leq x)$.
This implies that if $\mathbb{E}[X_n] \to\mathbb{E}[X]$ and $X_n
\Rightarrow X$, then $\tilde{X}_n \Rightarrow\tilde{X}$.

We also use the notation $f_X(s) = \mathbb{E}[s^X]$ ($s\in[0,1]$) for
the probability generating function of a $\mathbb{Z}_+$-valued random
variable $X$ and $\phi_X(\theta) = \mathbb{E}[\mathrm{e}^{-\theta X}]$
($\theta\geq0$) for the moment generating function of a real-valued
random variable $X$. Note that if $Y \sim\mathcal{MP}(X)$, then
$\mathbb{E}[Y]=\mathbb{E}[X]$ and $f_Y(s) = \phi_X(1-s)$. Lastly, for
any set $A$ we denote its cardinality by $|A|$.

\subsection{Random intersection graphs}
\label{RIGs}
We consider a variant of random intersection graphs \cite
{Brit08,Deij09,Karo99} constructed via a bipartite generalization of
Norros and Reittu's Poissonian random graph model \cite{Norr06}. Random
intersection graphs may be thought of as random graphs composed of
overlapping groups/cliques of individuals/vertices. We note that the
model introduced in \cite{Karo99} is more general than (the
equal-weight variant of) the model presented in this paper. 

We construct a sequence of random intersection graphs as follows.
Consider two infinite sets of vertices $V=(v_i, i \in\mathbb{N})$ and
$V'=(v'_j, j \in\mathbb{N})$. Fix a real number $\alpha>0$.
Assign independent and identically distributed (i.i.d.) weights $(A_i,
i \in\mathbb{N})$ to the vertices in $V$, all distributed as the
nonnegative random variable $A$ and, independently, i.i.d. weights
$(B_j, j \in\mathbb{N})$ to the vertices in $V'$, all distributed as
the nonnegative random variable $B$. Assume that
%
%
\begin{equation}
\label{weightcond} \mu= \mathbb{E}[A] = \alpha\mathbb{E}[B] \in(0, \infty).
\end{equation}
%
Define $L^{(n)} = \sum_{i=1}^n A_i$ and $ L'^{(n)} = \sum_{j=1}^{\lfloor\alpha n \rfloor} B_j$, though see Remark \ref{rRemark}
below. Let $(\Omega,\mathcal{F}, \nu)$ be the corresponding probability
space, where $\Omega= (\mathbb{R}_+)^{\mathbb{N}} \times(\mathbb
{R}_+)^{\mathbb{N}}$ is the product space of nonnegative real-valued
infinite sequences $(A_i, i \in\mathbb{N})$ and $(B_j, j \in\mathbb
{N})$. The $\sigma$-field $\mathcal{F}$ is generated by the finite
dimensional cylinders on $\Omega$, and $\nu$ is the appropriate
(product) measure determined by the distributions of $A$ and $B$.
We note that, by the strong law of large numbers, both $L^{(n)}/(\mu n)
\asarrows1$ and $L'^{(n)}/(\mu n) \asarrows1$ as $n \to\infty$. Here
$\asarrows$ denotes almost sure convergence with respect to the measure
$\nu$. 

For given $\omega\in\Omega$, an auxiliary sequence of random
undirected multigraphs $(\mathbb{A}^{(n)}, n \in\mathbb{N}) = (\mathbb
{A}^{(n)}(\omega), n \in\mathbb{N})$ is constructed as follows. For
each $n$, the vertex set of $\mathbb{A}^{(n)}$ consists of $V^{(n)}=
(v_i, 1 \leq i \leq n)$ and $V'^{(n)} = (v'_j, 1 \leq j \leq\lfloor
\alpha n \rfloor)$.
Vertices $v_i \in V^{(n)}$ and $v'_j \in V'^{(n)}$ share a $\Poisson
(A_i B_j/(\mu n))$ number of edges (see Remark \ref{simpleA'}).
Conditioned on the weights of vertices, that is on $\omega$, the
numbers of edges between distinct pairs of vertices are independent,
and there is no edge in $\mathbb{A}^{(n)}$ connecting vertices either
both in $V^{(n)}$ or both in $V'^{(n)}$. Note that in $\mathbb{A}^{(n)}$,
the degree of vertex $v_i \in V^{(n)}$ is $\Poisson(A^{(n)}_i)$ with
%
%
\begin{equation}
\label{intermedA} A^{(n)}_i = A_i
L'^{(n)}/(\mu n) \asarrows A_i \qquad\mbox{as $n \to
\infty$,}
\end{equation}
while the degree of vertex $v'_j \in V'^{(n)}$ is $\Poisson(B^{(n)}_j)$ with
%
%
\begin{equation}
\label{intermedB} B_j^{(n)}=B_j
L^{(n)}/(\mu n) \asarrows B_j \qquad\mbox{as $n \to\infty$.}
\end{equation}
The random variables $A^{(n)}$ and $B^{(n)}$ are defined by
%
%
\begin{eqnarray}
\mathbb{P}\bigl(A^{(n)}\leq x\bigr) & =& n^{-1}\bigl|\bigl\{1 \leq
i \leq n \dvtx A^{(n)}_i \leq x\bigr\}\bigr| \qquad(x\geq0) \quad\mbox{and}
\label{AnDef}
\\
\mathbb{P}\bigl(B^{(n)}\leq x\bigr) & =& \lfloor\alpha n
\rfloor^{-1}\bigl|\bigl\{1 \leq j \leq\lfloor\alpha n \rfloor\dvtx
B^{(n)}_j \leq x\bigr\}\bigr|\qquad(x\geq0). \label{BnDef}
\end{eqnarray}
Thus $A^{(n)}(\omega)$ and $B^{(n)}(\omega)$ are random variables with
the empirical distribution
of the rescaled weights $\{A_i^{(n)}\}$ and $\{B_j^{(n)}\}$,
respectively. By the strong law of large numbers, $A^{(n)} \Rightarrow
A$ and $B^{(n)} \Rightarrow B$ as $n \to\infty$.



For the purpose of this paper it is not important how the graphs in the
sequence depend on each other. For simplicity we assume that,
conditioned on $\omega= (A_i, i \in\mathbb{N})\times(B_j, j \in
\mathbb{N})$, the graphs $(\mathbb{A}^{(n)}, n \in\mathbb{N})$ are independent.

\begin{figure}

\includegraphics{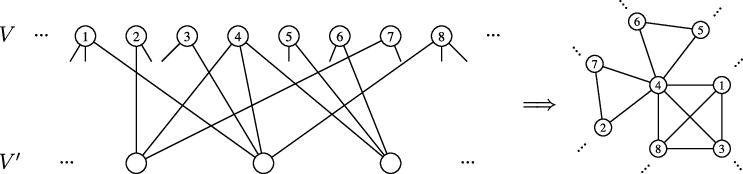}

\caption{Construction of $G^{(n)}$ from $\mathbb{A}^{(n)}$.}
\label{figrain}
\end{figure}

The vertices of the random intersection graph $G^{(n)}$ are precisely
those in $V^{(n)}$. Two (distinct) vertices share an edge in $G^{(n)}$
if and only if there is at least one path of length 2 between them in
$\mathbb{A}^{(n)}$. Thus, $G^{(n)}$ is a simple graph. This
construction is visualised in Figure~\ref{figrain}.
We note that $G^{(n)}$ is slightly different from an ordinary random
intersection graph. In \cite{Brit08,Deij09} the conditional probability
that vertices with weights $A_i$ and $B_j$ share an edge in $\mathbb
{A}^{(n)}$ is given by $\min(1,A_iB_j/(\mu n))$, as opposed to $1-\exp
[-A_iB_j/(\mu n)]$ in this paper. (Note also that in \cite{Brit08} the
weights are constant.)

\begin{remark}
\label{simpleA'}
Of course it is possible to construct a simple version of the (multi)
graph $\mathbb{A}^{(n)}$ directly, in which the vertices $v_i$ and
$v'_j$ share an edge with probability $1-\exp[-A_iB_j/(\mu n)]$.
Indeed, this is sufficient to describe the population structure of our
model. We use the present construction, where $v_i$ and $v_j$ share a
Poisson distributed number of edges, in order to have the machinery
ready for branching process approximations.
\end{remark}

\begin{remark}
The graph $G^{(n)}$ is a graph of overlapping cliques, in which,
asymptotically as $n\to\infty$, the number of cliques a vertex is part
of has an $\mathcal{MP}(A)$ distribution, and the clique sizes have an
$\mathcal{MP}(B)$ distribution. Both of these distributions have finite
mean by assumption.
\end{remark}

\begin{remark} 
\label{rRemark}
Since the random intersection graph does not change if, for some $r \in
(0,\infty)$, the random variables $A$ and $B$ are replaced by $rA$ and
$B/r$, condition (\ref{weightcond}) might be replaced by $\mathbb
{E}[A]<\infty$ and $\mathbb{E}[B] < \infty$, but this does not gain any
generality. The linear scaling $|V'^{(n)}| = \lfloor\alpha|V^{(n)}|
\rfloor$ is assumed in order to guarantee that, as $n\to\infty$, (i)
clique sizes do not grow to infinity, and (ii) two (or more) cliques
contain at most one common vertex, with high probability.\vadjust{\goodbreak}
\end{remark}

\begin{remark}
\label{remark2.2}
In this paper we make use of the following equivalent way of
constructing $\mathbb{A}^{(n)}$.
Initially all vertices are \emph{unexplored}. Pick a vertex from
$V^{(n)}$ according to some law (e.g., uniformly at random), say
vertex $v_i$, which has weight~$A_i$; this vertex becomes \emph
{active}. Assign a $\Poisson(A^{(n)}_i)$ number of edges to it
[see \eqref{intermedA}]. The end-vertices in $V'^{(n)}$ of these edges
are chosen independently with replacement and the probability that
$v'_j$ is chosen is $B_j/L'^{(n)}$. After this vertex $v_i$ is made
\emph{explored}, while the chosen vertices become active.

Now, if there are any, explore the active vertices from $V'^{(n)}$ one
by one. Suppose that we explore vertex $v'_j$, which has weight $B_j$;
then assign a $\Poisson(B_j^{(n)})$ number of edges to it. [We observe
that the number of edges between $v_j'$ and a previously unexplored
vertex $v_l$ is indeed $\Poisson(A_l B_j/(\mu n))$, independent of the
numbers of edges between all other pairs of vertices, as desired.]
These edges connect to vertices chosen independently, with replacement,
from $V^{(n)}$; vertex $v_l$ being chosen with probability
$A_l/L^{(n)}$. If the end vertex has already been explored, then the
edge is ignored and not added to the graph; otherwise it is added and
the end vertex in $V^{(n)}$ becomes active. If all the edges from
$v'_j$ are drawn, then $v'_j$ is made explored.

The next step is to pick one of the active vertices from $V^{(n)}$, if
there are any, according to some, for now unspecified, law and explore
it. Say that we choose $v_k$, which has weight $A_k$. Then we proceed
as in the first step. We assign a $\Poisson(A^{(n)}_k)$ number of edges
to it. Then the end-vertices in $V'^{(n)}$ of these edges are chosen
independently with replacement, and the probability that $v'_j$ is
chosen is $B_j/L'^{(n)}$. If the end vertex has been explored before,
then the edge is ignored and deleted. After this, vertex $v_k$ is made
explored and the newly chosen vertices in $V'^{(n)}$ which are
unexplored become active. We now explore all active vertices in
$V'^{(n)}$ in turn, and so on until there is no active vertex left.
After that an unexplored vertex from $V^{(n)}$ is chosen, and the
process goes on until all vertices in $V^{(n)}$ are explored. Note that
if after this construction 
there are unexplored vertices left in $V'^{(n)}$, they will have degree
0, since there is no end-vertex left in $V^{(n)}$ to connect to.
\end{remark}

\subsection{SIR epidemics}
\label{SIRepidemics}

We consider a stochastic SIR epidemic on the random intersection graph
$G^{(n)}$.
The vertices of the graph correspond to individuals and the edges to
relationships/possible contacts.
We assume that initially there is one infectious individual/vertex,
chosen uniformly at random from the population, while all other
individuals are susceptible. Every individual, independently of other
individuals, makes (directed) contact with each of its neighbours in
$G^{(n)}$ at the points of independent Poisson processes of unit
intensity. If an infectious individual contacts a susceptible one, the
susceptible becomes infectious. Infectious individuals stay infectious
for a random infectious period, distributed as a random variable~$\mathcal{I}$,
after which the infectious individual recovers and plays
no further part in the epidemic. Infectious periods are i.i.d. and
independent of the Poisson processes generating the contacts. An
infectious contact is a contact by an infectious individual,
irrespective of the state of the contacted individual. Note that there
is no loss of generality in assuming that the intensity of the Poisson
processes governing the contacts is 1, since this can always be
achieved by rescaling time. We denote the above epidemic model by
$\mathcal{E}^{(n)}(A,B,\mathcal{I})$.

For ease of exposition, primarily to avoid multitype branching
processes that are reducible, we assume that $\mathbb{P}(\mathcal
{I}=0)=0$. We omit the details, but our results are readily extended to
the case $\mathbb{P}(\mathcal{I}=0)>0$. Note, however, that we do allow
for the possibility that $\mathbb{P}(\mathcal{I}=\infty)>0$; if an
infectious individual has infinite infectious period, then that individual
almost surely makes infectious contact with every member of
each clique it belongs to.

In order to study properties of the epidemic on a graph, $G$ say, we
introduce the \textit{epidemic generated graph}, which is a directed
graph constructed as follows. If $G$ is undirected, then make it
directed by replacing every edge by two edges connecting the same
vertices but in opposite directions. Assign every vertex $i$ in $G$ an
independent realisation, $x_i$, of the random variable $\mathcal{I}$.
Now thin $G$ by deleting, independently, each (directed) edge emanating
from vertex $i$ with probability $\mathrm{e}^{-x_i}$. Thus an edge
starting at $v_i$ is deleted if infection would not pass along it were
$v_i$ to become infected during the epidemic. The set of vertices that
can be reached in the epidemic generated graph from an initially
infectious vertex $v_0$ (including $v_0$ itself) is distributed as the
set of ultimately recovered individuals. The set of vertices from which
there is a path in the epidemic generated graph to vertex $v_0$,
including $v_0$ itself, is said to be the \emph{susceptibility set} of
$v_0$ \cite{Ball00,Ball02}. If one of the vertices in the
susceptibility set of $v_0$ is the initially infectious individual,
then $v_0$ will be ultimately recovered in the epidemic.

\section{Main results and heuristics}
\label{Mainresults}

\subsection{Introduction}
\label{mainintro}

In this section we outline the main results of the paper, together with
their heuristic explanations. In Section~\ref{earlystages}, we explain
how the early stages of an SIR epidemic on a random intersection graph
may be approximated by a (forward) branching process, yielding a
threshold parameter $R_*$ [see \eqref{R*def}] for the epidemic and the
approximate probability that such an epidemic becomes established when
the population size $n$ is large. Unless the infectious period $\mathcal
{I}$ is constant, this branching process is multitype, its type space
being the support of $\mathcal{I}$ and hence in general uncountable.
This infinite type branching process is studied separately in
Section~\ref{forwardBP}. In Section~\ref{finaloutcome}, we show how the
susceptibility set of an individual may be approximated by a (backward)
branching process, which is single-type even if $\mathcal{I}$ is not constant.
Furthermore, we explain why, if $n$ is large, the proportion of the
population that is infected during an epidemic that becomes
established is approximately the probability that the backward
branching process avoids extinction. The above approximations are made
fully rigorous by considering SIR epidemics on a sequence of random
intersection graphs, indexed by the population size $n$, and proving
associated limit theorems. These theorems are stated in Section~\ref{SIRRIG} and proved in Section~\ref{proofs}. Calculation of extinction
probabilities for the forward and backward branching processes requires
exact results concerning the final outcome and susceptibility sets for
standard SIR epidemics in closed homogeneously mixing populations,
which are given in Appendix \ref{FSRV}.

\subsection{Early stages of an epidemic}
\label{earlystages}

\subsubsection{Fixed infectious period}
\label{earlyStagesFixedIP}
Consider the epidemic model $\mathcal{E}^{(n)}(A,B,\mathcal{I})$
defined in Section~\ref{SIRepidemics} and, for simplicity, suppose
first that the infectious period is constant, that is, there exists
$\iota>0$ such that $\mathbb{P}(\mathcal{I}=\iota)=1$.
In the limit as the population size $n \to\infty$, the initial
infective, $i^*$ say, belongs to $X\sim\mathcal{MP}(A)$ cliques, having
sizes $\check{Y}_1+1,\check{Y}_2+1,\ldots,\check{Y}_X+1$, where, given
$X$, the random variables $\check{Y}_1,\check{Y}_2,\ldots,\check{Y}_X$
are mutually independent
and $(\check{Y}_i|X)\sim\mathcal{MP}(\tilde{B})$ $(i=1,2,\ldots,X)$.
The size biasing comes in because the probability of being part of a
clique is proportional to its weight. 
Moreover, apart from $i^*$, these cliques are almost surely disjoint as
$n \to\infty$. The initial infective will trigger a local
(within-clique) epidemic in each of the $X$ cliques it belongs to. The
group of initial susceptibles in a single clique that are infected
through a local epidemic started by $i^*$ is called a \emph{litter} of
$i^*$. (Note that a litter may be empty; this happens if no susceptible in the
corresponding clique is infected.) Let $T(m)$ denote the size of a
typical litter, not counting the initial infective $i^*$, given that
the clique has size $m+1$. [We call $T(m)$ the size of a local epidemic
or the size of a litter.] Then the total number of individuals infected
(excluding $i^*$) by the local epidemics in the cliques that $i^*$
belongs to is distributed as
\[
C^f=\sum_{i=1}^X T(
\check{Y}_i),
\]
where $T(\check{Y}_1),T(\check{Y}_2),\ldots,T(\check{Y}_X)$ are
independent, since the infectious period is constant.

Now consider a typical individual, $j^*$ say, that is part of one of
the litters of~$i^*$. 
In the limit as $n \to\infty$, (i) individual $j^*$ belongs to $\check
{X}\sim\mathcal{MP}(\tilde{A})$ cliques, in addition to the clique
$j^*$ was infected through (i.e., the one also containing~$i^*$),
having sizes distributed independently as $\mathcal{MP}(\tilde{B})+1$
and (ii) apart from $j^*$, the $\check{X}+1$ cliques containing $j^*$
are disjoint. (The size biasing here
arises because, in the construction of $G^{(n)}$, the probability that
a vertex joins a given clique is proportional to the weight of that
vertex; see Remark \ref{remark2.2}.) Individual $j^*$ will trigger a
local epidemic in each of the $\check{X}$ ``new'' cliques it belongs to.
The total number of individuals infected (excluding $j^*$) in these
$\check{X}$ local epidemics (the sum of the sizes of the litters of
$j^*$) is distributed as
\[
\tilde{C}^f=\sum_{i=1}^{\check{X}} T(
\check{Y}_i),
\]
where, given $\check{X}$, the random variables $T(\check{Y}_1),T(\check
{Y}_2),\ldots,T(\check{Y}_{\check{X}})$ are independent.

The construction of the epidemic process may be continued in the
obvious fashion. It follows that, if the population size $n$ is large,
the number of infected individuals in the early stages of the epidemic
process may be approximated by a (Galton--Watson) branching process,
with one ancestor, and offspring distribution distributed as
$C^f$ in the initial generation and as $\tilde{C}^f$ in all subsequent
generations. This approximation is made precise by using a coupling
argument in Section~\ref{pextinctionthm}. The coupling between the
epidemic and branching processes breaks down when a clique used to
spread a local epidemic intersects a previously used clique, which,
with probability tending to one as $n\to\infty$, happens if and only if
the branching process does not go extinct.

Let
%
%
\begin{equation}
\label{R*def} R_*=\mathbb{E}\bigl[\tilde{C}^f\bigr]=
\mathbb{E}_{\check{Y}}\bigl[\mathbb{E}\bigl[T(\check {Y})\mid\check{Y}\bigr]
\bigr]\mathbb{E}[\check{X}] =\mathbb{E}_{\check{Y}}\bigl[\mathbb{E}\bigl[T(
\check{Y})\mid\check{Y}\bigr]\bigr]\mathbb {E}[\tilde{A}],
\end{equation}
and, for $s \in[0,1]$, let
\[
f_{C^f}(s)=\mathbb{E}\bigl[s^{C^f}\bigr]=f_X\bigl(
\mathbb{E}_{\check{Y}}\bigl[f_{T(\check
{Y})|\check{Y}}(s)\bigr]\bigr)
\]
and
\[
f_{\tilde{C}^f}(s)=\mathbb{E}\bigl[s^{\tilde{C}^f}\bigr]=f_{\check{X}}
\bigl(\mathbb {E}_{\check{Y}}\bigl[f_{T(\check{Y})|\check{Y}}(s)\bigr]\bigr).
\]
Let $\rho$ be the survival probability of the above branching process
(i.e., the probability that it does not go extinct).
Then, by standard branching process
theory (Theorem 2.3.1 of \cite{Jage75}), if $R_*\le1$, then $\rho=0$, and if $R_*>1$, then
%
%
\begin{equation}
\label{rhoconst} \rho=1-f_{C^f}(\sigma),
\end{equation}
where $\sigma$ is the unique solution in $[0,1)$ of the equation
%
%
\begin{equation}
\label{extinctionprobconst} f_{\tilde{C}^f}(s)=s.
\end{equation}
The coupling of the epidemic and branching processes mentioned above
implies that, if the population size $n$ is suitably large, $R_*$ is a
threshold parameter for the epidemic process and
the probability that an epidemic initiated by a single infective
becomes established and leads to a major outbreak is given
approximately by~$\rho$. Note that in \cite{Brit08}, the notation $R_0$
is used instead of $R_*$. We use the notation of~\cite{Ball09,Ball10}
because $R_0$ is usually defined as the expected number of new \emph
{direct} infections caused by an infectious individual in the first
stages of an epidemic \cite{Ande00,Diek00,Pell12},
while in \eqref{R*def} \textit{all} individuals infected by a local
epidemic are ``assigned to'' the initial infectious individual in the clique.

\subsubsection{General infectious period distribution}
\label{earlyStagesGeneralIP}

When the infectious period is not constant we can still approximate the
epidemic $\mathcal{E}^{(n)}(A,B,\mathcal{I})$ by considering successive
local epidemics as above, but the approximating process is no longer a
simple single-type branching process. There are two reasons for this.
First, the sizes of the litters of an individual, $i^*$ say,
are not independent since the infectious period of the initial
infective in the corresponding cliques is the same (i.e., the
infectious period of $i^*$). Secondly, the infectious periods of
infectives in a litter are not independent of the size of that litter.
These difficulties may be overcome by considering a multitype branching
process, in which individuals are typed by the length of their
infectious period. If the infectious period $\mathcal{I}$ has finite
support, then standard finite-type branching process theory (see,
e.g., Chapter~4 of \cite{Jage75}) may be used, so we now assume that
$\mathcal{I}$ has infinite (possibly uncountable) support. For ease of
exposition we assume that $\mathcal{I}$ has support $(0,\infty]$.

In view of these observations, we approximate the early stages of the
epidemic $\mathcal{E}^{(n)}(A,B,\mathcal{I})$ by a multitype branching process
\[
\mathcal{Z}^f = \mathcal{Z}^f(A,B,\mathcal{I}) = \bigl(
\mathcal{Z}^f_i, i \in\mathbb{Z}_+\bigr),\vadjust{\goodbreak}
\]
defined as follows. The type space is $(0,\infty]$, with the type of an
individual being given by the infectious period of the corresponding
individual in the epidemic process. For $i \in\mathbb{Z}_+$, $\mathcal
{Z}^f_i$ is a multiset of points in $(0,\infty]$ giving the types of
individuals present in generation $i$ of the branching process. (Note
that if the distribution of $\mathcal{I}$ has atoms, at infinity or
otherwise, then $\mathcal{Z}^f_i$ may contain repeated elements; on the
other hand if the distribution of $\mathcal{I}$ is continuous, then,
almost surely, all elements of $\mathcal{Z}^f_i$ are distinct, and
hence $\mathcal{Z}^f_i$ is a set.)
There is one ancestor, corresponding to the initial infective,
$i^*$ say, in the epidemic $\mathcal{E}^{(n)}(A,B,\mathcal{I})$, and
its type is distributed as $\mathcal{I}$. As in the constant infectious
period case, $i^*$ belongs to $X\sim\mathcal{MP}(A)$ cliques, having sizes
distributed independently as $\check{Y}+1$, where $\check{Y}\sim\mathcal
{MP}(\tilde{B})$, and the offspring of the
ancestor in $\mathcal{Z}^f$ corresponds 
to all the individuals infected in the local epidemics triggered by
$i^*$ in these $X$ cliques, though now of course we also keep track of
their types (infectious periods). In the branching process, a group of
children corresponding to a litter in the epidemic process is also
referred to as a litter. 
The offspring of any individuals in a noninitial generation of
$\mathcal{Z}^f$ are defined in a similar fashion, except $X$ is
replaced by $\check{X}\sim\mathcal{MP}(\tilde{A})$. Of course, the
offspring of distinct individuals in $\mathcal{Z}^f$ are mutually independent.

The branching process $\mathcal{Z}^f$, which we call a forward
branching process because it approximates the forward spread of the
epidemic $\mathcal{E}^{(n)}(A,B,\mathcal{I})$, is analysed in
Section~\ref{forwardBP}.
Let $\tilde{\mathcal{Z}}^f$ be the multitype branching process defined
analogously to $\mathcal{Z}^f$, except the offspring distribution in
all generations of $\tilde{\mathcal{Z}}^f$ is that of the noninitial
generations in $\mathcal{Z}^f$. Let $\rho$ be the probability that
$\mathcal{Z}^f$ survives and, for $x\in(0,\infty]$, let $\tilde{\rho
}(x)$ be the probability that $\tilde{\mathcal{Z}}^f$ survives given
that the ancestor has type $x$. Let $R_*$ be defined as in \eqref
{R*def}, where $T(m)$ is distributed as 
the size of a local epidemic, initiated by a single infective in a
clique of size $m+1$, in which the infectious periods of infectives
(including the initial one) are i.i.d. copies of $\mathcal{I}$. (An
expression for $\mathbb{E}_{\check{Y}}[\mathbb{E}[T(\check{Y})\mid\check
{Y}]]$ is given by equation \eqref{elocal} in Appendix \ref{application},
thus enabling $R_*$ to be computed.) Then $\rho>0$ if
and only if $R_*>1$ (see Theorem \ref{survandR0}), so $R_*$ is still a
threshold parameter for the epidemic. Also, when $R_*>1$, $\rho$ is
given by an infinite-type analogue of \eqref{rhoconst}; see \eqref
{survive}, which expresses $\rho$ as the expectation of a functional of
$\tilde{\rho}$ with respect to the distribution $\mathcal{I}$ of $x$.
Furthermore, $\tilde{\rho}$ satisfies a functional equation [see \eqref
{survivetilde}], which is essentially an infinite-type analogue
of~\eqref{extinctionprobconst} and has at most one nonzero solution (see
Lemma \ref{unilem}).

\subsection{Final outcome of an epidemic}
\label{finaloutcome}

Recall the definition of the susceptibility set of an individual given
in Section~\ref{SIRepidemics}. We require also the concept of a \emph
{local susceptibility set}, 
which is defined in exactly the same way as a susceptibility set but
for an epidemic on a single clique. For $m=0,1,\ldots,$ let $S(m)$
denote the size of a typical local susceptibility set of an individual
amongst the $m$ other individuals
in a clique of size $m+1$.

We may approximate the early growth of a susceptibility set of an
individual, $i^*$ say,\vadjust{\goodbreak} by a branching process in much the same way as
for the early stages of an epidemic. We consider first those
individuals, not including $i^*$ itself, who belong to a local
susceptibility set of $i^*$. These are the offspring of $i^*$ in the
branching process. We next repeat this process for each individual,
$j^*$ say, in the first generation of the branching process to
obtain the second generation, and so on. (When determining the
offspring of $j^*$, we need only consider its local susceptibility set
in cliques other than that which contains $i^*$; any individual
in $j^*$'s local susceptibility set who is in that clique has already
been counted as part of $i^*$'s local susceptibility set.) In the limit
as $n\to\infty$, this leads to a (backward) branching process
\[
\mathcal{Z}^b = \mathcal{Z}^b(A,B,\mathcal{I}) = \bigl(
\mathcal{Z}^b_i, i \in\mathbb{Z}_+\bigr)
\]
having one ancestor, in which the number of offspring of the
ancestor is distributed as
\[
C^b=\sum_{i=1}^X S(
\check{Y}_i),
\]
and the number of offspring of any subsequent individual is distributed as
\[
\tilde{C}^b=\sum_{i=1}^{\check{X}} S(
\check{Y}_i),
\]
where $X, \check{X}, \check{Y}_1,\check{Y}_2,\ldots $ are independent,
$X\sim\mathcal{MP}(A)$, $\check{X}\sim\mathcal{MP}(\tilde{A})$ and
$\check{Y}_i\sim\mathcal{MP}(\tilde{B})$ $(i=1,2,\ldots)$.

Note that the local susceptibility set of an individual is independent
of its infectious period, so $\mathcal{Z}^b$ is a single-type branching
process; in contrast to $\mathcal{Z}^f$, which is
single-type only if $\mathcal{I}$ is almost surely equal to a fixed constant.

Let
%
%
\begin{equation}
\label{R*bdef} R_*^b=\mathbb{E}\bigl[\tilde{C}^b\bigr]=
\mathbb{E}_{\check{Y}}\bigl[\mathbb{E}\bigl[S(\check {Y})\mid\check{Y}\bigr]
\bigr]\mathbb{E}[\tilde{A}]
\end{equation}
be the mean number of children of an individual in $\mathcal{Z}^b$ who
is not the ancestor
and, for $s \in[0,1]$, define the probability generating functions
\[
f_{C^b}(s)=\mathbb{E}\bigl[s^{C^b}\bigr]=f_X\bigl(
\mathbb{E}_{\check{Y}}\bigl[f_{S(\check
{Y})|\check{Y}}(s)\bigr]\bigr)
\]
and
\[
f_{\tilde{C}^b}(s)=\mathbb{E}\bigl[s^{\tilde{C}^b}\bigr]=f_{\check{X}}
\bigl(\mathbb {E}_{\check{Y}}\bigl[f_{S(\check{Y})|\check{Y}}(s)\bigr]\bigr).
\]
Denote by $\rho^b=\rho^b(A,B,\mathcal{I})$ the survival probability of
$\mathcal{Z}^b$. Then, by standard branching process theory, if $R_*^b
\le1$, then $\rho^b=0$, and if $R_*^b > 1$, then
%
%
\begin{equation}
\label{rhob} \rho^b=1-f_{C^b}(\xi),\vadjust{\goodbreak}
\end{equation}
where $\xi$ is the unique solution in $[0,1)$ of the equation
\[
f_{\tilde{C}^b}(s)=s.
\]
Note that an expression for $\mathbb{E}_{\check{Y}}[f_{S(\check
{Y})|\check{Y}}(s)]$ is given by equation \eqref{suslocal} in
Appendix~\ref{application}, which enables $\rho^b$ to be computed. In
connection with this computation, also recall that $f_X(s) = \phi
_A(1-s)$ and observe that $f_{\check{X}}(s)=\phi_{\tilde{A}}(1-s)=-\phi
'_A(1-s)/\mathbb{E}[A]$, where $\phi'_A$ is the derivative of $\phi_A$.

Before describing how the backward branching process $\mathcal{Z}^b$ is
used to study the final outcome of an epidemic in a large population,
we discuss briefly the relationship between the forward and backward
branching processes. In particular we note two important consequences
of this relationship.

\begin{remark}
Let $G'$ be the epidemic generated graph (see Section~\ref{SIRepidemics}) for an epidemic on a single clique ($G$ say) of $m+1$
individuals, labelled $0,1,\ldots,m$. For distinct $i,j \in\{0,1,\ldots
,m\}$, let $\chi_{i,j}=1$ if there is a chain of directed edges from
$i$ to $j$ in $G'$ and let $\chi_{i,j}=0$ otherwise. Then $T(m)$ and
$S(m)$ are distributed as $\sum_{i=1}^m \chi_{0,i}$ and $\sum_{i=1}^m
\chi_{i,0}$, respectively, so by symmetry, $\mathbb{E}[T(m)]=m\mathbb
{P}(\chi_{0,1}=1)$ and $\mathbb{E}[S(m)]=m\mathbb{P}(\chi_{1,0}=1)$.
Further, by symmetry, $\mathbb{P}(\chi_{0,1}=1)=\mathbb{P}(\chi
_{1,0}=1)$, and it follows from \eqref{R*def} and \eqref{R*bdef} that
$R_*^b=R_*$. Thus we use only the notation~$R_*$.
\end{remark}

\begin{remark}
Consider the graphs $G$ and $G'$ of the previous remark, and suppose
that the infectious period $\mathcal{I}$ is constant, say $\mathbb
{P}(\mathcal{I}=\iota)=1$. Then $G'$ is obtained from the directed
version of $G$ by deleting directed edges independently, each with
probability $\mathrm{e}^{-\iota}$. Thus, if $G''$ is obtained from $G'$ by
reversing the direction of all arrows, then $G''$ and $G'$ are
identically distributed, whence so are $T(m)$ and $S(m)$. It follows
that in this case $\rho^b=\rho$. This argument breaks down when
$\mathcal{I}$ is not constant. In that case, apart from the branching
process $\mathcal{Z}^f$ being multitype, the presence/absence of
directed edges from a given vertex in $G'$ are not independent, whence
$T(m)$ and $S(m)$ have different distributions. Thus generally $\rho^b
\ne\rho$.
\end{remark}

Now we describe informally the relationship between the backward
branching process and the final outcome of an epidemic. (This
description assumes that there is no vertex with weight greater than
$\log n$; the full argument is given in Section~\ref{pfinalsizethm}.)
Consider the epidemic model $\mathcal{E}^{(n)}(A,B,\mathcal{I})$, and
suppose that the population size $n$ is large. Choose an initially
susceptible individual uniformly at random from all initial
susceptibles, $j$ say, and construct its susceptibility set on a
generation basis as described above for $\mathcal{Z}^b$. Stop this
construction after $t_n=\lceil\log\log n\rceil$ generations or when the
susceptibility set process goes extinct, whichever occurs first. The
susceptibility set process can be coupled to the backward branching
process $\mathcal{Z}^b$ so that, with probability tending to $1$ as $n
\to\infty$, the two coincide over generations $0,1,\ldots,t_n$ and
their common size at generation $t_n$ is not greater than $n^\varepsilon$
for any $\varepsilon>0$ (cf. the start of the proof of Lemma \ref{sizesus}). Also, if $R_*>1$, there exists $c>0$ such that the
probability that $Z^b_{t_n}>(\log n)^c$ tends to $\rho^b$ as $n \to
\infty$ (cf. Lemma \ref{sizesus}).

By symmetry, the initial infective in $\mathcal{E}^{(n)}(A,B,\mathcal
{I})$, $i$ say, may be chosen by picking an individual uniformly at
random from the population excluding $j$. Thus, if $j$'s susceptibility
set process goes extinct before reaching generation $t_n$ then the
probability that $j$'s susceptibility set contains the initial
infective (and hence that $j$ is infected during the epidemic)
tends to zero as $n \to\infty$. Suppose instead that $j$'s
susceptibility set process does reach generation $t_n$. Then we choose
the initial infective $i$ as above, construct the forward epidemic
process from $i$ and determine whether or not the latter intersects the
[at least $(\log n)^c$ and at most $n^\varepsilon$] individuals in
generation $t_n$ of $j$'s partially constructed susceptibility set. If
it does then $j$ is infected during the epidemic, otherwise $j$
remains uninfected.

Recall that the forward epidemic process originating from $i$ is
approximated by the branching process $\mathcal{Z}^f$. If $\mathcal
{Z}^f$ goes extinct, then in the limit as $n \to\infty$, there are
only finitely many individuals infected in the epidemic and hence the
probability that the epidemic intersects generation $t_n$ of $j$'s
partially constructed susceptibility set tends to zero. If $\mathcal
{Z}^f$ does not go extinct, then by exploiting a lower bounding
branching process for the epidemic process, we show in Section~\ref{pfinalsizethm} that, as $n \to\infty$, the epidemic process almost
surely infects $\Theta(n)$ individuals and hence the probability that
it intersects generation $t_n$ of $j$'s partially constructed
susceptibility set tends to one.

The above implies that the asymptotic probability that an initial
susceptible, chosen uniformly at random, is infected during a
major outbreak is $\rho^b$. Hence the asymptotic expected proportion of
the population infected during a major outbreak is also $\rho
^b$. Now consider two distinct initial susceptibles chosen uniformly at
random, $j_1$ and $j_2$ say, and construct their susceptibility sets on
a generation basis as above, stopping each process after $t_n$
generations or if it goes extinct. 
The two partially constructed susceptibility set processes are
asymptotically independent as $n \to\infty$, which enables a weak law
of large numbers to be proved for the proportion of the population that
is infected during a major outbreak.

\subsection{Limit theorems for SIR epidemics on random intersection
graphs}\label{SIRRIG}

Let $\mathcal{R}^{(n)}=\mathcal{R}^{(n)}(A,B,\mathcal{I})$ be the set
of ultimately recovered vertices, including the single initial
infective, in the SIR epidemic $\mathcal{E}^{(n)}(A,B,\mathcal{I})$ on
the random intersection graph $G^{(n)}$, constructed using the
infectious period distribution $\mathcal{I}$ and the sequences $(A_i, i
\in\mathbb{N})$, $(B_j, j \in\mathbb{N})$ (as described in
Section~\ref{RIGs}). Our focus is on the properties of $|\mathcal
{R}^{(n)}|$, the number of ultimately recovered individuals in the
epidemic. For a branching process, $\mathcal{Z}^f$ say, let $|\mathcal
{Z}^f| = \sum_{i=0}^\infty|\mathcal{Z}^f_i|$ denote its total size
(total progeny), including the ancestor.
Recall that $\mathcal{Z}^f=\mathcal{Z}^f(A,B,\mathcal{I})$ and $\mathcal
{Z}^b=\mathcal{Z}^b(A,B,\mathcal{I})$ are the (forward and backward)
branching processes, which approximate the epidemic process and the
process exploring a susceptibility set, respectively. Recall also that
$\rho$ and $\rho^b$ are their respective survival probabilities. 

Our first theorem establishes the precise sense in which the forward
process approximates the early stages of an epidemic.

\begin{theorem}\label{extinctionthmu}
For all $k \in\mathbb{N}$,
\[
\lim_{n\to\infty} \mathbb{P}\bigl(\bigl|\mathcal{R}^{(n)}\bigr|=k
\bigr) = \mathbb {P}\bigl(\bigl|\mathcal{Z}^f\bigr|=k\bigr).
\]
\end{theorem}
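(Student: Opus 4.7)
The plan is to construct a coupling between the breadth-first exploration of the epidemic on $G^{(n)}$ and the forward branching process $\mathcal{Z}^f$ that agrees for any fixed total size $k$ with probability tending to one as $n \to \infty$.

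Following Remark~\ref{remark2.2}, I would explore the bipartite multigraph $\mathbb{A}^{(n)}$ in breadth-first order driven by the epidemic dynamics. The initial infective $i^*$ is chosen uniformly in $V^{(n)}$; its $\Poisson(A^{(n)}_{i^*})$ half-edges to $V'^{(n)}$ are drawn with endpoint probabilities $B_j/L'^{(n)}$, each revealed clique $v'_j$ produces its other members via $\Poisson(B^{(n)}_j)$ half-edges into $V^{(n)}$ with endpoint probabilities $A_l/L^{(n)}$, and one runs the local SIR epidemic in each clique using i.i.d.\ infectious periods distributed as $\mathcal{I}$ to determine the litter and the types of the newly infected; iterate on each new infective. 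Simultaneously build a coupled tree process $\tilde{\mathcal{Z}}^{(n)}$ by the same recipe but drawing every vertex and clique label from an auxiliary infinite i.i.d.\ copy of the weight sequence, so that every label is fresh. Conditional on no ``collision'' (a clique in $V'^{(n)}$ drawn twice, or a vertex in $V^{(n)}$ drawn twice, during the exploration) the two constructions are identical, whence $|\mathcal{R}^{(n)}| = |\tilde{\mathcal{Z}}^{(n)}|$; in any case $|\mathcal{R}^{(n)}| \le |\tilde{\mathcal{Z}}^{(n)}|$.

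The quantitative heart of the proof is showing, for each fixed $k$, that the collision probability within the first $k$ exploration steps tends to $0$. The total number of individual and clique picks in these steps is a random variable $N_k$, finite almost surely, so for any $\epsilon > 0$ there exists $M = M(k,\epsilon)$ with $\mathbb{P}(N_k > M) < \epsilon$. On $\{N_k \le M\}$, after truncating $A$ and $B$ at a large level $M'$ (whose contribution outside vanishes as $M' \to \infty$ by $\mathbb{E}[A], \mathbb{E}[B] < \infty$) and using $L^{(n)}/n \asarrow \mu$ and $L'^{(n)}/(\alpha n) \asarrow \mu/\alpha$, the per-pick collision probability is $O(M'/n)$, so a union bound over the $O(M^2)$ ordered pairs of picks gives a total collision probability of $O(M^2 M' / n) \to 0$. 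Sending $n \to \infty$, then $M' \to \infty$, then $\epsilon \to 0$, the collision probability vanishes, yielding $\mathbb{P}(|\mathcal{R}^{(n)}| = k, |\tilde{\mathcal{Z}}^{(n)}| \ne k) = o(1)$.

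To close the argument I would verify the distributional convergence $\mathbb{P}(|\tilde{\mathcal{Z}}^{(n)}| = k) \to \mathbb{P}(|\mathcal{Z}^f| = k)$. The initial vertex has weight satisfying $A^{(n)} \Rightarrow A$; non-initial vertices, being picked with probability proportional to their weight, have weight distributed as $\tilde{A}^{(n)} \Rightarrow \tilde{A}$ (using $A^{(n)} \Rightarrow A$ together with $\mathbb{E}[A^{(n)}] = L^{(n)}/n \asarrow \mu = \mathbb{E}[A]$ and the implication noted at the end of Section~\ref{epRIG}); analogously $B^{(n)} \Rightarrow B$ and $\tilde{B}^{(n)} \Rightarrow \tilde{B}$. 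Continuity of the mixed-Poisson construction in its parameter then gives convergence of the generation-wise offspring laws in $\tilde{\mathcal{Z}}^{(n)}$ to those of $\mathcal{Z}^f$; since $\{|\mathcal{Z}^f| = k\}$ depends only on finitely many generations, this yields $\mathbb{P}(|\tilde{\mathcal{Z}}^{(n)}| = k) \to \mathbb{P}(|\mathcal{Z}^f| = k)$. The main obstacle is balancing the collision bound (which prefers bounded weights) against the annealed offspring law (which involves integrating the tails of $A$ and $B$), handled by the truncation argument above.
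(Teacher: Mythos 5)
Your proposal follows essentially the same route as the paper: the paper likewise couples the epidemic exploration with an intermediate branching process $\mathcal{Z}^{(n)}=\mathcal{Z}^f(A^{(n)},B^{(n)},\mathcal{I})$ built from the empirical weight distributions, kills the coupling at the first repeated vertex or clique via a birthday-problem bound with weights truncated at $\log n$ (your fixed level $M'$ plays the same role), and passes to the limit using $A^{(n)}\Rightarrow A$, $B^{(n)}\Rightarrow B$ and their size-biased versions, the only presentational difference being that the paper first proves a quenched version (Theorem~\ref{extinctionthm}, conditional on the weight sequences, which it needs later) and then integrates over $\omega$. One caution: your step ``$N_k$ is finite almost surely, so $\mathbb{P}(N_k>M)<\epsilon$'' requires uniformity in $n$, and since litters may be empty the number of clique picks is not bounded by any function of $k$ alone; the paper devotes a careful argument to precisely this point (the events (i)--(iv) in the proof of Lemma~\ref{bplimits}, exploiting $\mathbb{P}(\mathcal{I}=0)=0$ and the positive probability of cliques of size at least $2$), though in your setup the needed uniform tightness does follow from the distributional convergence of the offspring components that you invoke in your final paragraph, so the outline is sound.
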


The next result establishes the connection between the backward process
and the proportion of individuals ultimately recovered.
%
\begin{theorem}\label{finalsizethmu}
Suppose that $R_*>1$. Then for every $0 < \varepsilon< \rho^b$,
\[
\lim_{n\to\infty} \mathbb{P} \biggl( \biggl\llvert \frac{|\mathcal{R}^{(n)}|}{n}
- \rho^b\biggr\rrvert < \varepsilon \biggr) = \rho.
\]
\end{theorem}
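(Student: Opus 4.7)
\textit{Proof plan.} The plan is a moment method. Write $|\mathcal{R}^{(n)}| = \sum_{j=1}^n I_j$ with $I_j = \ind(j \in \mathcal{R}^{(n)})$ and recall from Section~\ref{SIRepidemics} that $j \in \mathcal{R}^{(n)}$ if and only if the initial infective $i^*$ lies in the susceptibility set $\mathcal{S}_j^*$ (which we take to include $j$). Since the vertex weights are i.i.d.\ and $i^*$ is chosen uniformly, vertices are exchangeable, so
\begin{equation*}
\mathbb{E}\!\left[\frac{|\mathcal{R}^{(n)}|}{n}\right] = \mathbb{P}(1 \in \mathcal{R}^{(n)}), \qquad \mathbb{E}\!\left[\left(\frac{|\mathcal{R}^{(n)}|}{n}\right)^{\!2}\right] = \frac{\mathbb{P}(1 \in \mathcal{R}^{(n)})}{n} + \frac{n-1}{n}\mathbb{P}(1,2 \in \mathcal{R}^{(n)}),
\end{equation*}
and it is enough to show $\mathbb{P}(1\in\mathcal{R}^{(n)}) \to \rho\rho^b$ and $\mathbb{P}(1,2\in\mathcal{R}^{(n)}) \to \rho(\rho^b)^2$.

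For the first limit I would run in parallel a breadth-first backward exploration of $\mathcal{S}_1^*$ and a forward exploration of $\mathcal{R}^{(n)}$ from $i^*$, coupling each (as in Sections~\ref{earlystages}--\ref{finaloutcome}) with $\mathcal{Z}^b$ and $\mathcal{Z}^f$ respectively. With $M_n = \lfloor \log n \rfloor$, and using the same coupling-breakdown bound that underlies Theorem~\ref{extinctionthmu}, both couplings remain faithful until the corresponding exploration either reaches size $M_n$ or dies. Hence $\mathbb{P}(A_1^n) \to \rho^b$ and $\mathbb{P}(B^n) \to \rho$, where $A_1^n$ and $B^n$ are the events that the backward and forward explorations respectively reach size $M_n$. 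Because each exploration has touched only $O(M_n) = o(\sqrt{n})$ individuals and cliques, the two explorations sample essentially disjoint parts of the weight sequences, so $A_1^n$ and $B^n$ are asymptotically independent and $\mathbb{P}(A_1^n \cap B^n) \to \rho^b\rho$. The crucial step is the \emph{intersection lemma}: on $A_1^n \cap B^n$, the event $\{1 \in F_{i^*}\} = \{i^* \in \mathcal{S}_1^*\}$ holds with probability tending to $1$. To prove it, freeze the first $M_n$ vertices $\mathcal{S}_1^{*,M_n}$ revealed by the backward exploration and continue the forward exploration past $M_n$; using a lower-bounding supercritical bipartite (individuals/cliques) branching process defined on the still-unexplored graph fragment, one argues that $|F_{i^*}| \ge cn$ for some $c>0$ with probability $\to 1$ on $B^n$, after which the probability that $F_{i^*}$ misses the frozen set of size $M_n$ is at most $\bigl(1 - \Theta(M_n/n)\bigr)^{cn}=o(1)$. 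Combining these gives $\mathbb{P}(1\in\mathcal{R}^{(n)}) \to \rho\rho^b$.

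The same programme with three coupled explorations (backward from $1$, backward from $2$, forward from $i^*$) against two independent copies of $\mathcal{Z}^b$ and one of $\mathcal{Z}^f$ yields $\mathbb{P}(1,2\in\mathcal{R}^{(n)}) \to (\rho^b)^2\rho$ and hence $\mathbb{E}[(|\mathcal{R}^{(n)}|/n)^2] \to \rho(\rho^b)^2$. Set $E_n = \{|\mathcal{R}^{(n)}| > M_n\}$. Theorem~\ref{extinctionthmu} combined with the almost sure dichotomy for $\mathcal{Z}^f$ (extinction or divergence) and the $\Theta(n)$-growth step above applied to $\mathcal{R}^{(n)}$ itself gives $\mathbb{P}(E_n) \to \rho$, while $|\mathcal{R}^{(n)}|/n \le M_n/n$ on $E_n^c$. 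With $Y_n = |\mathcal{R}^{(n)}|/n - \rho^b\,\ind(E_n)$, direct expansion yields
\begin{equation*}
\mathbb{E}[Y_n^2] = \rho(\rho^b)^2 - 2\rho^b \cdot \rho\rho^b + (\rho^b)^2\rho + o(1) = o(1),
\end{equation*}
so $Y_n \to 0$ in $L^2$ and hence in probability. Splitting $\mathbb{P}(||\mathcal{R}^{(n)}|/n - \rho^b| < \epsilon)$ on $E_n$ and $E_n^c$, and using that $M_n/n$ eventually lies outside $(\rho^b - \epsilon, \rho^b + \epsilon)$ when $0 < \epsilon < \rho^b$, delivers the stated limit $\rho$.

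The main obstacle is the intersection lemma, and within it the $\Theta(n)$ lower bound on $|F_{i^*}|$ conditional on $B^n$. In the random intersection graph setting edges are mediated by cliques, so the lower-bounding branching process must live on the bipartite individuals-and-cliques construction; one must show that after the first $M_n$ steps of exploration have been revealed, the empirical distributions of the remaining individual- and clique-weights are still close in total variation to those of $A$ and $B$, so that the continued forward exploration dominates a supercritical two-type Galton--Watson process. This sprinkling-plus-domination argument on the bipartite construction is the technical heart of the proof and is where the random intersection graph structure, as opposed to a configuration-model analogue, must be handled explicitly.
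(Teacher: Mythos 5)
Your proposal is correct and takes essentially the same route as the paper's own proof: couple backward explorations of the susceptibility sets of one (and then two) uniformly chosen vertices, stopped at polylogarithmic size, and the forward epidemic to $\mathcal{Z}^b$ and $\mathcal{Z}^f$ respectively; establish the small-or-$\Theta(n)$ dichotomy for the forward process by dominating a pruned branching process on the bipartite structure (the paper removes high-weight vertices/cliques via the thresholds $R^{(n)}(\gamma)$, $R'^{(n)}(\gamma)$ and short infectious periods via $\mathcal{I}(c_1)$, exactly your "sprinkling-plus-domination" step); glue the $\Theta(n)$ epidemic to the frozen susceptibility-set frontier; and finish with a second-moment argument using the asymptotic independence of the two backward explorations, your unconditional-moment bookkeeping with $Y_n$ being an equivalent rearrangement of the paper's conditional moments given $\bar{\mathcal{W}}'^{(n)}(c_1)>\eta n$. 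The one point you gloss — that the survival probability of the pruned lower-bounding process can be made arbitrarily close to $\rho$, which both your intersection lemma (conditional on $B^n$) and $\mathbb{P}(E_n)\to\rho$ require — is precisely what the paper's Lemmas~\ref{survfin} and~\ref{killprocess} provide, via monotone approximation of $\mathcal{I}$ and the uniqueness of the non-zero solution of the infinite-type functional equation (Lemma~\ref{unilem}), so "dominates a supercritical process" must be upgraded to "dominates a process whose survival probability exceeds $\rho-\epsilon$" for the argument to close.
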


Theorems \ref{extinctionthmu} and \ref{finalsizethmu} are proved in
Sections~\ref{pextinctionthm} and \ref{pfinalsizethm}, respectively.
Finally, we use these two results to establish the following
convergence in distribution of the proportion of individuals ultimately
recovered in the epidemic process.

\begin{theorem}\label{finalsizecdthm}
Let $T_F$ be a random variable with $\mathbb{P}(T_F=\rho^b)=\rho
=1-\mathbb{P}(T_F=0)$. Then, as $n \to\infty$,
\[
n^{-1}\bigl|\mathcal{R}^{(n)}\bigr| \Rightarrow T_F.
\]
\end{theorem}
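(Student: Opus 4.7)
The plan is to derive Theorem~\ref{finalsizecdthm} by combining Theorem~\ref{extinctionthmu} (which controls the distribution of $|\mathcal{R}^{(n)}|$ on finite scales) with Theorem~\ref{finalsizethmu} (which localises $n^{-1}|\mathcal{R}^{(n)}|$ near $\rho^b$ on a positive-probability set). Since $T_F$ is supported on $\{0,\rho^b\}$, weak convergence will follow from computing the limits of $\mathbb{P}(n^{-1}|\mathcal{R}^{(n)}|\le x)$ for each continuity point $x\in\mathbb{R}\setminus\{0,\rho^b\}$ of the limit distribution.

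First I would dispose of the trivial ranges: for $x<0$ the probability is $0$, and for $x\ge 1$ it equals $1$, so the interesting range is $0<x<\rho^b$ and $\rho^b<x<1$. Fix such an $x$ and pick $\epsilon>0$ small enough that the intervals $(0,x)$ and $(\rho^b-\epsilon,\rho^b+\epsilon)$ are disjoint (take $\epsilon<|\rho^b-x|$). Then introduce the two events
\begin{equation*}
E_1^{(n)}(K) = \{|\mathcal{R}^{(n)}|\le K\}, \qquad E_2^{(n)}(\epsilon) = \{\,|n^{-1}|\mathcal{R}^{(n)}|-\rho^b|<\epsilon\,\},
\end{equation*}
which are disjoint once $n$ is so large that $K/n<\rho^b-\epsilon$. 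By Theorem~\ref{extinctionthmu},
\begin{equation*}
\lim_{n\to\infty}\mathbb{P}(E_1^{(n)}(K)) = \mathbb{P}(|\mathcal{Z}^f|\le K),
\end{equation*}
and by Theorem~\ref{finalsizethmu}, $\lim_{n\to\infty}\mathbb{P}(E_2^{(n)}(\epsilon))=\rho$. Letting $K\to\infty$, the right-hand side above increases to $\mathbb{P}(|\mathcal{Z}^f|<\infty)=1-\rho$. Thus for any $\delta>0$ we can fix $K=K(\delta)$ so that $\liminf_{n\to\infty}\mathbb{P}(E_1^{(n)}(K)\cup E_2^{(n)}(\epsilon))\ge 1-\delta$, which is the key step: it shows that, asymptotically, almost all the mass of $n^{-1}|\mathcal{R}^{(n)}|$ lives in $\{0\}\cup(\rho^b-\epsilon,\rho^b+\epsilon)$, so no probability mass leaks into the "medium outbreak" region $(K/n,\rho^b-\epsilon]$ nor above $\rho^b+\epsilon$.

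With this decomposition, the two non-trivial cases are handled as follows. For $\rho^b<x<1$, pick $\epsilon<x-\rho^b$; then both $E_1^{(n)}(K)$ and $E_2^{(n)}(\epsilon)$ are contained in $\{n^{-1}|\mathcal{R}^{(n)}|\le x\}$ for all large $n$, so $\liminf_n\mathbb{P}(n^{-1}|\mathcal{R}^{(n)}|\le x)\ge 1-\delta$ for every $\delta>0$, giving the limit $1=\mathbb{P}(T_F\le x)$. For $0<x<\rho^b$, pick $\epsilon<\rho^b-x$; the lower bound $\liminf_n\mathbb{P}(n^{-1}|\mathcal{R}^{(n)}|\le x)\ge 1-\rho-\delta$ comes from $E_1^{(n)}(K)\subseteq\{n^{-1}|\mathcal{R}^{(n)}|\le x\}$ for large $n$, while the upper bound comes from the inclusion $\{n^{-1}|\mathcal{R}^{(n)}|\le x\}\subseteq (E_2^{(n)}(\epsilon))^c$, whose probability tends to $1-\rho$ by Theorem~\ref{finalsizethmu}. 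Letting $\delta\downarrow 0$ gives the limit $1-\rho=\mathbb{P}(T_F\le x)$.

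No step here is technically deep, since the two input theorems do the real work. The only mild subtlety is the upper bound for $x>\rho^b$: Theorem~\ref{finalsizethmu} by itself only says that the complement of $E_2^{(n)}(\epsilon)$ has asymptotic probability $1-\rho$, and does not a priori exclude that this complementary mass sits in the range $(\rho^b+\epsilon,1]$ rather than near $0$. Ruling this out is exactly the role of Theorem~\ref{extinctionthmu}: together with the tightness argument embodied in letting $K\to\infty$ in $\mathbb{P}(|\mathcal{Z}^f|\le K)\to 1-\rho$, it forces the non-major-outbreak mass to concentrate near $0$. This is the only place where both theorems must be used in combination, and is the main (and only) obstacle to a clean proof.
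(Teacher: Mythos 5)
Your proposal is correct and follows essentially the same route as the paper: both arguments obtain the limiting distribution by combining Theorem~\ref{extinctionthmu} (letting the cutoff $K\to\infty$ so that $\mathbb{P}(|\mathcal{Z}^f|\le K)\uparrow 1-\rho$, forcing the non-major-outbreak mass to concentrate near $0$) with Theorem~\ref{finalsizethmu} (mass $\rho$ in $(\rho^b-\epsilon,\rho^b+\epsilon)$), your version merely spelling this out at every continuity point of the CDF rather than just at small $\epsilon$. The one point to tidy is the subcritical case $\rho^b=0$, where your appeal to Theorem~\ref{finalsizethmu} with $0<\epsilon<|\rho^b-x|$ is vacuous since the theorem requires $0<\epsilon<\rho^b$; but there $\rho=0$ as well, so $E_1^{(n)}(K)$ alone carries all the mass, which is precisely the case split ($R_*\le 1$ versus $R_*>1$) the paper makes explicitly.
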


\begin{pf}
First note that Theorem \ref{extinctionthmu} implies that, for any
$\varepsilon> 0$ and any $k \in\mathbb{N}$,
\[
\liminf_{n \to\infty}\mathbb{P} \bigl(n^{-1} \bigl|
\mathcal{R}^{(n)}\bigr| \le \varepsilon \bigr) \ge\mathbb{P}\bigl(\bigl|
\mathcal{Z}^f\bigr| \le k\bigr),
\]
whence, letting $k \to\infty$,
%
%
\begin{equation}
\label{limpleneps1} \liminf_{n \to\infty}\mathbb{P} \bigl(n^{-1}
\bigl|\mathcal{R}^{(n)}\bigr| \le \varepsilon \bigr) \ge1-\rho.
\end{equation}

Suppose that $R_* \le1$. Then $\rho=0$ and \eqref{limpleneps1} implies that
%
%
\begin{equation}
\label{subfinalsizecd1} n^{-1}\bigl|\mathcal{R}^{(n)}\bigr| \Rightarrow0\qquad \mbox{as
$n \to\infty$}.
\end{equation}
On the other hand, suppose that $R_* > 1$, so $\rho>0$ and $\rho^b>0$.
Then Theorem~\ref{finalsizethmu} implies that, for $0 < \varepsilon< \rho
^b$, $\limsup_{n \to\infty}\mathbb{P} (n^{-1}|\mathcal{R}^{(n)}|
\le\varepsilon ) \le1-\rho$,
which, together with \eqref{limpleneps1}, yields that, for such
$\varepsilon$,
\[
\lim_{n \to\infty}\mathbb{P} \bigl(n^{-1}\bigl|
\mathcal{R}^{(n)}\bigr| \le \varepsilon \bigr) =1-\rho.
\]
The theorem then follows upon combining this observation with \eqref
{subfinalsizecd1} and Theorem \ref{finalsizethmu}.
\end{pf}

\section{Properties of the forward branching process}
\label{forwardBP}
In this section we study the survival probability of the branching
process $\mathcal{Z}^f$ introduced in Section~\ref{earlystages}. Recall
that individuals in $\mathcal{Z}^f$ are typed by the length of the
infectious period of the corresponding individual in the epidemic process.
There is one ancestor, $i^*$ say, whose type is distributed as $\mathcal
{I}$ and who belongs to $X\sim\mathcal{MP}(A)$ cliques. [I.e., the
corresponding individual in the epidemic process $\mathcal
{E}^{(n)}(A,B,\mathcal{I})$ belongs to $X\sim\mathcal{MP}(A)$ cliques.]
Those cliques have sizes that are independent and identically
distributed as $1+\check{Y}$, where $\check{Y}\sim\mathcal{MP}(\tilde
{B})$. The offspring of the ancestor correspond to the individuals who,
in the corresponding epidemic process, are infected by the local
epidemics triggered by $i^*$ in the $X$ cliques it belongs to. The
offspring of $i^*$ are grouped into litters, with each litter
corresponding to a clique of~$i^*$. Note that some litters might be
empty (if the epidemic fails to spread further into some cliques to
which $i^*$ belongs).
The offspring of any subsequent individual is defined similarly, except
that such an individual belongs to $\check{X}\sim\mathcal{MP}(\tilde
{A})$ cliques in addition to the clique it was infected through.
The type space for $\mathcal{Z}^f$ is given by the support of $\mathcal
{I}$, which for ease of exposition we assume is $(0,\infty]$. Extension
to cases where $\mathcal{I}$ is supported on a proper subset of
$(0,\infty]$ is straightforward.

We investigate the survival probability of $\mathcal{Z}^f$ using
functionals defined on measurable test functions $h\dvtx (0,\infty] \to
[0,1]$ as follows (cf. \cite{Boll07,Boll10}). Let $h(x)$ be a given
test function. Suppose that individuals in $\mathcal{Z}^f$ are marked
independently, with an individual of type $x$ being marked with
probability $h(x)$. Let $F(h)(x)$ be the probability that an ancestor
of type $x$ has at least one marked child in a given litter and let
$\Phi(h)(x)$ be the probability that an ancestor of type $x$ has at
least one marked child. Recall that the probability generating function
of $X$ is given by $f_X(s)=\phi_A(1-s)$ ($s \in[0,1]$), where $\phi
_A(\theta) = \mathbb{E}[\mathrm{e}^{-\theta A}]$ is the moment generating
function of $A$. It follows that
\[
\Phi(h) (x)=1-\phi_A\bigl(F(h) (x)\bigr).
\]
Define the functional $\tilde{\Phi}(h)(x)$ similarly for the branching
process $\tilde{\mathcal{Z}}^f$, defined in the final paragraph of
Section~\ref{earlyStagesGeneralIP}; thus
\[
\tilde{\Phi}(h) (x)=1-\phi_{\tilde{A}}\bigl(F(h) (x)\bigr).
\]

Let $\rho_i$ be the probability that generation $i$ of the branching
process $\mathcal{Z}^f$ is nonempty, that is $\rho_i= \mathbb
{P}(|\mathcal{Z}^f_i| > 0)$. By definition $\rho_i$ is nonincreasing,
so $\rho=\lim_{i \to\infty} \rho_i$ exists and is the probability of
survival of the branching process.
Let $\tilde{\rho}_i(x)$ be the probability that the lineage of an
individual (i.e. the sub-process consisting of that individual and all
its descendants), which is not the ancestor and has type $x$, survives
for at least $i$ further generations and let $\tilde{\rho}(x) = \lim_{i\to\infty} \tilde{\rho}_i(x)$ be the probability that this lineage
survives forever. Note that $\tilde{\rho}_1(x) = \tilde{\Phi}(\mathbf
{1})(x)$, where $\mathbf{1}$ is the function which is equal to 1 on its
entire domain. It is clear that $\tilde{\rho}(x)$ satisfies
%
%
\begin{equation}
\label{survivetilde} \tilde{\rho}(x) = \tilde{\Phi}(\tilde{\rho}) (x),
\end{equation}
since in order for the lineage of an individual to survive, at least
one of the children of that individual must have a surviving lineage.
Furthermore,
%
%
\begin{equation}
\label{survive} \rho= \int_{(0,\infty]} \Phi(\tilde{\rho}) (x)
\mathbb{P}(\mathcal {I}\in\mathrm{d}x)= \mathbb{E}\bigl[\Phi(\tilde{\rho}) (
\mathcal{I})\bigr].
\end{equation}
Let $\tilde{\Phi}_i$ be the $i$th iterate of $\tilde{\Phi}$ and note
that $\tilde{\rho}_i(x)=\tilde{\Phi}_i(\mathbf{1})(x)$.
The functionals $\Phi(h)(x)$ and $\tilde{\Phi}(h)(x)$ are monotonic
increasing in $h$ (e.g., if $h_1(x) \ge h_2(x)$ for all $x \in(0,\infty
]$, then $\Phi(h_1)(x) \ge\Phi(h_2)(x)$ for all $x \in(0,\infty]$).
Therefore, $\tilde{\rho}(x) = \lim_{i \to\infty} \tilde{\Phi}_i(\mathbf
{1})(x)$ is the pointwise maximal solution of \eqref{survivetilde}.
Note that, since $\mathcal{Z}^f$ is irreducible, either $\tilde{\rho
}(x)=0$ for all $x\in(0,\infty]$ or $\tilde{\rho}(x)>0$ for all $x\in
(0,\infty]$. The following lemma is proved and discussed in Appendix
\ref{appendix1}.

\begin{lemma}\label{unilem}
There is at most one nonzero solution $\tilde{\rho}(x)$ of \eqref
{survivetilde}.
\end{lemma}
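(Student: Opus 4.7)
My plan is to show that any non-zero fixed point $h$ of $\tilde\Phi$ coincides with $\tilde\rho$, which immediately yields uniqueness. The bound $h\le\tilde\rho$ is already for free from the discussion preceding the lemma: monotonicity of $\tilde\Phi$ gives $h=\tilde\Phi_n(h)\le\tilde\Phi_n(\mathbf{1})\to\tilde\rho$. The task is therefore to establish the reverse inequality. As a preliminary observation, a non-zero $h$ must be strictly positive everywhere: if $h(x_0)=0$, then $1-\phi_{\tilde A}(F(h)(x_0))=0$ forces $F(h)(x_0)=0$, so $h$ vanishes on the offspring distribution of a type-$x_0$ individual, and irreducibility of $\mathcal{Z}^f$ propagates this to all of $(0,\infty]$.

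For the reverse inequality, I would use a martingale argument. Run $\tilde{\mathcal{Z}}^f$ starting from an ancestor of type $x$ and, conditionally on the process, independently mark each individual, a type-$\tau$ individual receiving a mark with probability $h(\tau)$. Letting $\mathcal{F}_n$ denote the $\sigma$-field generated by the first $n$ generations (before marks), set
\begin{equation*}
M_n=\prod_{j\in\tilde{\mathcal{Z}}^f_n}\bigl(1-h(\tau_j)\bigr),
\end{equation*}
the conditional probability that no individual in generation $n$ is marked. Using conditional independence of the litters from distinct parents, together with the fixed-point identity $h(\tau)=\tilde\Phi(h)(\tau)=\mathbb{P}(\text{some child of a type-}\tau\text{ parent is marked})$, a short computation shows $\mathbb{E}[M_{n+1}\mid\mathcal{F}_n]=M_n$. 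Thus $(M_n)$ is a bounded $[0,1]$-valued martingale with $M_0=1-h(x)$, converging almost surely and in $L^1$ to some $M_\infty$ with $\mathbb{E}[M_\infty]=1-h(x)$.

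The decisive step is to identify $M_\infty=\mathbf{1}_{\{\text{extinction}\}}$ almost surely. On the extinction event $M_n$ is eventually an empty product equal to $1$, so $M_\infty=1$ there. On the survival event one needs $M_\infty=0$, equivalently $\sum_{j\in\tilde{\mathcal{Z}}^f_n}h(\tau_j)\to\infty$ a.s. Once this is in hand, $\mathbb{E}[M_\infty]=\mathbb{P}(\text{extinction}\mid\text{ancestor type }x)=1-\tilde\rho(x)$, and matching against $\mathbb{E}[M_\infty]=1-h(x)$ delivers $h=\tilde\rho$.

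The main obstacle will be precisely this last identification on the continuum type space $(0,\infty]$, where the classical Perron--Frobenius / Kesten--Stigum machinery for finite-type supercritical branching processes does not apply directly. I would handle it by truncation: choose a compact subinterval $[a,b]\subset(0,\infty)$ on which the restriction of the mean-offspring operator is still supercritical whenever $\tilde\rho\not\equiv 0$, and construct a finite-type sub-branching-process of $\tilde{\mathcal{Z}}^f$ by admitting only individuals whose type lies in $[a,b]$ and by restricting litters accordingly. Standard finite-type theory then yields that, on an event of probability $\tilde\rho(x)$, this subprocess survives and produces infinitely many type-$[a,b]$ individuals across generations; strict positivity of $h$ on $[a,b]$ (hence a uniform lower bound by continuity/measurability arguments, perhaps after further truncation) then forces $\sum h(\tau_j)\to\infty$ and thus $M_\infty=0$ on survival, completing the proof.
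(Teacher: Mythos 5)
Your martingale reduction is sound, and it is a genuinely different route from the paper's: for any fixed point $h$ of~\eqref{survivetilde}, independence of the offspring of distinct individuals plus the identity $h(\tau)=\tilde{\Phi}(h)(\tau)$ does make $M_n=\prod_{j}\bigl(1-h(\tau_j)\bigr)$ (product over generation $n$ of $\tilde{\mathcal{Z}}^f$) a bounded martingale, your positivity step is fine (since $\tilde{A}>0$ almost surely, $\phi_{\tilde{A}}(\theta)=1$ forces $\theta=0$ in~\eqref{mostimpeq2}), and $h\le\tilde{\rho}$ together with $\mathbb{E}[M_\infty]=1-h(x)$ correctly reduces the lemma to showing $M_\infty=0$ almost surely on survival. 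The paper instead re-types litters by $(s,a)$ (number of remaining susceptibles, number directly contacted), obtaining a process $\hat{\mathcal{Z}}^f$ on the \emph{countable} type space $\hat{\mathcal{T}}$ on which offspring numbers blow up as $s,a\to\infty$, applies Riordan's uniqueness argument~\cite{Rior05} there, and transfers uniqueness back through the correspondence $\hat{h}(s,a)=1-A(s,a,h)$; that detour exists precisely because the identification you need is hard in the original type space.

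And indeed your sketch of that decisive step has genuine gaps. First, restricting types to a compact interval $[a,b]$ does \emph{not} give a finite-type process: $[a,b]$ is uncountable, so ``standard finite-type theory'' does not apply; to get finite type you must discretize the period as in the paper's proof of Theorem~\ref{survandR0} (the variable $\mathcal{I}'_\epsilon$), a device that preserves stochastic domination only because the offspring law is monotone in the parent's period. Secondly, $h$ is merely measurable, so pointwise positivity yields no uniform lower bound on $[a,b]$ and no continuity is available; this is repairable — a coupling in the ancestor's period shows $F(h)(x)$, hence any fixed point $1-\phi_{\tilde{A}}(F(h)(x))$, is non-decreasing in $x$ — but you would need to say so. Thirdly, and most seriously, in $\tilde{\mathcal{Z}}^f$ the children's types are \emph{not} independent copies of $\mathcal{I}$: conditional on a litter's composition the periods of its members are tilted (an infected individual who infected none of the remaining $s$ clique members carries a factor of order ${\rm e}^{-s\mathcal{I}}$), so on survival the realized types could a priori concentrate where $h$ is small, and ``infinitely many individuals with $h(\tau_j)$ bounded below'' requires a real argument. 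Even granting a supercritical finite-type subprocess, its survival probability may be strictly less than $\tilde{\rho}(x)$, so covering the \emph{whole} survival event needs an additional restart argument uniform over types — non-trivial exactly because reproduction degenerates as the type tends to $0$, the feature the paper cites as blocking a direct appeal to~\cite{Rior05}. Your route may be completable, but as written the crux is asserted rather than proved.
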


Now recall the definition of $R_*$ from \eqref{R*def}, where $\check
{Y}\sim\mathcal{MP}(\tilde{B})$, and as before let $T(m)$ denote the
size of a litter in a clique of $m$ initial susceptibles, in which the
infectious periods of infectives are i.i.d. copies of $\mathcal{I}$.
It is convenient here to show explicitly the dependence on $\mathcal
{I}$ and write $T(m)=T(m,\mathcal{I})$, so
\[
R_*=\mathbb{E}_{\check{Y}}\bigl[\mathbb{E}\bigl[T(\check{Y},\mathcal{I})\mid
\check {Y}\bigr]\bigr]\mathbb{E}[\tilde{A}].
\]

\begin{theorem}\label{survandR0}
The survival probability satisfies $\rho>0$ if and only if $R_*>1$.
\end{theorem}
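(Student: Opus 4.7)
The plan is to exploit a rank-one structural property of the linearised mean operator, which makes $R_*$ literally the spectral radius of the linearisation of $\tilde\Phi$ at zero. Define $(Lh)(x)$ to be the expected sum of $h$ over the types of the offspring of a type-$x$ individual in $\tilde{\mathcal{Z}}^f$. I claim
\[(Lh)(x) \;=\; m(x)\int h\,\mathrm{d}\mathcal{I},\qquad m(x):=\mathbb{E}[\tilde A]\,\mathbb{E}_{\check Y}\bigl[\mathbb{E}[T(\check Y,x)\mid \check Y]\bigr].\]
The key reason is that, in the Epidemic Generated Graph on a clique $\{0,1,\dots,\check Y\}$ with source $0$ of type $x$ and susceptibles of i.i.d.\ types $Y_1,\dots,Y_{\check Y}\sim \mathcal{I}$, the event $\{0\to k\}$ depends only on edges whose source is some $i\ne k$, and those edges have probabilities $1-e^{-Y_i}$ that do not involve $Y_k$. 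Hence $\ind(0\to k)$ and $Y_k$ are independent, so $\mathbb{E}[h(Y_k)\ind(0\to k)]=\mathbb{E}[h(\mathcal{I})]\mathbb{P}(0\to k)$; summing over $k\in\{1,\dots,\check Y\}$ and over the $\check X\sim\mathcal{MP}(\tilde A)$ cliques gives the displayed product form. Consequently $L$ has rank one with unique non-zero eigenvalue $R_*=\int m\,\mathrm{d}\mathcal{I}$.

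For $R_*\le 1\Rightarrow\rho=0$: Markov's inequality per clique combined with $1-\phi_{\tilde A}(u)\le \mathbb{E}[\tilde A]u$ yields $\tilde\Phi(h)\le Lh$ pointwise, with strict inequality whenever $(Lh)(x)>0$ (by strict convexity of the exponential). Applied to $h=\tilde\rho$ and integrated against $\mathcal{I}$ this gives $\int\tilde\rho\,\mathrm{d}\mathcal{I}\le R_*\int\tilde\rho\,\mathrm{d}\mathcal{I}$, strict when $\int\tilde\rho\,\mathrm{d}\mathcal{I}>0$. Hence $R_*\le 1$ forces $\int\tilde\rho\,\mathrm{d}\mathcal{I}=0$; the fixed-point identity $\tilde\rho(x)=1-\phi_{\tilde A}(F(\tilde\rho)(x))$ then collapses to $\tilde\rho\equiv 0$, so $\rho=\mathbb{E}[\Phi(\tilde\rho)(\mathcal{I})]=0$.

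For $R_*>1\Rightarrow\rho>0$: the strategy is to exhibit a small strictly positive subinvariant function for $\tilde\Phi$. Since $T(\check Y,x)\le\check Y$, the function $m$ is bounded by $\mathbb{E}[\tilde A]\mathbb{E}[\tilde B]$, which we may take to be finite (the case $R_*=\infty$ is handled by truncating $A$ and $B$). For $\epsilon>0$ small, a Taylor expansion yields
\[\tilde\Phi(\epsilon m)(x)\;=\;(L(\epsilon m))(x)\,+\,O(\epsilon^2)\;=\;\epsilon R_*\,m(x)\,+\,O(\epsilon^2),\]
which exceeds $\epsilon m(x)$ uniformly in $x$ once $\epsilon$ is sufficiently small. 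Monotonicity of $\tilde\Phi$ then gives $\tilde\Phi_i(\mathbf{1})\ge\tilde\Phi_i(\epsilon m)\ge\epsilon m$ for all $i$, and letting $i\to\infty$ yields $\tilde\rho\ge\epsilon m>0$ on $\{m>0\}$. By irreducibility of $\tilde{\mathcal{Z}}^f$ this extends to all of $(0,\infty]$, so $\rho>0$.

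The main obstacle is establishing the rank-one identity; at first sight one might expect the mean operator to be genuinely infinite-dimensional, because types along long infection chains do become biased relative to $\mathcal{I}$. The subtlety is that at a single step no such bias is present: only the out-edges of the would-be offspring depend on its own infectious period, and out-edges never contribute to an incoming path. Once the rank-one structure is in hand, the remainder is a routine monotone-iteration/Markov-inequality analysis of the functional equation $\tilde\rho=\tilde\Phi(\tilde\rho)$ supplied by Lemma~\ref{unilem}.
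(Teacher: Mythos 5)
Your rank-one identity is correct (the reduction to simple paths shows a path from $0$ to $k$ never uses an out-edge of $k$, so $\ind(0\to k)$ is independent of $Y_k$), and it is precisely the key insight of the paper's proof, where it appears as the observation that whether an individual in a clique is infected is independent of its own infectious period, making the rows of the mean offspring matrix proportional to the law of $\mathcal{I}$. Your subcritical half is essentially the paper's argument verbatim: bounding the probability of at least one marked child by the expected number of marked children, with strictness, and integrating against $\mathcal{I}$ to get $\mathbb{E}[\tilde{\rho}(\mathcal{I})] < R_*\,\mathbb{E}[\tilde{\rho}(\mathcal{I})]$, which forces $\tilde{\rho}\equiv 0$ when $R_*\le 1$. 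That half is fine.

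The supercritical half has a genuine gap. You claim $\tilde{\Phi}(\epsilon m)(x) = \epsilon R_* m(x) + O(\epsilon^2)$ "exceeds $\epsilon m(x)$ uniformly in $x$ once $\epsilon$ is sufficiently small", but $\inf_x m(x)=0$: as $x\downarrow 0$ the source of a local epidemic retains each out-edge with probability $1-{\rm e}^{-x}\to 0$, so $\mathbb{E}[T(\check{Y},x)]\le \mathbb{E}[\check{Y}\min(1,\check{Y}(1-{\rm e}^{-x}))]\to 0$ by dominated convergence. With an additive error that is only $O(\epsilon^2)$ uniformly in $x$, the inequality $\epsilon R_* m(x) - C\epsilon^2 > \epsilon m(x)$ fails on the set $\{x : m(x)\le C\epsilon/(R_*-1)\}$, which is a non-empty neighbourhood of $0$ (of positive $\mathcal{I}$-measure, since the type space is $(0,\infty]$ with full support) for every $\epsilon>0$. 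So $\epsilon m$ is not subinvariant and your monotone iteration never gets started. The repair requires a \emph{relative} error bound $\tilde{\Phi}(\epsilon m)(x) \ge \epsilon m(x)(R_* - C\epsilon)$, which in turn needs second-moment control uniform in $x$, e.g.\ $\mathbb{E}[T(\check{Y},x)^2]\le C\,\mathbb{E}[T(\check{Y},x)]$ (provable from $T\le \check{Y}\ind(\mbox{source infects someone})$ and $\mathbb{E}[T(\check{Y},x)]\ge(1-{\rm e}^{-x})\mathbb{E}[\check{Y}]$, but only if $\mathbb{E}[\check{Y}^3]<\infty$) together with $\mathbb{E}[\tilde{A}^2]<\infty$ in the expansion of $1-\phi_{\tilde{A}}$. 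Neither moment condition follows from the paper's hypotheses even when $R_*<\infty$ (only $\mathbb{E}[\tilde{A}],\mathbb{E}[\tilde{B}]<\infty$ do), so truncation of $A$ and $B$ is needed in \emph{every} supercritical case, not just $R_*=\infty$ as your parenthetical suggests — and you would still have to verify that truncation produces a stochastically dominated process whose $R_*$ remains above $1$. The paper sidesteps the $x\downarrow 0$ difficulty entirely by instead discretizing and truncating $\mathcal{I}$ itself, obtaining a finite-type minorant process whose rank-one mean matrix has Perron root equal to a truncated version of the $R_*$-expression exceeding $1$, and then invoking standard finite-type survival theory; types below the discretization scale simply contribute nothing to the minorant, so no uniform lower bound on $m$ near $0$ is ever needed.
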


\begin{pf}
Suppose first that $R_*>1$. For $k \in\mathbb{Z}_+$, let $L(k,\mathcal
{I})=\mathbb{E}[T(\check{Y},\mathcal{I})\mid\check{Y}=k]$. Then there
exists $K \in\mathbb{N}$ such that
%
%
\begin{equation}
\mathbb{E}[\tilde{A}] \sum_{k=0}^K L(k,
\mathcal{I}) \mathbb{P}(\check {Y}=k) >1. \label{survlem1}
\end{equation}

For $\varepsilon>0$, let $\mathcal{I}_{\varepsilon}$ be the discrete random
variable obtained from $\mathcal{I}$ by $\mathcal{I}_{\varepsilon}=
\varepsilon\lfloor\mathcal{I}/\varepsilon\rfloor$ (with the convention that
$\lfloor\infty\rfloor= \infty$) and note that $\mathcal{I}_{\varepsilon
}$ is stochastically smaller than $\mathcal{I}$.
Since $L(k,\mathcal{I})$ depends on the realisation of an epidemic
generated graph defined on a finite clique,
there exists $\varepsilon>0$ such that
\[
\mathbb{E}[\tilde{A}] \sum_{k=0}^K L(k,
\mathcal{I}_\varepsilon) \mathbb {P}(\check{Y}=k) >1.
\]
Analagously to the derivation of \eqref{survlem1}, there exists
$K'_{\varepsilon} \in\mathbb{N}$ such that for
$\mathcal{I}'_{\varepsilon}= \mathcal{I}_{\varepsilon} \ind(\mathcal
{I}_{\varepsilon} \notin(K'_{\varepsilon},\infty))$,
we have
%
%
\begin{equation}
\mathbb{E}[\tilde{A}] \sum_{k=0}^{K} L
\bigl(k,\mathcal{I}'_{\varepsilon}\bigr) \mathbb{P}(\check{Y}=k)
>1. \label{RstarIepsPrime}
\end{equation}

Consider the branching process $\tilde{\mathcal{Z}}^f(A,B,\mathcal
{I}'_{\varepsilon})$, which has finitely many types and is irreducible.
Let $\tilde{M}$ be the mean offspring matrix of $\tilde{\mathcal
{Z}}^f(A,B,\mathcal{I}'_{\varepsilon})$. Note that whether or not an
individual in a clique becomes infected is independent of that
individual's own infectious period. It follows that the rows of $\tilde
{M}$ are each proportional to the probability mass function of $\mathcal
{I}'_{\varepsilon}$, so $\tilde{M}$ has rank one and the maximal
eigenvalue of $\tilde{M}$ is given by its trace, which is easily seen
to be equal to the left-hand side of \eqref{RstarIepsPrime} with $K$
replaced by $\infty$. Therefore, if $R_*>1$, the branching process
$\tilde{\mathcal{Z}}^f(A,B,\mathcal{I})$ dominates the irreducible
finite-type supercritical branching process $\tilde{\mathcal
{Z}}^f(A,B,\mathcal{I}'_\varepsilon)$, which we know from standard
theory (Theorem~4.2.2 of \cite{Jage75}), has a strictly positive probability
of survival. Thus $\tilde{\rho}(x)>0$ for all $x\in(0,\infty]$;
equation \eqref{survive} then implies that $\rho>0$.

For $R_* \leq1$ we use a similar argument to \cite{Boll10}. Suppose
that $R_* \leq1$ and that $\tilde{\rho}(x)>0$ for some (and thus all)
$x\in(0,\infty]$. Recall that $\tilde{\Phi}(\tilde{\rho})(x)$ is the
probability that, in $\tilde{\mathcal{Z}}^f(A,B,\mathcal{I})$ and with
individuals of type $x$ being marked with probability $\tilde{\rho
}(x)$, an individual of type $x$ has at least one marked child. Note
that this probability is strictly smaller than the expectation of the
number, $T_M(x,\tilde{\rho})$ say, of marked children of such an
individual. Let $T(x,m,\mathcal{I})$ denote the size of a single-clique
epidemic with $m$ initial susceptibles and a single initial infective
which has infectious period $x$. Then, again exploiting the fact that
whether or not an individual is infected is independent of its
infectious period, we find that
\[
\mathbb{E}\bigl[T_M(x,\tilde{\rho})\bigr]=\mathbb{E}[\tilde{A}]
\mathbb{E}_{\check
{Y}}\bigl[\mathbb{E}\bigl[T(x,\check{Y},\mathcal{I})\mid
\check{Y}\bigr]\bigr]\mathbb{E}\bigl[\tilde {\rho}(\mathcal{I})\bigr],
\]
whence, recalling \eqref{survivetilde},
%
%
\begin{equation}
\label{tildephiub} \tilde{\rho}(x)=\tilde{\Phi}(\tilde{\rho}) (x)<\mathbb{E}[\tilde
{A}]\mathbb{E}_{\check{Y}}\bigl[\mathbb{E}\bigl[T(x,\check{Y},\mathcal{I})\mid
\check {Y}\bigr]\bigr]\mathbb{E}\bigl[\tilde{\rho}(\mathcal{I})\bigr].
\end{equation}
Note that if $x$ is a realisation of a random variable $\mathcal{I}_0$
that is distributed as~$\mathcal{I}$, then $\mathbb{E}[T(m,\mathcal
{I})]=\mathbb{E}_{\mathcal{I}_0}[\mathbb{E}[T(\mathcal{I}_0,m,\mathcal
{I})\mid\mathcal{I}_0]]$ and \eqref{tildephiub} implies that $\mathbb
{E}[\tilde{\rho}(\mathcal{I})] < R_* \mathbb{E}[\tilde{\rho}(\mathcal
{I})]$. It then follows that $R_*>1$, which is a contradiction. Thus,
if $R_* \leq1$ then $\tilde{\rho}(x)$ is identically zero on the
support of $\mathcal{I}$ and it then follows from \eqref{survive} that
$\rho=0$.
\end{pf}

\section{Proofs}
\label{proofs}
In this section we give formal proofs of Theorems \ref{extinctionthmu}
and \ref{finalsizethmu}. Recall the probability space $(\Omega,\mathcal
{F}, \nu)$ defined in Section~\ref{RIGs}, where $\Omega$ is the product
space of nonnegative real-valued infinite sequences $(A_i, i \in
\mathbb{N})$ and $(B_j, j \in\mathbb{N})$ and $\nu$ is the appropriate
(product) measure determined by the distributions of $A$ and $B$. In
the proofs we consider processes which depend on $\omega\in\Omega$,
that is, on the sequences $(A_i, i \in\mathbb{N})$ and $(B_i, i \in
\mathbb{N})$. The measure governing a process conditioned on $\omega$
is denoted by $\mathbb{P}_{\omega}$ and the corresponding expectation
by $\mathbb{E}_\omega$.
We use the notation $X_n \parrow X$ to denote that $X_n$ converges in
probability to $X$ as $n \to\infty$, with respect to the measure $\nu$.
That is, $X_n \parrow X$ means that for every $\varepsilon>0$, $\delta>0$,
we have $\nu(|X_n-X|> \varepsilon) < \delta$ for all sufficiently large $n
\in\mathbb{N}$. In particular, we often use the notation $\mathbb
{P}_{\omega}(X_n \in\mathcal{A}) \parrow\mathbb{P}(X \in\mathcal
{A})$, which is to be interpreted as meaning that, for a subset
$\mathcal{A}$ of the state space of $X_n$ and $X$, we have that for
every $\varepsilon>0$,
\[
\int_{\omega\in\Omega} \ind\bigl(\bigl|\mathbb{P}_{\omega}(X_n
\in\mathcal {A})-\mathbb{P}(X \in\mathcal{A})\bigr|>\varepsilon\bigr) \nu(\mathrm{d}
\omega) \to0 \qquad\mbox{as $n \to\infty$}.
\]
Additionally, we use the notation $X_n= O_{p_{\nu}}(g(n))$ if there
exists a constant $C>0$ such that
$\mathbb{P}_{\omega} (X_n < C g(n)) \parrow1$ and $X_n= \Theta_{p_{\nu
}}(g(n))$ if there exist constants $0<c<C$ such that
$\mathbb{P}_{\omega} (c g(n) < X_n < C g(n)) \parrow1$.

We prove the following conditioned versions of Theorems \ref{extinctionthmu} and \ref{finalsizethmu}, in which $\mathcal
{R}^{(n)}(\omega,\mathcal{I})$ denotes the set of ultimately recovered
vertices, including the single initial infective, in an SIR epidemic
(as defined in Section~\ref{SIRepidemics}) on the random intersection
graph $G^{(n)}$, constructed using the infectious period distribution
$\mathcal{I}$ and the sequences $(A_i, i \in\mathbb{N})$, $(B_j, j \in
\mathbb{N})$ denoted by $\omega\in\Omega$.

\begin{theorem}\label{extinctionthm}
For $k \in\mathbb{N}$, we have
\[
\mathbb{P}_{\omega}\bigl(\bigl|\mathcal{R}^{(n)}(\omega,
\mathcal{I})\bigr|=k\bigr) \parrow \mathbb{P}\bigl(\bigl|\mathcal{Z}^f(A,B,
\mathcal{I})\bigr|=k\bigr).
\]
\end{theorem}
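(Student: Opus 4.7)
The plan is to prove Theorem~\ref{extinctionthm} via a coupling argument, building both the epidemic and an $n$-dependent branching process $\mathcal{Z}^{f,(n)}$ on a common probability space, then showing (i) the two processes coincide up to the first time either exceeds size $k$ with $\mathbb{P}_\omega$-probability tending to one, and (ii) the total progeny $|\mathcal{Z}^{f,(n)}|$ converges in distribution (under $\mathbb{P}_\omega$) to $|\mathcal{Z}^f|$. Throughout I would condition on $\omega$ lying in the $\nu$-measure-one set on which $L^{(n)}/(\mu n)\asarrow 1$, $L'^{(n)}/(\mu n)\asarrow 1$ and the empirical distributions $A^{(n)},B^{(n)}$ converge weakly to $A,B$ with matching first moments, all of which follow from the strong law of large numbers together with~\eqref{intermedA}--\eqref{intermedB}.

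On such an $\omega$, I would run the sequential exploration of Remark~\ref{remark2.2} starting from the uniformly chosen initial infective $i^*\in V^{(n)}$. The exploration reveals cliques (vertices of $V'^{(n)}$) and their incident edges one at a time, attaching to each uncovered vertex of $V^{(n)}$ an independent copy of $\mathcal{I}$ and then resolving the within-clique local epidemic once the clique is fully exposed. Simultaneously I would build $\mathcal{Z}^{f,(n)}$, which has the same law as $\mathcal{Z}^f$ but with $A,\tilde{A},\tilde{B}$ replaced by $A^{(n)},\tilde{A}^{(n)},\tilde{B}^{(n)}$, driven by the same underlying Poisson variables and end-vertex selections. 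The two constructions are identical unless either an end-vertex in $V^{(n)}$ picked during the exploration has already been explored (so the edge is ignored in $G^{(n)}$ but still counts as a child in $\mathcal{Z}^{f,(n)}$), or a clique in $V'^{(n)}$ is revisited. Because at most $O(k)$ vertices of each class are touched before the total size reaches $k$, and because end-vertices are chosen from $V^{(n)}$ and $V'^{(n)}$ with weights summing to $L^{(n)},L'^{(n)}=\Theta(n)$ on the good set of $\omega$, a standard union-bound shows that the conditional probability of any collision during the first $k$ steps tends to zero in $\nu$-probability.

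It remains to prove $\mathbb{P}_\omega(|\mathcal{Z}^{f,(n)}|=k)\parrow\mathbb{P}(|\mathcal{Z}^f|=k)$. Since the offspring counts of $\mathcal{Z}^{f,(n)}$ are mixed Poissons driven by $A^{(n)}$ and its size-biased variants, the continuity of size-biasing under convergence in distribution plus convergence of first moments (noted just below~\eqref{sibidi}) gives $C^{f,(n)}\Rightarrow C^f$ and $\tilde{C}^{f,(n)}\Rightarrow\tilde{C}^f$ on the good set; the local-epidemic ingredient $T(m,\mathcal{I})$ does not depend on $n$. The event $\{|\mathcal{Z}^{f,(n)}|=k\}$ depends on only finitely many offspring variables, so finite-dimensional convergence of the offspring laws transfers to pointwise convergence of the total-progeny probabilities at each $k$. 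The main obstacle will be handling the uncountable type space $(0,\infty]$ in the branching process, which prevents a direct appeal to classical finite-type convergence theorems; I would sidestep this by integrating out the types at the end, since only the cardinality $|\mathcal{R}^{(n)}|$ appears in the statement of Theorem~\ref{extinctionthm}.
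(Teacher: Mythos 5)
Your overall architecture is the same as the paper's: the paper proves Theorem~\ref{extinctionthm} by combining Lemma~\ref{bplimits} (convergence of $\mathbb{P}_\omega(|\mathcal{Z}^{(n)}|=k)$ to $\mathbb{P}(|\mathcal{Z}^f|=k)$, where $\mathcal{Z}^{(n)}=\mathcal{Z}^f(A^{(n)},B^{(n)},\mathcal{I})$) with Lemma~\ref{fcoupling} (a coupling of $\mathcal{Z}^{(n)}$ with the exploration of the epidemic as in Remark~\ref{remark2.2}, killed at the first collision). However, there is a genuine gap in your quantitative claim that ``at most $O(k)$ vertices of each class are touched before the total size reaches $k$''. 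For the class $V'^{(n)}$ this is false: on the event $\{|\mathcal{R}^{(n)}|=k\}$ each of the at most $k$ infectives still belongs to a mixed-Poisson number of cliques, and cliques whose local epidemics infect nobody (empty litters) are nevertheless drawn from $V'^{(n)}$ during the exploration. Hence the number of clique draws made before the process terminates is random and unbounded, not a deterministic function of $k$. The same issue invalidates your assertion that $\{|\mathcal{Z}^{f,(n)}|=k\}$ ``depends on only finitely many offspring variables'' in your coupled, litter-by-litter construction: the number of litter variables consumed is not bounded in terms of $k$. Consequently your collision union bound has an unbounded number of terms, and the transfer of finite-dimensional convergence to the total-progeny probabilities does not go through as written.

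What is missing is precisely the uniform tail control that occupies most of the paper's proof of Lemma~\ref{bplimits}. One introduces the total number $H^{(n)}$ (respectively $H$) of possibly empty litters, proves $\mathbb{P}_{\omega}(|\mathcal{Z}^{(n)}|=k,H^{(n)}=l)\parrow\mathbb{P}(|\mathcal{Z}^f|=k,H=l)$ for each fixed $l$, and then shows that $\mathbb{P}(|\mathcal{Z}^f|=k,H>L)$ and its $n$-dependent analogue are small \emph{uniformly in $n$} for large $L$. This uniformity is not automatic; the paper's argument uses $\mathbb{P}(\mathcal{I}=0)=0$ (so the first $k$ infectious periods exceed some $c_1>0$ with high probability) together with $\mathcal{MP}(\tilde{B}^{(n)})\Rightarrow\mathcal{MP}(\tilde{B})$ (so a fraction at least $c_2>0$ of the first $L$ evaluated cliques have size $\ge 2$, uniformly in large $n$): if more than $L$ litters were produced, some local epidemic would generate a new infective with probability tending to $1$ as $L\to\infty$, contradicting total size $k$. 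Once $H^{(n)}\le L$ is in force, your birthday-problem bound is sound, provided you also control the size-biased weights of the drawn vertices; the paper does this by noting that the total weight of vertices with weight exceeding $\log n$ is $\nu$-a.s.\ $o(n)$, giving an expected number of coincidences of at most $k(k-1)\log n/(2L^{(n)})\parrow 0$, and couples on the event $\{T^{(n)}>k,\,T'^{(n)}>l\}$. The remaining ingredients of your outline --- $\nu$-a.s.\ weak convergence of $A^{(n)},B^{(n)}$ and of their size-biased versions with matching means, the $n$-independence of the within-clique quantities $T(\cdot,\mathcal{I})$ and of the type marks, and integrating out the types --- do agree with the paper's proof.
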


\begin{theorem}\label{finalsizethm}
Suppose that $R_*>1$. Then for every $0 < \varepsilon< \rho^b(A,B,\mathcal
{I})$, 
\[
\mathbb{P}_{\omega} \bigl( \bigl\llvert n^{-1}\bigl|
\mathcal{R}^{(n)}(\omega,\mathcal {I})\bigr| - \rho^b(A,B,
\mathcal{I})\bigr\rrvert < \varepsilon \bigr) \parrow\rho (A,B,\mathcal{I}).
\]
\end{theorem}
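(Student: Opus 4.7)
The plan is to approximate backward susceptibility-set explorations by $\mathcal{Z}^b$, lower-bound the forward epidemic by a pruned finite-type version of $\mathcal{Z}^f$, and control all joint moments of $n^{-1}|\mathcal{R}^{(n)}|$ via the identity $\chi_v:=\ind(v\in\mathcal{R}^{(n)})=\ind(i^*\in\mathcal{S}_v)$, where $\mathcal{S}_v$ is the susceptibility set of $v$.

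For a uniformly drawn $v\in V^{(n)}\setminus\{i^*\}$, I would explore $\mathcal{S}_v$ on a generation basis using the backward analogue of Remark~\ref{remark2.2}, stopping once $k_n:=\lceil\log n\rceil$ vertices in $V^{(n)}$ have been revealed or the exploration dies out. At each step the next $V^{(n)}$-vertex (resp.\ $V'^{(n)}$-vertex) is sampled proportionally to $A_i/L^{(n)}$ (resp.\ $B_j/L'^{(n)}$); together with the strong laws $L^{(n)}/(\mu n)\asarrow 1$, $L'^{(n)}/(\mu n)\asarrow 1$ and the empirical convergences $A^{(n)}\Rightarrow A$, $B^{(n)}\Rightarrow B$, this yields size-biased offspring with the $\mathcal{MP}(\tilde A)$, $\mathcal{MP}(\tilde B)$ distributions of $\mathcal{Z}^b$. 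A union bound shows the exploration is collision-free for $k_n$ steps with $\mathbb{P}_\omega$-probability $1-O(k_n^2/n)$ for $\nu$-typical $\omega$, and the resulting coupling gives $\mathbb{P}_\omega(|\mathcal{S}_v^{(k_n)}|=k_n)\parrow\rho^b$. Running $k$ such explorations from independent uniform vertices in parallel, the total collision probability remains $O(k^2k_n^2/n)=o(1)$, establishing asymptotic mutual independence.

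For the forward exploration from $i^*$, I would use the pruning strategy in the proof of Theorem~\ref{survandR0}: truncate $\mathcal{I}$ to $\mathcal{I}'_\epsilon$ as in~\eqref{RstarIepsPrime} and cap clique sizes at some $K$, obtaining a finite-type irreducible supercritical branching process whose survival probability may be made arbitrarily close to $\rho$. Coupling the pruned process with the forward exploration remains valid for $T=T(n)\to\infty$ generations with $\mathbb{P}_\omega$-probability $1-o(1)$, and supercritical growth of the pruned process combined with a further round of propagation through fresh cliques yields at least $\eta n$ ultimately infected for some $\eta>0$; hence $\mathbb{P}_\omega(|\mathcal{R}^{(n)}|\ge\eta n)\parrow\rho$. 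Now for $k$ uniformly drawn $v_1,\dots,v_k$ independent of $i^*$, jointly construct the $k$ backward explorations and the forward exploration; they are pairwise vertex-disjoint with $\mathbb{P}_\omega$-probability $1-o(1)$. On the joint event $\mathcal{G}_n$ that all $k$ backward explorations reach $k_n$ and $|\mathcal{R}^{(n)}|\ge\eta n$ (which by the independence satisfies $\mathbb{P}_\omega(\mathcal{G}_n)\parrow\rho(\rho^b)^k$), the forward infected set meets each $\mathcal{S}_{v_i}^{(k_n)}$ with $\mathbb{P}_\omega$-probability $1-O((1-\eta)^{k_n})=1-o(1)$, since the backward sets serve as essentially fresh $k_n$-targets for the $\eta n$-sized forward footprint; on $\mathcal{G}_n^c$ at least one $\chi_{v_i}=0$ with $\mathbb{P}_\omega$-probability $1-o(1)$. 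After discarding $O(1/n)$ contributions from non-distinct $k$-tuples, this gives $n^{-k}\mathbb{E}_\omega[|\mathcal{R}^{(n)}|^k]\parrow\rho(\rho^b)^k=\mathbb{E}[T_F^k]$ for every $k\in\mathbb{N}$. Since $T_F$ is bounded, method of moments applied along $\nu$-almost-sure subsequences yields $\mathbb{P}_\omega(n^{-1}|\mathcal{R}^{(n)}|\le x)\parrow\mathbb{P}(T_F\le x)$ at every continuity point $x$, and taking $x=\rho^b\pm\epsilon$ for $0<\epsilon<\rho^b$ gives the theorem.

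The main obstacle is the forward lower bound: since $\mathcal{Z}^f$ is of uncountable type and has correlated offspring within a clique, one cannot argue directly that survival implies $\Theta(n)$ infections. The remedy is the two-stage reduction above — prune to a finite-type irreducible supercritical process with survival probability arbitrarily close to $\rho$, then propagate its survival to a positive fraction of the population via the coupling. A secondary concern is ensuring that the collision estimates and the various LLN statements for the weights hold uniformly in the $\nu$-probability sense.
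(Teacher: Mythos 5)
Your overall architecture matches the paper's: approximate susceptibility sets by the backward process $\mathcal{Z}^b$, lower-bound the forward epidemic to get $\Theta(n)$ infections with probability tending to $\rho$, glue the two by using the forward footprint as a target for the backward frontier, and conclude via moments (the paper uses only first and second moments conditioned on the event $\{\bar{\mathcal{W}}'^{(n)}(c_1)>\eta n\}$ plus Chebyshev, where you use all moments; that difference is harmless). But there is a genuine gap at exactly the step you yourself flag as the main obstacle. Pruning to a finite-type supercritical process (truncating $\mathcal{I}$ and capping clique sizes) makes the survival probability close to $\rho$, but it does nothing about the breakdown of the coupling caused by depletion: a naive coupling of the pruned branching process with the epidemic exploration is valid only while the explored set has size $o(\sqrt{n})$ (up to polylogarithmic factors), by the same birthday-problem estimate you invoke on the backward side. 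From a set of size $o(\sqrt{n})$, ``a further round of propagation through fresh cliques'' multiplies the size by a constant factor only; it cannot produce $\eta n$ infections. Bridging from $\sqrt{n}$ to $\Theta(n)$ would need $\Theta(\log n)$ further rounds, during which depletion is no longer negligible --- which is precisely the difficulty to be overcome, not a consequence of supercriticality.

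The paper closes this gap in Lemma~\ref{eplarbra} with a different pruning: order the vertices by weight and kill, in the lower-bounding process $\mathcal{Z}^{f}(A^{(n)},B^{(n)},\mathcal{I}(c_1),\gamma)$, every child (respectively litter) whose sampled uniform variable lands in the top $\gamma$-fraction of the weight, i.e.\ above the weight of vertex $R^{(n)}(\gamma)$ (respectively $R'^{(n)}(\gamma)$). With this calibration, a vertex ignored in the epidemic exploration because it lies in the growing forbidden set corresponds to a killed child in the branching process, so the epidemic \emph{dominates} the branching process not merely for $o(\sqrt{n})$ steps but until the forbidden weight exceeds $\bar{\gamma}L^{(n)}=\Theta(n)$; only then does a single law-of-large-numbers round over the reserved cliques with labels at least $R'^{(n)}(\gamma/3)$ deliver clique weight $\Theta(n)$, and Lemma~\ref{killprocess} (which rests on the uniqueness Lemma~\ref{unilem}) shows the killed process's survival probability tends to $\rho$ as $\gamma,c_1\downarrow 0$. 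Your sketch contains no analogue of this weight-rank killing, so the assertion $\mathbb{P}_\omega(|\mathcal{R}^{(n)}|\ge\eta n)\parrow\rho$ is unsupported. Two smaller repairs you would also need: heavy vertices must be truncated (the paper works in $\hat{G}^{(n)}$, all weights at most $\log n$) before your collision bound $O(k_n^2/n)$ and the backward couplings are legitimate under only $\mathbb{E}[A],\mathbb{E}[B]<\infty$; and the forward exploration must be run with the backward neighbourhoods removed (the paper's forbidden sets $K^i(t_n)$, $K'^i(t_n)$ and the set $I(c_1)$), so that ``the backward sets serve as fresh targets'' and the claimed asymptotic independence of forward largeness from backward survival become actual statements about the construction rather than assertions.
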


\begin{pf*}{Proofs of Theorems \ref{extinctionthmu} and \ref{finalsizethmu}}
Note that, for fixed $k \in\mathbb{N}$, the sequence of random variables
$(\mathbb{P}_{\omega}(|\mathcal{R}^{(n)}(\omega,\mathcal{I})|=k), n \in
\mathbb{N})$ is uniformly integrable, so Theorem \ref{extinctionthmu}
follows immediately from Theorem \ref{extinctionthm}
(and Theorem
7.10.3 of~\cite{Grim92}), by taking expectations with respect to the measure
$\nu$. Theorem \ref{finalsizethmu} follows similarly from Theorem \ref{finalsizethm}.
\end{pf*}

\subsection{Proof of Theorem \protect\texorpdfstring{\ref{extinctionthm}}{5.1}}
\label{pextinctionthm}

In this proof we use three processes:
\begin{itemize}
\item the branching process $\mathcal{Z}^f = \mathcal{Z}^f(A,B,\mathcal{I})$,
\item the branching process $\mathcal{Z}^{(n)} = \mathcal
{Z}^f(A^{(n)},B^{(n)},\mathcal{I})$, defined similarly to $\mathcal
{Z}^f(A,\allowbreak B,\mathcal{I})$ but with $A$ and $B$ replaced, respectively, by
$A^{(n)}$ and $B^{(n)}$, defined in \eqref{AnDef} and \eqref{BnDef},
\item the exploration process of the epidemic generated graph on
$G^{(n)}$, denoted by $\mathcal{R}^{(n)} = \mathcal{R}^{(n)}(\omega
,\mathcal{I}) = (\mathcal{R}^{(n)}_0,\mathcal{R}^{(n)}_1,\ldots)$.
\end{itemize}
%
In the exploration process, $\mathcal{R}^{(n)}_0$ denotes the initially
infective vertex $v_0$, $\mathcal{R}^{(n)}_1$ denotes the subset of
vertices in $V^{(n)}\setminus\mathcal{E}^{(n)}_0$ that in the epidemic
generated graph have an edge to them from $v_0$,
$\mathcal{R}^{(n)}_2$ denotes the subset of vertices in
$V^{(n)}\setminus(\mathcal{R}^{(n)}_0 \cup\mathcal{R}^{(n)}_1)$ that
in the epidemic generated graph have an edge to them from at least one
member of $\mathcal{R}^{(n)}_1$, and so on. With slight abuse of
notation we now use $\mathcal{R}^{(n)}$ for the exploration process,
where previously it was the set of ultimately recovered vertices in
$\mathcal{E}^{(n)}$. As with the branching process $\mathcal{Z}^f$,
$|\mathcal{R}^{(n)}|=\sum_{i=0}^{\infty}|\mathcal{R}^{(n)}_i|$ is the
total number of ultimately recovered vertices; note that this has
precisely the same meaning as in Section~\ref{SIRRIG}.

To prove Theorem \ref{extinctionthm} we first show that the
distribution of the total size of $\mathcal{Z}^{(n)}$ is approximately
that of $\mathcal{Z}^f$, then that the distribution of the total size
of $\mathcal{R}^{(n)}$ is approximately that of $\mathcal{Z}^{(n)}$.

\begin{lemma}\label{bplimits}
For $k \in\mathbb{N}$, it holds that $\mathbb{P}_\omega(|\mathcal
{Z}^{(n)}| = k) \parrow\mathbb{P}(|\mathcal{Z}^f| = k)$.
\end{lemma}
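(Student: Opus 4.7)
The plan is to exploit the fact that $\mathcal{Z}^{(n)}$ and $\mathcal{Z}^f$ differ only in the laws of the mixed-Poisson variables $X, \check{X}$ (numbers of cliques) and $\check{Y}$ (clique sizes minus one); everything else, namely the infectious-period types of individuals and the local-epidemic variables $T(m,\mathcal{I})$ for each integer $m$, can be realised identically in both processes (since they do not depend on $\omega$). The convergence of the total-progeny distribution then follows from weak convergence of the relevant mixed-Poisson distributions, together with the observation that the event $\{|\mathcal{Z}^f|=k\}$ depends on only finitely many offspring.

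First I would record that, for $\nu$-a.e.\ $\omega$, $A^{(n)} \Rightarrow A$ and $B^{(n)} \Rightarrow B$ as $n\to\infty$ (as noted after~\eqref{BnDef}), and the SLLN applied to the weight sequences gives $\mathbb{E}_\omega[A^{(n)}] \to \mathbb{E}[A]$ and $\mathbb{E}_\omega[B^{(n)}] \to \mathbb{E}[B]$ $\nu$-a.s.\ as well. By the remark after~\eqref{sibidi}, this yields $\tilde{A}^{(n)} \Rightarrow \tilde{A}$ and $\tilde{B}^{(n)} \Rightarrow \tilde{B}$ $\nu$-a.s. The map $X \mapsto \mathcal{MP}(X)$ is continuous in the weak topology: for $Y\sim\mathcal{MP}(X)$ the probability generating function satisfies $f_Y(s)=\phi_X(1-s)$, and $\phi_X(1-s)$ depends continuously on the law of $X$ by dominated convergence. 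Hence $\mathcal{MP}(A^{(n)}) \Rightarrow \mathcal{MP}(A)$, $\mathcal{MP}(\tilde{A}^{(n)}) \Rightarrow \mathcal{MP}(\tilde{A})$ and $\mathcal{MP}(\tilde{B}^{(n)}) \Rightarrow \mathcal{MP}(\tilde{B})$ $\nu$-a.s.; since each limit is $\mathbb{Z}_+$-valued, this is pointwise convergence of the pmfs.

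I would then couple $\mathcal{Z}^{(n)}$ and $\mathcal{Z}^f$ on a single probability space (for $\omega$ in a full-$\nu$-measure set) as follows. To each individual in $\mathcal{Z}^f$ one attaches an independent uniform random variable and a type (infectious period) drawn from $\mathcal{I}$, and one uses the inverse-cdf construction to draw from the two pmfs $\mathcal{MP}(A)$ (or $\mathcal{MP}(\tilde{A})$) and $\mathcal{MP}(A^{(n)})$ (or $\mathcal{MP}(\tilde{A}^{(n)})$) from a common uniform, and similarly for the clique sizes $\check{Y}$ versus $\check{Y}^{(n)}$. For $\mathbb{Z}_+$-valued variables with pointwise-converging pmfs this construction makes the $\mathcal{Z}^{(n)}$-variable eventually equal to the $\mathcal{Z}^f$-variable almost surely. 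The local-epidemic sizes $T(m,\mathcal{I})$ and offspring types are taken identical in both processes, so once the number-of-cliques and clique-size variables agree, the entire offspring multiset of the individual agrees. Since $\{|\mathcal{Z}^f|=k\}$ depends on the offspring of only the first $k$ individuals (in an exploration order), we obtain $\ind(|\mathcal{Z}^{(n)}|=k) \to \ind(|\mathcal{Z}^f|=k)$ a.s.\ under the coupling, and bounded convergence gives $\mathbb{P}_\omega(|\mathcal{Z}^{(n)}|=k) \to \mathbb{P}(|\mathcal{Z}^f|=k)$ $\nu$-a.s., which in particular implies the claimed convergence in $\nu$-probability.

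The hard part will be setting up the coupling cleanly so that it respects the branching structure with uncountable type space, while using only weak convergence of the mixed-Poisson mixing distributions. Because the infectious-period types of offspring are independent of $\omega$ and the litter-size variables $T(m,\mathcal{I})$ depend on $\omega$ only through the integer argument $m$, the coupling problem reduces to coupling the finitely many $\mathbb{Z}_+$-valued number-of-cliques and clique-size variables associated with the first $k$ individuals; this is routine via the inverse-cdf construction and guarantees almost-sure agreement for all sufficiently large $n$, which is what drives the argument.
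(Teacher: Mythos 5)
Your proposal is sound, but it takes a genuinely different route from the paper's proof of Lemma~\ref{bplimits}. The paper works purely distributionally: it introduces the total number of litters $H$ (respectively $H^{(n)}$), observes that weak convergence of the mixed-Poisson litter-number and litter-size laws gives $\mathbb{P}_{\omega}(|\mathcal{Z}^{(n)}|=k,H^{(n)}\leq l)\parrow\mathbb{P}(|\mathcal{Z}^f|=k,H\leq l)$ for each fixed $l$ (a finite-dimensional event), and then removes the truncation by a uniform-in-$n$ tail estimate: on $\{|\mathcal{Z}^f|=k,\,H>L\}$, with high probability the first $k$ individuals have infectious periods exceeding some $c_1>0$ (here $\mathbb{P}(\mathcal{I}=0)=0$ is used) and a positive fraction $c_2$ of the first $L$ cliques are non-empty, forcing this event to have probability small uniformly in $n$ once $L$ is large; a triangle inequality then yields convergence in $\nu$-probability. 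Your coupling argument replaces exactly this truncation-plus-tail-bound machinery: by realising both processes from shared uniforms and shared types on a common (Ulam--Harris) skeleton, with identical local-epidemic randomness given the clique size, the event $\{|\mathcal{Z}^f|=k\}$ is determined by an almost surely \emph{finite} collection of $\mathbb{Z}_+$-valued draws (the clique-number and clique-size variables of the first $k$ explored individuals), and pointwise pmf convergence — equivalently total-variation convergence, by Scheff\'{e} — of the laws $\mathcal{MP}(A^{(n)})$, $\mathcal{MP}(\tilde{A}^{(n)})$, $\mathcal{MP}(\tilde{B}^{(n)})$ makes each such draw eventually agree with its limit, so no quantitative uniform tail bound is needed. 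What each approach buys: yours is conceptually cleaner and actually delivers the stronger conclusion $\mathbb{P}_{\omega}(|\mathcal{Z}^{(n)}|=k)\to\mathbb{P}(|\mathcal{Z}^f|=k)$ for $\nu$-almost every $\omega$ (using $\mathbb{E}_{\omega}[A^{(n)}]=(L^{(n)}/n)(L'^{(n)}/(\mu n))\asarrow\mu$ to get convergence of the size-biased laws, as noted after~\eqref{sibidi}), while the paper's truncation-by-litters scheme avoids constructing any coupling of the whole tree and, importantly, is re-used almost verbatim in the proof of Lemma~\ref{fcoupling}, so it earns its keep later. The one step you must execute carefully — and you rightly flag it — is that the identity of ``the first $k$ individuals'' could a priori differ between the two processes at finite $n$; this is resolved by running the exploration on the limit tree's labels and noting that once all draws attached to those labels agree (which happens for all sufficiently large $n$, almost surely, since there are only finitely many relevant labels), the two explorations visit the same labels with the same outcomes up to the time the event $\{|\mathcal{Z}^f|=k\}$ is decided.
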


\begin{pf}
Recall that a litter in a branching process is a group of children
corresponding to the individuals infected in a local
epidemic in one clique, excluding the initial susceptible.
Let the total number of (possibly empty) litters in $\mathcal{Z}^f$ and
$\mathcal{Z}^{(n)}$ be denoted by $H$ and $H^{(n)}$, respectively.
Note that, using Theorem 7.2.19 of \cite{Grim92}, if $X_n \Rightarrow X$,
then $\mathcal{MP}(X_n) \Rightarrow\mathcal{MP}(X)$.
Recall further that $A^{(n)} \Rightarrow A$ and $B^{(n)} \Rightarrow B$
as $n \to\infty$. These latter convergence results also hold for the
size-biased variants, as is shown just below equation \eqref{sibidi}.
It follows that, as $n \to\infty$, the number and sizes of litters
spawned by a typical individual in $\mathcal{Z}^{(n)}$ converge in
distribution to those of a corresponding typical individual in $\mathcal{Z}^f$.
Hence, for $k \in\mathbb{N}$ and $l \in\mathbb{Z}_+$,
\[
\mathbb{P}_{\omega}\bigl(\bigl|\mathcal{Z}^{(n)}\bigr| = k,
H^{(n)}=l\bigr) \parrow\mathbb {P}\bigl(\bigl|\mathcal{Z}^f\bigr| = k,
H=l\bigr).
\]
Therefore, for every $l \in\mathbb{N}$, we have
%
%
\begin{equation}
\label{zhnconv} \mathbb{P}_{\omega}\bigl(\bigl|\mathcal{Z}^{(n)}\bigr| = k,
H^{(n)} \leq l\bigr) \parrow \mathbb{P}\bigl(\bigl|\mathcal{Z}^f\bigr|
= k, H \leq l\bigr).
\end{equation}
Note that
%
%
\begin{equation}
\label{znequ}\qquad \mathbb{P}_{\omega}\bigl(\bigl|\mathcal{Z}^{(n)}\bigr| = k
\bigr) = \mathbb{P}_{\omega
}\bigl(\bigl|\mathcal{Z}^{(n)}\bigr| = k,
H^{(n)} \leq l\bigr) + \mathbb{P}_{\omega}\bigl(\bigl|
\mathcal{Z}^{(n)}\bigr| = k, H^{(n)} > l\bigr)
\end{equation}
and
\[
\mathbb{P}\bigl(\bigl|\mathcal{Z}^f\bigr| = k\bigr) = \mathbb{P}\bigl(\bigl|
\mathcal{Z}^f\bigr| = k, H \leq l\bigr) + \mathbb{P}\bigl(\bigl|
\mathcal{Z}^f\bigr| = k, H > l\bigr).
\]

Fix $k \in\mathbb{N}$ and $\varepsilon>0$. Let $H_k$ be the total number
of litters spawned by the first $k$ vertices evaluated in the branching
process $\mathcal{Z}^f$, so $H_k$ is distributed as $X_0+\check
{X}_1+\check{X}_2+\cdots+\check{X}_{k-1}$, where $X_0,\check{X}_1,\check
{X}_2,\ldots,\check{X}_{k-1}$ are independent, $X_0\sim\mathcal{MP}(A)$
and $\check{X}_i\sim\mathcal{MP}(\tilde{A}) $ $(i=1,2,\ldots,k-1)$.
Now, for any $l \in \mathbb{Z}_+$,
\[
\mathbb{P}\bigl(\bigl|\mathcal{Z}^f\bigr| = k, H > l\bigr)=\mathbb{P}\bigl(\bigl|
\mathcal{Z}^f\bigr| = k, H_k > l\bigr) \le\mathbb{P}(
H_k > l).
\]
Further, $H_k$ is a proper random variable, so $\mathbb{P}( H_k > l)
\downarrow0$ as $l \to\infty$. Thus, there exists $L =L(k, \varepsilon)
\in\mathbb{N}$, such that for all $l>L$,
%
%
\begin{equation}
\label{zinqeu1} \mathbb{P}\bigl(\bigl|\mathcal{Z}^f\bigr| = k\bigr) <
\mathbb{P}\bigl(\bigl|\mathcal{Z}^f\bigr| = k,H \leq l\bigr) + \varepsilon/3.
\end{equation}

Let $H_k^{(n)}$ be the total number of litters in the first $k$
vertices evaluated in the branching process $\mathcal{Z}^{(n)}$. Then,
$H_k^{(n)}\Rightarrow H_k$, since $\mathcal{MP}(A^{(n)}) \Rightarrow
\mathcal{MP}(A)$ and $\mathcal{MP}(\tilde{A}^{(n)}) \Rightarrow\mathcal
{MP}(\tilde{A})$, whence
$\mathbb{P}_{\omega}(H^{(n)}>l) \parrow\mathbb{P}(H>l)$,
for any $l \in \mathbb{Z}_+$. Arguing similarly to above and
recalling \eqref{znequ}, it follows that, given any $\delta>0$, there
exists $L' =L'(k, \varepsilon, \delta) \in\mathbb{N}$, such that for all $l>L'$,
%
%
\begin{equation}
\label{zinqeu2} \nu \bigl(\mathbb{P}_{\omega}\bigl(\bigl|\mathcal{Z}^{(n)}\bigr|
= k\bigr) < \mathbb {P}_{\omega}\bigl(\bigl|\mathcal{Z}^{(n)}\bigr| =
k,H^{(n)} \leq l\bigr) + \varepsilon/3 \bigr)>1-\delta/2
\end{equation}
for all sufficiently large $n$.

Fix $\delta>0$ and choose $l > \max(L,L')$. Then \eqref{zhnconv}
implies that
%
%
\begin{equation}
\label{zinqeu3} \nu \bigl( \bigl|\mathbb{P}_{\omega}\bigl(\bigl|
\mathcal{Z}^{(n)}\bigr| = k,H^{(n)} \leq l\bigr) - \mathbb{P}\bigl(\bigl|
\mathcal{Z}^f\bigr| = k,H \leq l\bigr) \bigr|<\varepsilon/3 \bigr) > 1-\delta/2
\end{equation}
for all sufficiently large $n$.
Using the triangle inequality, we obtain
\begin{eqnarray*}
&&\bigl\llvert \mathbb{P}_{\omega} \bigl(\bigl|\mathcal{Z}^{(n)}\bigr| = k
\bigr)- \mathbb {P}\bigl(\bigl|\mathcal{Z}^f\bigr| = k\bigr)\bigr\rrvert
\\
&&\qquad\leq
\bigl\llvert \mathbb{P}_{\omega}\bigl(\bigl|\mathcal{Z}^{(n)}\bigr| = k
\bigr) - \mathbb {P}_{\omega}\bigl(\bigl|\mathcal{Z}^{(n)}\bigr| =
k,H^{(n)} \leq l\bigr)\bigr\rrvert
\\
& &\qquad\quad{}+\bigl\llvert \mathbb{P}_{\omega}\bigl(\bigl|\mathcal{Z}^{(n)}\bigr| =
k,H^{(n)} \leq l\bigr) - \mathbb{P}\bigl(\bigl|\mathcal{Z}^f\bigr| =
k,H \leq l\bigr)\bigr\rrvert
\\
& &\qquad\quad{}+\bigl\llvert \mathbb{P}\bigl(\bigl|\mathcal{Z}^f\bigr| = k\bigr) -
\mathbb{P}\bigl(\bigl|\mathcal{Z}^f\bigr| = k,H \leq l\bigr)\bigr\rrvert,
\end{eqnarray*}
whence, noting that the final term is independent of $\omega$,
\begin{eqnarray*}
&&\nu \bigl( \bigl|\mathbb{P}_{\omega}\bigl(\bigl|\mathcal{Z}^{(n)}\bigr| = k
\bigr) -  \mathbb {P}\bigl(\bigl|\mathcal{Z}^f\bigr| = k\bigr) \bigr| \geq
\varepsilon \bigr)
\\
&&\qquad\leq \nu \bigl(\mathbb{P}_{\omega}\bigl(\bigl|\mathcal{Z}^{(n)}\bigr| =
k\bigr) \geq \mathbb{P}_{\omega}\bigl(\bigl|\mathcal{Z}^{(n)}\bigr| =
k,H^{(n)} \leq l\bigr) + \varepsilon /3 \bigr)
\\
&&\qquad\quad{} + \nu \bigl( |\mathbb{P}_{\omega}\bigl(\bigl|\mathcal{Z}^{(n)}\bigr| =
k,H^{(n)} \leq l\bigr) - \mathbb{P}\bigl(\bigl|\mathcal{Z}^f\bigr| =
k,H \leq l\bigr) | \geq\varepsilon /3 \bigr)
\\
&&\qquad\quad{} + \ind \bigl(\mathbb{P}\bigl(\bigl|\mathcal{Z}^f\bigr| = k\bigr) \geq
\mathbb{P}\bigl(\bigl|\mathcal {Z}^f\bigr| = k,H \leq l\bigr) + \varepsilon/3
\bigr).
\end{eqnarray*}
By choosing $l$ large enough, it follows, using \eqref{zinqeu1}, \eqref
{zinqeu2} and \eqref{zinqeu3}, that for all sufficiently large $n$,
\[
\nu \bigl( \bigl|\mathbb{P}_{\omega}\bigl(\bigl|\mathcal{Z}^{(n)}\bigr| = k
\bigr)- \mathbb {P}\bigl(\bigl|\mathcal{Z}^f\bigr| = k\bigr)\bigr | \geq\varepsilon
\bigr) \leq\delta/2 + \delta /2 + 0 = \delta,
\]
and the lemma then follows.
\end{pf}

\begin{lemma}\label{fcoupling} For $k\in\mathbb{N}$,
$\mathbb{P}_{\omega}(|\mathcal{Z}^{(n)}| \leq k) - \mathbb{P}_{\omega
}(|\mathcal{R}^{(n)}| \leq k) \parrow0$.
\end{lemma}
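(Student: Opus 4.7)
The plan is a coupling argument between $\mathcal{Z}^{(n)}$ and the exploration process $\mathcal{R}^{(n)}$, built on the sequential bipartite construction of Remark~\ref{remark2.2}. I would construct both processes on the same probability space starting from the same uniformly chosen initial vertex $v_{i_0}\in V^{(n)}$, and at each exploration step of an active individual $v_i\in V^{(n)}$ (respectively an active clique $v'_j\in V'^{(n)}$) they share a common $\Poisson(A^{(n)}_i)$ (respectively $\Poisson(B^{(n)}_j)$) number of outgoing edges together with a common i.i.d.\ sequence of end-vertex draws proportional to $B_j/L'^{(n)}$ (respectively $A_l/L^{(n)}$); the infectious periods attached to the vertices of $V^{(n)}$, and hence the Epidemic Generated Graph thinning, are shared as well. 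The two constructions then trace identical trajectories until the first draw lands on a previously-explored vertex in $V^{(n)}$ or in $V'^{(n)}$: in $\mathcal{Z}^{(n)}$ this still spawns a fresh individual or clique, whereas in $\mathcal{R}^{(n)}$ the edge is discarded.

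Let $F^{(n)}_k$ be the event that some such collision occurs among the samples made before $\mathcal{Z}^{(n)}$ has discovered $k+1$ individuals. On $(F^{(n)}_k)^c$ the two trajectories agree up to this stopping moment, so $\{|\mathcal{Z}^{(n)}|\le k\}\cap(F^{(n)}_k)^c=\{|\mathcal{R}^{(n)}|\le k\}\cap(F^{(n)}_k)^c$, and hence
\begin{equation*}
\bigl|\mathbb{P}_{\omega}(|\mathcal{Z}^{(n)}|\le k)-\mathbb{P}_{\omega}(|\mathcal{R}^{(n)}|\le k)\bigr|\le\mathbb{P}_{\omega}(F^{(n)}_k).
\end{equation*}
It therefore suffices to show $\mathbb{P}_{\omega}(F^{(n)}_k)\parrow 0$.

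Denote by $N^{(n)}_k$ the total number of end-vertex draws made up to this stopping time, and decompose $\mathbb{P}_\omega(F^{(n)}_k)\le\mathbb{P}_\omega(F^{(n)}_k\cap\{N^{(n)}_k\le N\})+\mathbb{P}_\omega(N^{(n)}_k>N)$. A birthday union bound shows that the first term is at most $\binom{N}{2}\bigl(\max_{i\le n}A_i/L^{(n)}+\max_{j\le\lfloor\alpha n\rfloor}B_j/L'^{(n)}\bigr)$; since $\mathbb{E}[A],\mathbb{E}[B]\in(0,\infty)$ imply that $\max_{i\le n}A_i/n$ and $\max_{j\le\lfloor\alpha n\rfloor}B_j/n$ converge to $0$ in $\nu$-probability, and $L^{(n)}/(\mu n),L'^{(n)}/(\mu n)\asarrow 1$, this upper bound converges to $0$ in $\nu$-probability for each fixed $N$. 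For the second term, the convergence of the empirical weight distributions and their size-biased variants used in the proof of Lemma~\ref{bplimits} delivers convergence in distribution of $N^{(n)}_k$ to the corresponding $\nu$-a.s.\ finite quantity defined from $\mathcal{Z}^f$, so $N^{(n)}_k$ is tight in $n$ and the second term becomes uniformly small once $N$ is chosen large. The main technical obstacle is this tightness of $N^{(n)}_k$, because an individual with an atypically large weight $A_i$ could in principle spawn arbitrarily many clique-draws (and a clique with large weight $B_j$ arbitrarily many vertex-draws); however, the distributional convergence of the weight environments already exploited in Lemma~\ref{bplimits} is precisely what rules this out with arbitrarily high $\nu$-probability, and combining the two vanishing bounds completes the proof.
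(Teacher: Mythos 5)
Your proposal is correct and takes essentially the same route as the paper: a coupling of the two exploration processes via shared Poisson edge counts and weight-proportional end-vertex draws, a birthday-type union bound on the collision probability, and control of the number of draws before the stopping time via the convergence of the (size-biased) empirical weight distributions already established for Lemma~\ref{bplimits}. The only differences are cosmetic --- you bound collisions by $\max_{i\le n}A_i/L^{(n)}+\max_{j\le\lfloor\alpha n\rfloor}B_j/L'^{(n)}$ where the paper truncates weights at $\log n$, and you package the tail control as tightness of $N^{(n)}_k$ where the paper conditions on the number of litters $H^{(n)}=l$ and reuses the truncation argument from Lemma~\ref{bplimits} --- neither of which changes the substance of the argument.
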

\begin{pf} 
The proof follows from a standard coupling argument, described below.
Firstly though, for each $n\in\mathbb{N}$, let $v^{(n)}_0$ be a vertex
chosen uniformly at random from $V^{(n)}$, and let
$v^{(n)}_1,v^{(n)}_2, \ldots $ be independently chosen vertices from
$V^{(n)}$, where the probability that a given vertex is chosen is
proportional to its $A$-weight. Let $a^{(n)}_0,a^{(n)}_1, \ldots $ be
the respective $A$-weights of $v^{(n)}_0, v^{(n)}_1, \ldots.$ Let
$\mathcal{I}^{(n)}_0$ be the type assigned to vertex $v^{(n)}_0$.
Let $v'^{(n)}_1,v'^{(n)}_2, \ldots $ be independently chosen vertices
(representing cliques) from $V'^{(n)}$ where the probability that a
given vertex is chosen is proportional to its $B$-weight. The
$B$-weights of $v'^{(n)}_1,v'^{(n)}_2, \ldots $ are denoted by
$b^{(n)}_1,b^{(n)}_2, \ldots,$ respectively.
Let the random variable
\[
T^{(n)} = \min\bigl(i\in\mathbb{N}\dvtx v^{(n)}_i
= v^{(n)}_j \mbox{ for some $j<i$}\bigr)
\]
be the smallest index at which a vertex from $V^{(n)}$ is chosen a
second time. Similarly, define
\[
T'^{(n)} = \min\bigl(i \in\mathbb{N}\dvtx
v'^{(n)}_i = v'^{(n)}_j
\mbox{ for some $j<i$}\bigr).
\]

The constructions of $\mathcal{Z}^{(n)}$ and $\mathcal{R}^{(n)}$ are
coupled as follows. The ancestor of $\mathcal{Z}^{(n)}$ spawns a
$\Poisson(a^{(n)}_0)$ number of (possibly empty) litters, $l'$ say. The
cliques that the initial infective in $\mathcal{R}^{(n)}$ belongs to
are given by
$v'^{(n)}_1, v'^{(n)}_2, \ldots, v'^{(n)}_{l'}$,
which might contain duplicates; the $B$-weights associated with these
litters are $b^{(n)}_1,b^{(n)}_2, \ldots, b^{(n)}_{l'}$. If
$T'^{(n)}>l'$, then there are no duplicates amongst $v'^{(n)}_1,
v'^{(n)}_2, \ldots, v'^{(n)}_{l'}$, and the processes stay coupled. If
not, the construction can be continued, but the details are not
important for our purposes.

If the coupling continues, the sizes of the litters (recall that litters
are defined both for the epidemic process and the branching process)
are then determined.
For each $i=1,2,\ldots,l'$, the size of litter $i$ is distributed as
the number of initially susceptible individuals which are
infected during a local epidemic in a group with one initially infectious
individual, having infectious period $\mathcal{I}^{(n)}_0$, and a
$\mathcal{P}(b^{(n)}_i)$ distributed number of initially susceptible
individuals. The litter sizes are all independent. Say that the total
number of vertices in the $l'$ litters is $l$, then they get
$A$-weights $a^{(n)}_1,a^{(n)}_2, \ldots, a^{(n)}_l$ and types $\mathcal
{I}^{(n)}_1,\mathcal{I}^{(n)}_2, \ldots, \mathcal{I}^{(n)}_l$, which
are i.i.d. and distributed as $\mathcal{I}$.
If $l < T^{(n)}$ the coupling continues, and the generation 1 vertices
are $v^{(n)}_1,v^{(n)}_2, \ldots, v^{(n)}_l$.
The coupling now proceeds in the obvious way. Note that in this
construction we have not yet decided which vertices are in the same
clique (of the random intersection graph) as $v^{(n)}_1$ but are not
infected by the local epidemic.

Let $H^{(n)}$ be as in the proof of Lemma \ref{bplimits}, and let
$H^{(*n)}$ be the corresponding number for $\mathcal{R}^{(n)}$. We need
to prove that for $k\in\mathbb{N}$ and $l \in\mathbb{Z}_+$,
\[
\mathbb{P}_{\omega}\bigl(\bigl|\mathcal{Z}^{(n)}\bigr| = k,
H^{(n)}=l\bigr) - \mathbb {P}_{\omega}\bigl(\bigl|
\mathcal{R}^{(n)}\bigr| = k, H^{(*n)}=l\bigr) \parrow0,
\]
and then deduce the statement of the lemma as in the latter part of the
proof of Lemma \ref{bplimits}. Note that the coupling gives
%
%
\begin{eqnarray}\label{bpcouple}
&&\mathbb{P}_{\omega}\bigl(\bigl|\mathcal{Z}^{(n)}\bigr| = k,
H^{(n)}=l,T^{(n)}>k, T'^{(n)}>l\bigr)
\nonumber
\\[-8pt]
\\[-8pt]
\nonumber
&&\qquad = \mathbb{P}_{\omega}\bigl(\bigl|\mathcal{R}^{(n)}\bigr| = k,
H^{(*n)}=l,T^{(n)}>k, T'^{(n)}>l\bigr).
\end{eqnarray}
Furthermore, letting $C^{(n)}(k,l) = \{T^{(n)} \leq k\} \cup\{T'^{(n)}
\leq l\}$, we have
\begin{eqnarray*}
\mathbb{P}_{\omega}\bigl(\bigl|\mathcal{Z}^{(n)}\bigr| = k,
H^{(n)}=l\bigr) &= & \mathbb{P}_{\omega}\bigl(\bigl|
\mathcal{Z}^{(n)}\bigr| = k, H^{(n)}=l,T^{(n)}>k,
T'^{(n)}>l\bigr)
\\
&&{} + \mathbb{P}_{\omega}\bigl(\bigl|\mathcal{Z}^{(n)}\bigr| = k,
H^{(n)}=l,C^{(n)}(k,l)\bigr).
\end{eqnarray*}
Note that the second term on the right-hand side of this expression is
bounded above by $\mathbb{P}_{\omega}(C^{(n)}(k,l))$.

Recall from Section~\ref{RIGs} that $\mu=\mathbb{E}[A]=\alpha\mathbb
{E}[B]<\infty$, which implies that the total weight of vertices in
$V^{(n)}$ with weight exceeding $\log n$ is $\nu$-almost surely~$o(n)$.
(To show this, note that, since $\mu< \infty$, for any $N>0$,
\[
n^{-1}\sum_{i=1}^n
A_i \ind(A_i>N) \asarrows \mathbb{E}\bigl[A\ind(A>N)\bigr]\qquad
\mbox{as $n \to\infty$}
\]
and $\mathbb{E}[A\ind(A>N)] \to0$ as $N \to\infty$.) A similar result
holds for the weights of the vertices in $V'^{(n)}$.
Hence, for every $k,l \in\mathbb{N}$, the probability that both $\max
(a^{(n)}_i \dvtx 0 \leq i \leq k) \leq\log n$ and $\max(b^{(n)}_j \dvtx 1 \leq
j \leq l) \leq\log n$ converges to 1 as $n\to\infty$. Thus, the total
weight of the first $k$ vertices and the first $l$ litters chosen in
the branching process is $\nu$-almost surely $O(\log n)$. By a birthday
problem argument we deduce that $\mathbb{P}_{\omega}(C^{(n)}(l,k))
\parrow0$. (Note that if $M_n(k)$ is the number of distinct pairs
$(i,j)$ with $0\leq i < j \leq k$ and $v_i^{(n)} = v_j^{(n)}$, then
under the above restrictions, $\mathbb{E}_\omega[M_n(k)]\leq\frac
{k(k-1)}{2}\frac{\log n}{L^{(n)}} \parrow0$).
Thus, for every $k,l\in\mathbb{N}$,
\[
\mathbb{P}_{\omega}\bigl(\bigl|\mathcal{Z}^{(n)}\bigr| = k,
H^{(n)}=l\bigr) - \mathbb{P}_{\omega}\bigl(\bigl|\mathcal{Z}^{(n)}\bigr|
= k, H^{(n)}=l,T^{(n)}>k, T'^{(n)}>l
\bigr)
\parrow0.
\]
Similarly, we deduce that, again for all $k,l\in\mathbb{N}$,
\begin{eqnarray*}
&&\mathbb{P}_{\omega}\bigl(\bigl|\mathcal{R}^{(n)}\bigr| = k,
H^{(*n)}=l\bigr) - \mathbb{P}_{\omega}\bigl(\bigl|\mathcal{R}^{(n)}\bigr|
= k, H^{(*n)}=l,T^{(n)}>k, T'^{(n)}>l
\bigr)\\
&&\qquad\parrow0,
\end{eqnarray*}
which, together with \eqref{bpcouple}, yields the lemma.
\end{pf}

Theorem \ref{extinctionthm} follows immediately by combining Lemmas \ref{bplimits} and \ref{fcoupling}.

\subsection{Proof of Theorem \protect\texorpdfstring{\ref{finalsizethm}}{5.2}}\label{pfinalsizethm}

Before considering susceptibility sets and backward branching
processes, we prove the following extension of Lemma \ref{bplimits}
which is required later in this section.
%
\begin{lemma}\label{survfin}
$\rho(A^{(n)},B^{(n)},\mathcal{I}) \parrow\rho(A,B,\mathcal{I})$.
\end{lemma}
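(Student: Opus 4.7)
The plan is to sandwich $\rho_n := \rho(A^{(n)}, B^{(n)}, \mathcal{I})$ between matching upper and lower bounds, each converging in probability (under $\nu$) to $\rho := \rho(A, B, \mathcal{I})$.

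For the upper bound I would exploit Lemma~\ref{bplimits} directly. Since $\rho_n = \mathbb{P}_\omega(|\mathcal{Z}^{(n)}|=\infty) \le 1 - \mathbb{P}_\omega(|\mathcal{Z}^{(n)}| \le k)$ for every $k$, and Lemma~\ref{bplimits} gives $\mathbb{P}_\omega(|\mathcal{Z}^{(n)}| \le k) \parrow \mathbb{P}(|\mathcal{Z}^f| \le k)$, which increases to $1-\rho$ as $k \to \infty$, a routine $\epsilon$-$\delta$ chase produces $\limsup_n \rho_n \le \rho$ in probability.

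The harder direction is the lower bound. If $\rho = 0$ nothing is needed, so by Theorem~\ref{survandR0} assume $R_*>1$. The plan is to approximate $\mathcal{I}$ from below by a discrete, bounded infectious period so that the approximating branching process becomes finite-type, where continuity of the survival probability is standard. Define $\mathcal{I}_m = 2^{-m}\lfloor 2^m(\mathcal{I} \wedge m)\rfloor$, which has finite support, satisfies $\mathcal{I}_m \uparrow \mathcal{I}$ pointwise under the natural coupling, and, by a Poisson-process coupling of local epidemics, gives a stochastically smaller number of infectives in each clique. Hence, writing $\rho^{(m)} = \rho(A,B,\mathcal{I}_m)$ and $\rho_n^{(m)} = \rho(A^{(n)},B^{(n)},\mathcal{I}_m)$, one has $\rho^{(m)} \le \rho$, $\rho_n^{(m)} \le \rho_n$, and the lower bound will follow from the two claims: (i) $\rho^{(m)} \uparrow \rho$ as $m \to \infty$, and (ii) for each fixed $m$, $\rho_n^{(m)} \parrow \rho^{(m)}$.

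Claim (ii) is the easier of the two: $\mathcal{Z}^f(A^{(n)},B^{(n)},\mathcal{I}_m)$ has finitely many types, and the offspring distribution of each type depends continuously on the mixing distributions $A^{(n)}, \tilde A^{(n)}, B^{(n)}, \tilde B^{(n)}$. Since $A^{(n)} \Rightarrow A$ and $B^{(n)} \Rightarrow B$ with $\mathbb{E}[A^{(n)}]\to\mathbb{E}[A]$, $\mathbb{E}[B^{(n)}]\to\mathbb{E}[B]$ all $\nu$-almost surely (by the SLLN applied to $L^{(n)}/n$ and $L'^{(n)}/n$), the size-biased variants converge as well, so each offspring distribution converges $\nu$-a.s., and then the standard continuity of the least non-negative fixed point of the probability generating functional of an irreducible supercritical Galton–Watson process gives $\rho_n^{(m)} \to \rho^{(m)}$ $\nu$-a.s., a fortiori in probability. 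Claim (i), which is the real obstacle, goes through Lemma~\ref{unilem}. The tilde survival probabilities $\tilde\rho^{(m)}(x)$ are non-decreasing in $m$, bounded above by $\tilde\rho(x)$, and so converge pointwise to some $\tilde\rho^\star(x) \le \tilde\rho(x)$. Passing to the limit in the fixed-point equation $\tilde\rho^{(m)} = \tilde\Phi_{\mathcal{I}_m}(\tilde\rho^{(m)})$, where the continuity of the local-epidemic functional $F(h;\mathcal{I}')(x)$ in its arguments under the monotone coupling of $\mathcal{I}_m \uparrow \mathcal{I}$ and $\tilde\rho^{(m)} \uparrow \tilde\rho^\star$ follows from bounded monotone convergence on a finite clique, shows that $\tilde\rho^\star$ satisfies~\eqref{survivetilde} for $\mathcal{I}$. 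The proof of Theorem~\ref{survandR0}, applied to $\mathcal{I}_m$, shows that $R_*(\mathcal{I}_m) > 1$ for all sufficiently large $m$, so $\tilde\rho^{(m)}(x) > 0$ for such $m$ and $\tilde\rho^\star \not\equiv 0$; Lemma~\ref{unilem} then forces $\tilde\rho^\star = \tilde\rho$, and integrating via~\eqref{survive} against $\mathcal{I}_m$ and using monotone convergence gives $\rho^{(m)} \uparrow \rho$. Combining (i) and (ii): given $\epsilon > 0$, fix $m$ with $\rho^{(m)} > \rho - \epsilon/2$, then $\rho_n \ge \rho_n^{(m)} > \rho - \epsilon$ with probability approaching $1$ under $\nu$, yielding $\liminf_n \rho_n \ge \rho$ in probability and completing the sandwich. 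The main obstacle is showing that the monotone limit $\tilde\rho^\star$ is not trapped at $0$ by a spurious fixed point of $\tilde\Phi_\mathcal{I}$; this is precisely what the uniqueness result of Appendix~\ref{appendix1} rules out.
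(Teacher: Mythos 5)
Your proposal is correct and follows essentially the same route as the paper's proof: the upper bound via Lemma~\ref{bplimits}, and the lower bound via a stochastically increasing, finite-support discretization of $\mathcal{I}$ (your $\mathcal{I}_m$ versus the paper's $\mathcal{I}^k$, an inessential difference), monotone convergence of the fixed-point functions, identification of the limit through Lemma~\ref{unilem}, and finite-type continuity in $n$ for fixed discretization level. If anything, you are more explicit than the paper on the one delicate point — ruling out that the monotone limit $\tilde\rho^\star$ is trapped at the zero solution, via supercriticality of $\mathcal{I}_m$ for large $m$ as in the proof of Theorem~\ref{survandR0} — which the paper's proof leaves implicit.
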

\begin{pf}
For every $k\in\mathbb{Z}_+$, define the random variable
\[
\mathcal{I}^k (\mathcal{I})= \cases{ 2^{-k} \bigl
\lfloor2^k \mathcal{I} \bigr\rfloor,&\quad  $\mbox{if } \mathcal{I} <
2^k$, \vspace*{2pt}
\cr
2^k, &\quad $\mbox{if } \mathcal{I}
\in[2^k,\infty)$, \vspace*{2pt}
\cr
\infty,& \quad$\mbox{if } \mathcal{I} =
\infty.$}
\]
That is, $\mathcal{I}^k$ is a random variable which can take only
finitely many values and for $j=0,1,\ldots,4^k-1$,
\[
\mathbb{P}\bigl(\mathcal{I}^k = j 2^{-k}\bigr) =
\mathbb{P} \bigl(\mathcal{I} \in \bigl[j 2^{-k},(j+1) 2^{-k}\bigr)
\bigr),
\]
while
$\mathbb{P}(\mathcal{I}^k = 2^{k}) = \mathbb{P}(\mathcal{I} \in[2^k,
\infty))$ and $\mathbb{P}(\mathcal{I}^k = \infty) = \mathbb{P}(\mathcal
{I} = \infty)$.
It is clear that $\mathcal{I}^k \Rightarrow\mathcal{I}$ as $k \to
\infty$ and that $\mathcal{I}^k$ is stochastically smaller than
$\mathcal{I}^{k+1}$ for all $k \in\mathbb{Z}_+$.

For nonnegative random variables $X$ and $Y$, the function $\tilde{\rho
}(X,Y,\mathcal{I}^k)$ is pointwise nondecreasing in $k$, since it is
the survival probability of a branching process and (stochastically)
increasing the distribution of the infectious periods, and thus also of
the offspring distribution, cannot decrease the survival probability of
the process. By monotonicity we have that $\lim_{k \to\infty} \tilde
{\rho}(X,Y,\mathcal{I}^k)$ exists pointwise, and by the monotone
convergence theorem this limit satisfies \eqref{survivetilde} for
$\tilde{\rho}(X,Y,\mathcal{I})$.
By Lemma \ref{bplimits} we know that for every $k \in\mathbb{N}$,
$\mathbb{P}_{\omega}(|\mathcal{Z}^{(n)}| > k) \parrow\mathbb
{P}(|\mathcal{Z}^f| > k)$. This implies that for every $\varepsilon>0$ and
$\delta>0$, there exists $N_0 \in\mathbb{N}$ such that for $n>N_0$, we
have 
%
%
\begin{equation}
\label{rholims} \nu \bigl(\rho\bigl(A^{(n)},B^{(n)},\mathcal{I}
\bigr) < \rho(A,B,\mathcal{I}) + \varepsilon \bigr) > 1-\delta/2.
\end{equation}
Furthermore,
for every $\varepsilon>0$, there exists $K \in\mathbb{N}$ such that for
$k>K$, we have
\[
\rho\bigl(A,B,\mathcal{I}^k\bigr) > \rho(A,B,\mathcal{I}) -
\varepsilon/2.
\]
Similarly,
for every $\varepsilon>0$, $\delta>0$ and $k \in\mathbb{N}$, there exist
$N_k \in\mathbb{N}$ such that for $n>N_k$, we have
\[
\nu \bigl(\rho\bigl(A^{(n)},B^{(n)},\mathcal{I}^k
\bigr) > \rho\bigl(A,B,\mathcal{I}^k\bigr) - \varepsilon/2 \bigr) > 1-
\delta/2,
\]
while for every $k \in\mathbb{N}$ (and $\omega\in\Omega$), $\rho
(A^{(n)},B^{(n)},\mathcal{I}) \geq\rho(A^{(n)},B^{(n)},\mathcal
{I}^k)$. Combining these statements establishes that,
for every $\varepsilon>0$ and $\delta>0$, there exists $N\in\mathbb{N}$
such that for all $n>N$, we have
\[
\nu \bigl(\rho\bigl(A^{(n)},B^{(n)},\mathcal{I}\bigr) >
\rho(A,B,\mathcal{I}) - \varepsilon \bigr) > 1-\delta/2.
\]
Combining this with \eqref{rholims} completes the proof of the lemma.
\end{pf}

In order to prove Theorem \ref{finalsizethm}, we investigate the
susceptibility sets of two vertices chosen uniformly at random in the
subgraph $\hat{G}^{(n)}$ (of $G^{(n)}$), which is defined as follows.
Let $\hat{\mathbb{A}}^{(n)}$ be constructed from $\mathbb{A}^{(n)}$ by
ignoring all vertices in $V^{(n)}$ and $V'^{(n)}$ that have weights
larger than\vadjust{\goodbreak} $\log n$ and ignoring all edges that are incident to such
vertices. The graph $\hat{G}^{(n)}$ is constructed from $\hat{\mathbb
{A}}^{(n)}$ in the same way that $G^{(n)}$ is constructed from $\mathbb
{A}^{(n)}$.

We can create a realisation of $\hat{\mathbb{A}}^{(n)}$ as follows.
Define the vertex sets $\hat{V}^{(n)} = (v_i \in V^{(n)}\dvtx A_i \leq\log
n)$ and $\hat{V}'^{(n)} = (v'_j \in V'^{(n)}\dvtx B_j \leq\log n)$.
Conditional upon the weights of the vertices in $\mathbb{A}^{(n)}$, (i)
vertices $v_i \in\hat{V}^{(n)}$ and $v'_j \in\hat{V}'^{(n)}$ share in
$\hat{\mathbb{A}}^{(n)}$ a $\Poisson(A_iB_j/(\mu n))$ number of edges,
and (ii) the number of edges between distinct pairs of vertices are
independent. Let
\begin{eqnarray*}
\hat{L}^{(n)} & = &\sum_{i\dvtx v_i \in\hat{V}^{(n)}}
A_i \quad\mbox{and}
\\
\hat{L}'^{(n)} & =& \sum_{j:v'_j \in\hat{V}'^{(n)}}
B_j.
\end{eqnarray*}
Then the degree of vertex $v_i \in\hat{V}^{(n)}$ in $\hat{\mathbb
{A}}^{(n)}$ is $\Poisson(A_i \hat{L}'^{(n)}/(\mu n))$, and the degree
of $v'_j \in\hat{V}'^{(n)}$ is $\Poisson(B_j \hat{L}^{(n)}/(\mu n))$.
We construct from $\hat{\mathbb{A}}^{(n)}$ an identically distributed
copy of $\mathbb{A}^{(n)}$ by adding the vertices from $V^{(n)}
\setminus\hat{V}^{(n)}$ and
$V'^{(n)} \setminus\hat{V}'^{(n)}$ and, if $v_i \in V^{(n)}$ and $v'_j
\in V'^{(n)}$ are not both in $\hat{\mathbb{A}}^{(n)}$, letting $v_i$
and $v'_j$ share a $\Poisson(A_iB_j/(\mu n))$ number of newly-added
edges, independently of the number of edges between other vertices.

We construct a coupling of two independent branching processes and the
susceptibility sets of $v_1$ and $v_2$ in $\hat{G}^{(n)}$ (which by
exchangeability is equivalent to choosing two distinct vertices
uniformly at random), assuming that $A_1,A_2 \leq\log n $. We
therefore define [cf. equations \eqref{intermedA}--\eqref{BnDef}] $\hat
{A}^{(n)}_i = A_i   \ind(A_i \leq\log n) \hat{L}'^{(n)}/(\mu n)$ and
$\hat{B}^{(n)}_i = B_i   \ind(B_i \leq\log n) \hat{L}^{(n)}/(\mu n)$,
and let\break  $\hat{c}_A^{(n)} = \sum_{i=1}^n \ind(A_i \leq\log n)$ and $\hat
{c}_B^{(n)} = \sum_{i=1}^{\lfloor\alpha n\rfloor} \ind(B_i \leq\log
n)$. The random variables $\hat{A}^{(n)}$ and $\hat{B}^{(n)}$ are
defined by
\begin{eqnarray*}
\mathbb{P}_\omega\bigl(\hat{A}^{(n)} \leq x\bigr) & =& \bigl|\bigl
\{1 \leq i \leq\hat {c}_A^{(n)}\dvtx \hat{A}_i^{(n)}
\leq x\bigr\}\bigr| / \hat{c}_A^{(n)} \qquad(x\geq 0)\quad \mbox{and}
\\
\mathbb{P}_\omega\bigl(\hat{B}^{(n)} \leq x\bigr) & =& \bigl|\bigl
\{1 \leq i \leq\hat {c}_B^{(n)}\dvtx \hat{B}_i^{(n)}
\leq x\bigr\}\bigr| / \hat{c}_B^{(n)}\qquad (x\geq0).
\end{eqnarray*}
The processes through which the construction of the susceptibility set
of $v_i$ ($i \in\{1,2\}$) takes place are denoted by
\[
\hat{\mathcal{S}}^i = \hat{\mathcal{S}}^i\bigl(
\hat{A}^{(n)},\hat {B}^{(n)},\mathcal{I}\bigr) = \bigl(\hat{
\mathcal{S}}^i_j, j\in\mathbb{Z}_+\bigr).
\]
The two independent branching processes are $\mathcal{Z}^{b,i} =
\mathcal{Z}^{b,i}(\hat{A}^{(n)},\hat{B}^{(n)},\mathcal{I})$, for $i \in
\{1,2\}$, where $\hat{A}^{(n)}$ and $\hat{B}^{(n)}$ are as above.
The corresponding susceptibility set processes in $G^{(n)}$ are denoted
by $\mathcal{S}^i$ for $i \in\{1,2\}$. When no confusion is possible,
we sometimes suppress the reference to the starting vertex $i$.

We compute the probability that the susceptibility sets of two vertices
in $\hat{G}^{(n)}$ survive until at least generation
\[
t_n = \lceil\log\log n\rceil.
\]
We show that, with probability tending to $1$ as $n \to\infty$, if it
survives, the total number of individuals in the branching process
$\mathcal{Z}^{b}(\hat{A}^{(n)},\hat{B}^{(n)},\mathcal{I})$ in
generations $0,1,\ldots,t_n$ is of order $O(n^\varepsilon)$, for any
$\varepsilon> 0$. Then a standard coupling argument shows that, again
with probability tending to $1$ as $n \to\infty$, the susceptibility
process $\hat{\mathcal{S}}$ and its approximating branching process
$\mathcal{Z}^{b}(\hat{A}^{(n)},\hat{B}^{(n)},\mathcal{I})$ coincide
over generations $0,1,\ldots,t_n$; see the start of the proof of
Lemma \ref{sizesus}, which shows that for
large $n$, if the susceptibility set process survives until generation
$t_n$, its size will then be of order $O((\log n)^c)$, for some $c>0$.

Next, we show that, given any $\varepsilon>0$, there exists $K \in\mathbb
{N}$ such that the probability that both the $t_n$th generation of an
individual's susceptibility set is empty on $\hat{G}^{(n)}$ \emph{and}
the total size of its susceptibility set on $G^{(n)}$ exceeds $K$ is
less than $\varepsilon$ for all sufficiently large $n$; see Lemma \ref{lemignorehat}.
We then explore the forward process in $G^{(n)}$, where we ignore the
vertices and cliques already explored in the two backward processes. We
show that if the epidemic size is not $\Theta(1)$, then, with
probability tending to 1 as $n\to\infty$, it is $\Theta(n)$. After this
we attempt to connect the forward process with the generation $t_n$
vertices of the backward processes and show that, in the event of a
large outbreak, the probability that at least 1 of the vertices in
generation $t_n$ of a susceptibility set (if this generation is not
empty) is ultimately recovered converges to 1 as $n \to\infty$.

We use the following lemmas.
%
\begin{lemma}\label{Stirling}
Let $0 < \varepsilon< 3/\mathrm{e} -1$. For $k \in\mathbb{N}$,
let $(X_i(k),i\in\mathbb{N})$ be a sequence of i.i.d. $\Poisson
((1+\varepsilon)\log k)$ random variables. Then, for every $C>0$,
\[
\mathbb{P}\Bigl(\max_{1 \leq i \leq\lfloor C k \rfloor} X_i(k) \leq3 \log k
\Bigr) \to1 \qquad\mbox{as $k \to\infty$.}
\]
\end{lemma}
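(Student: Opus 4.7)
The plan is to apply a union bound and then a Poisson tail estimate via Stirling's formula, which explains both the name and the numerical hypothesis $\epsilon < 3/e - 1$. By the union bound it suffices to show that
$$\lfloor Ck \rfloor \cdot \mathbb{P}(X_1(k) > 3 \log k) \to 0 \qquad \text{as } k \to \infty.$$
Setting $\lambda = (1+\epsilon)\log k$ and $x_k = \lceil 3 \log k \rceil$, the ratio $\lambda / x_k$ converges to $(1+\epsilon)/3 < 1$, so for $k$ large enough the terms of the Poisson series $\sum_{j \ge x_k} \lambda^j/j!$ decrease at least geometrically with ratio bounded above by some $q < 1$. Hence
$$\mathbb{P}(X_1(k) \ge x_k) \le \frac{1}{1-q} \cdot \frac{e^{-\lambda} \lambda^{x_k}}{x_k!}.$$

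Next I would apply Stirling's inequality $x_k! \ge \sqrt{2\pi x_k}\,(x_k/e)^{x_k}$ to obtain
$$\frac{e^{-\lambda} \lambda^{x_k}}{x_k!} \le \frac{e^{-\lambda}}{\sqrt{2\pi x_k}} \left(\frac{e\lambda}{x_k}\right)^{x_k} \le \frac{k^{-(1+\epsilon)}}{\sqrt{2\pi x_k}} \cdot \beta^{3\log k},$$
where $\beta := e(1+\epsilon)/3$. The key observation is that $\beta < 1$ precisely when $\epsilon < 3/e - 1$, so $\beta^{3\log k} = k^{3\log\beta}$ with $3\log\beta < 0$. Plugging this in gives
$$\mathbb{P}(X_1(k) \ge x_k) = O\Bigl(k^{-(1+\epsilon) + 3\log\beta}/\sqrt{\log k}\Bigr),$$
and multiplying by $\lfloor Ck \rfloor$ yields an overall bound of the form $O(k^{-\epsilon + 3\log\beta}/\sqrt{\log k})$. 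The exponent $-\epsilon + 3\log\beta$ is strictly negative because both summands are, so the whole expression tends to zero.

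There is no substantial obstacle here; the only point requiring care is the role of the hypothesis $\epsilon < 3/e-1$, which is exactly what makes the base $\beta$ strictly less than $1$ and hence gives the Poisson tail a polynomial decay rate fast enough to absorb the $Ck$ factor from the union bound. In particular, as $\epsilon \uparrow 3/e - 1$ one has $\beta \uparrow 1$, so the argument becomes tight at the stated threshold.
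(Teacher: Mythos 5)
Your proof is correct and takes essentially the same route as the paper: both reduce the claim to a union bound (the paper uses the equivalent inequality $(1-p)^{\lfloor Ck\rfloor}\ge 1-Ckp$), bound the Poisson tail with a Stirling-type estimate $j!\ge (j/e)^j$ together with a geometric series, and conclude from exactly the exponent computation $3\log\bigl(e(1+\epsilon)/3\bigr)-\epsilon<0$, which is where the hypothesis $\epsilon<3/e-1$ enters in both arguments. The only cosmetic differences are that the paper applies the crude factorial bound termwise across the whole tail sum rather than isolating the first term, and it dispenses with the $\sqrt{2\pi x_k}$ factor, which is indeed not needed.
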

\begin{pf}
Since $\mathrm{e}^k = \sum_{i=0}^{\infty}k^i/i!$, we have $k!>k^k \mathrm{e}^{-k}$. Then
\begin{eqnarray*}
\mathbb{P}\bigl(X_1(k)> 3 \log k\bigr) & = & \sum
_{j=\lceil3\log k\rceil}^{\infty
} \frac{((1+\varepsilon)\log k)^j}{j!}\frac{1}{k^{1+\varepsilon}}
\\
& \leq& \frac{1}{k^{1+\varepsilon}}\sum_{j=\lceil3\log k\rceil}^{\infty
}
\frac{((1+\varepsilon)\log k)^j}{j^j \mathrm{e}^{-j}}
\\
& < & \frac{1}{k^{1+\varepsilon}}\sum_{j=\lceil3\log k\rceil}^{\infty}
\bigl((1+\varepsilon)\mathrm{e}/3\bigr)^{j}
\\
& < & \frac{3}{3-(1+\varepsilon)\mathrm{e}} k^{-1-\varepsilon+3(1+ \log
[1+\varepsilon]-\log3)}.
\end{eqnarray*}
The probability that none out of $\lfloor C k \rfloor$ independent
copies of $X_1(k)$ exceeds $3 \log k$ is thus given by
\begin{eqnarray*}
\bigl(1-\mathbb{P}\bigl(X_1(k)> 3 \log k\bigr)
\bigr)^{\lfloor Ck \rfloor} & > & \biggl(1-\frac{3}{3-(1+\varepsilon)\mathrm{e}} k^{-1-\varepsilon+3(1 + \log
[1+\varepsilon] -\log3)}
\biggr)^{C k}
\\
& > & 1 - C k \frac{3}{3-(1+\varepsilon)\mathrm{e}} k^{-1-\varepsilon+3(1+\log
[1+\varepsilon]-\log3 )}
\\
& = & 1- \frac{3C}{3-(1-\varepsilon)\mathrm{e}} k^{3(1 + \log[1+\varepsilon
]-\log3)-\varepsilon},
\end{eqnarray*}
which converges to 1 as $k \to\infty$, since $0 < \varepsilon< 3/\mathrm{e} -1$.
\end{pf}

Recall that the distance between two vertices in a graph is the number
of edges in the shortest path connecting those vertices.

\begin{lemma}\label{weightsus}
For $\nu$-almost all $\omega\in\Omega$, the probability that the
total number and the total weight of vertices within distance $2t_n$ of
the set $\{v_1,v_2\}$ in $\hat{\mathbb{A}}^{(n)}$ are both smaller than
$n^{1/3}$ converges to $1$ as $n\to\infty$.
\end{lemma}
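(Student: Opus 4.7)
The plan is to bound the growth of a breadth-first exploration from $\{v_1,v_2\}$ in $\hat{\mathbb{A}}^{(n)}$ by controlling the maximum degree uniformly and then iterating. Since $t_n=\lceil \log\log n\rceil$, after $2t_n$ steps the BFS can have expanded by at most roughly $(\log n)^{2t_n}=\exp(O((\log\log n)^2))$, which is much smaller than any polynomial in $n$; the truncation of weights at $\log n$ will then immediately convert the bound on the number of vertices into a bound on their total weight.

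First I would fix a small $\epsilon>0$ (say $\epsilon<3/\mathrm{e}-1$ so as to invoke Lemma~\ref{Stirling}). The strong law of large numbers gives that, for $\nu$-almost all $\omega$, $L^{(n)}/(\mu n)\to 1$ and $L'^{(n)}/(\mu n)\to 1$, and the same limits hold for $\hat{L}^{(n)}$ and $\hat{L}'^{(n)}$ because the contribution of vertices of weight exceeding $\log n$ is $o(n)$ (as noted just above equation~\eqref{bpcouple} in the proof of Lemma~\ref{fcoupling}). Hence, for such $\omega$ and all sufficiently large $n$, the degree of each $v_i\in\hat V^{(n)}$ in $\hat{\mathbb{A}}^{(n)}$ is stochastically dominated by $\mathrm{Poisson}((1+\epsilon)\log n)$ (and similarly for each $v'_j\in\hat V'^{(n)}$), since $A_i\leq\log n$ and $\hat{L}'^{(n)}/(\mu n)<1+\epsilon$.

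Next, I would apply Lemma~\ref{Stirling} with $k=n$ and $C=1+\alpha$ to conclude that, with $\mathbb{P}_\omega$-probability tending to $1$, every vertex in $\hat V^{(n)}\cup\hat V'^{(n)}$ has degree at most $3\log n$ in $\hat{\mathbb{A}}^{(n)}$. On this event, a BFS from $\{v_1,v_2\}$ explores at most
\begin{equation*}
N_n := 2\sum_{k=0}^{2t_n}(3\log n)^k \;\leq\; 4(3\log n)^{2t_n}
\end{equation*}
vertices within distance $2t_n$. Taking logarithms,
\begin{equation*}
\log N_n \leq \log 4+2\lceil\log\log n\rceil\bigl(\log 3+\log\log n\bigr) = O\bigl((\log\log n)^2\bigr) = o(\log n),
\end{equation*}
so $N_n=o(n^{\gamma})$ for every $\gamma>0$, in particular $N_n<n^{1/4}$ eventually. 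Since each vertex visited lies in $\hat V^{(n)}\cup\hat V'^{(n)}$ and therefore has weight at most $\log n$, the total weight of explored vertices is at most $N_n\log n=o(n^{1/3})$. Combining these two estimates gives both the number and total weight bounds simultaneously, with $\mathbb{P}_\omega$-probability tending to $1$ for $\nu$-almost all $\omega$.

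The only real subtlety is ensuring the uniform degree control holds for $\nu$-almost all $\omega$ rather than merely on average: this is why the truncation at $\log n$ is essential (it caps the Poisson parameter) and why Lemma~\ref{Stirling} is stated for a uniform bound over $\lfloor Ck\rfloor$ i.i.d.\ Poisson variables. Everything else is routine iteration of a deterministic degree bound along a BFS tree.
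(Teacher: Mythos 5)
Your proposal is correct and follows essentially the same route as the paper's own proof: stochastic domination of the (weight-truncated) degrees by $\Poisson((1+\epsilon)\log n)$ variables via the strong law for $\hat{L}^{(n)},\hat{L}'^{(n)}$, the uniform maximum-degree bound $3\log n$ from Lemma~\ref{Stirling}, and then iterating this bound over $2t_n$ BFS generations to get $O((3\log n)^{2t_n+1})=o(n^{1/3}/\log n)$ vertices, with the weight bound following from the $\log n$ cap on individual weights. The only differences are cosmetic (e.g.\ applying Lemma~\ref{Stirling} with $C=1+\alpha$ rather than citing the $n+\lfloor\alpha n\rfloor$ vertex count directly), so nothing further is needed.
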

\begin{pf}
All vertices in $\hat{\mathbb{A}}^{(n)}$ have weight at most $\log n$,
so their degrees in $\hat{\mathbb{A}}^{(n)}$ are stochastically
dominated by i.i.d. $\Poisson(\log n \max(\hat{L}^{(n)},\hat
{L}'^{(n)})/(\mu n))$ random variables. For every $\varepsilon>0$, we have
by the strong law of large numbers that $\ind(\max(\hat{L}^{(n)},\hat
{L}'^{(n)})/(\mu n) < 1+\varepsilon) \asarrows1$ as $n \to\infty$.
We know by Lemma \ref{Stirling} that, with probability tending to 1 as
$n\to\infty$, none of the at most $n + \lfloor\alpha n\rfloor$
vertices in $\hat{\mathbb{A}}^{(n)}$ has degree exceeding $3 \log n$.
Thus the number of vertices within graph distance $2t_n$ of $v_1$ and
$v_2$ is, with probability tending to 1 as $n\to\infty$, bounded above
by
\[
2 \sum_{k=1}^{2t_n} (3 \log
n)^{k} = O\bigl((3 \log n)^{2t_n+1}\bigr).
\]
Since $2t_n +1 = 2 \lceil\log\log n\rceil+1 < 2 \log\log n +3$, we have
\begin{eqnarray*}
(3 \log n)^{2t_n+1} & < & (3 \log n)^{3 + 2\log\log n}
\\
& = & (3 \log n)^3 \mathrm{e}^{2\log\log n(\log3 + \log\log n)} = o\bigl(n^{1/3}/
\log n\bigr),
\end{eqnarray*}
so the total weight of the vertices is $o(n^{1/3})$.
\end{pf}

For $i \in\{1,2\}$, let $K^i(t_n)$ be the set of vertices in $V^{(n)}$
within distance $2t_n$ of $v_i$ in $\hat{\mathbb{A}}^{(n)}$, and let
$K'^i(t_n)$ be the set of vertices in $V'^{(n)}$ within distance $2t_n$
of $v_i$ in $\hat{\mathbb{A}}^{(n)}$.
Lemma \ref{weightsus} implies that, with probability tending to 1 as
$n\to\infty$, none of the sets $K^1(t_n)$, $K^2(t_n)$, $K'^1(t_n)$ and
$K'^2(t_n)$ has total vertex or clique weight exceeding $n^{1/3}$.
Furthermore, with probability tending to 1 as $n\to\infty$, the total
number of vertices in $K^1(t_n)$ is less than $n^{1/3}$.
Conditioned on $K^2(t_n)$ having total weight less than $n^{1/3}$ and
$K^1(t_n)$ containing less than $n^{1/3}$ vertices, the probability
that $K^1(t_n)$ and $K^2(t_n)$ share an edge is bounded above by $1-
(1-n^{1/3}/\hat{L}_n)^{n^{1/3}} < n^{2/3}/\hat{L}_n$, which converges
$\nu$-almost surely to $0$ as $n \to\infty$. So, for $\nu$-almost all
$\omega\in\Omega$, the $\mathbb{P}_\omega$-probability that $K^1$ and
$K^2$ share a vertex converges to 0 as $n \to\infty$. Similarly, we
deduce that for $\nu$-almost all $\omega\in\Omega$, the $\mathbb
{P}_\omega$-probability that $K'^1$ and $K'^2$ share a clique converges
to 0 as $n \to\infty$.

Recall the definition of $R_*$ from \eqref{R*def} and write $R_*$ as
$R_*(A,B,\mathcal{I})$ to show explicitly its dependence on the
distributions of $A,B$ and $\mathcal{I}$.

\begin{lemma}\label{sizesus}
Suppose that $R_*>1$. Then, for $0< c < \log R_*$,
\[
\mathbb{P}_{\omega} \bigl(|\hat{\mathcal{S}}_{t_n}| > (\log
n)^{c} | |\hat{\mathcal{S}}_{t_n}| > 0 \bigr) \parrow1.
\]
\end{lemma}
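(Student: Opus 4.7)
The plan is to couple $\hat{\mathcal{S}}^i$ with the backward branching process $\mathcal{Z}^{b,i}(\hat{A}^{(n)},\hat{B}^{(n)},\mathcal{I})$ through generation $t_n$, and then to lower-bound that branching process by a bounded-offspring supercritical Galton--Watson process whose generation-$t_n$ size can be controlled via Kesten--Stigum. The hypothesis $c<\log R_*$ is used only to fix an auxiliary rate $r$ with $e^{c}<r<R_*$.

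For the coupling, Lemma~\ref{weightsus} ensures that for $\nu$-almost every $\omega$ the total weight of vertices within graph distance $2t_n$ of $v_i$ in $\hat{\mathbb{A}}^{(n)}$ is $o(n^{1/3})$ with $\mathbb{P}_\omega$-probability tending to $1$. As long as no vertex of $V^{(n)}$ or clique of $V'^{(n)}$ is revisited, $\hat{\mathcal{S}}^i$ is a realisation of $\mathcal{Z}^{b,i}(\hat{A}^{(n)},\hat{B}^{(n)},\mathcal{I})$. A birthday-type argument in the spirit of the proof of Lemma~\ref{fcoupling}, applied to the $o(n^{1/3})$ weight bound, then yields that the coupling persists through generation $t_n$ with conditional probability tending to $1$ in $\nu$-probability.

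Now fix $r$ with $e^c<r<R_*$. Since $R_*=\mathbb{E}[\tilde{A}]\,\mathbb{E}_{\check{Y}}[\mathbb{E}[S(\check{Y})|\check{Y}]]$ and $S(m)$ is stochastically non-decreasing in $m$, monotone convergence produces integers $k_1,k_2$ so that the truncated offspring
\begin{equation*}
\tilde{C}^{b,\mathrm{tr}}=\sum_{i=1}^{\check{X}\wedge k_1} S(\check{Y}_i\wedge k_2)
\end{equation*}
has mean exceeding $r$ and is almost surely bounded by $k_1k_2$. Using $\hat{A}^{(n)}\Rightarrow A$, $\hat{B}^{(n)}\Rightarrow B$ for $\nu$-a.e.\ $\omega$ together with the induced convergences of size-biased and mixed-Poisson variants recorded in Section~\ref{RIGs}, the analogously truncated offspring in $\mathcal{Z}^{b,(n),i}$ has mean above $r$ for all sufficiently large $n$ on a set of $\omega$ of full $\nu$-measure, and is still bounded by $k_1k_2$. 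Hence $\mathcal{Z}^{b,(n),i}$ stochastically dominates a Galton--Watson process $\mathcal{W}^{(n)}$ with uniformly bounded offspring and mean exceeding $r$, to which the Kesten--Stigum theorem applies: $|\mathcal{W}^{(n)}_t|/r^{t}\to W^{(n)}$ almost surely, with $W^{(n)}>0$ on the survival event. Since $r^{t_n}=(\log n)^{\log r}$ and $\log r>c$, the two probabilities $\mathbb{P}(|\mathcal{W}^{(n)}_{t_n}|>(\log n)^c)$ and $\mathbb{P}(|\mathcal{W}^{(n)}_{t_n}|>0)$ share a common positive limit (the limiting survival probability), so their ratio tends to $1$; combined with the coupling of Step~1 this gives the claim.

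The main obstacle is making this Kesten--Stigum growth statement uniform in $n$, since the distribution of $\mathcal{W}^{(n)}$ depends on $\omega$ and $n$. Fixing $k_1,k_2$ independently of $n$ is the key point: it gives uniform offspring bounds and a uniform lower bound on the mean, so that the survival probability and the limit $W^{(n)}$ are tight and bounded away from degenerate values as $n\to\infty$, which is enough to push through the conditional convergence above and also to handle the quenched nature of the estimates.
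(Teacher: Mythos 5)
Your overall architecture---couple $\hat{\mathcal{S}}^i$ to $\mathcal{Z}^{b}(\hat{A}^{(n)},\hat{B}^{(n)},\mathcal{I})$ through generation $t_n$, then minorize by a supercritical Galton--Watson process whose generation-$t_n$ size is forced above $(\log n)^c$ because ${\rm e}^c<r<R_*$---is the same as the paper's, and your coupling step matches the paper's (which likewise invokes Lemma~\ref{weightsus} and the argument of Lemma~\ref{fcoupling}). The genuine gap is in the final transfer step. Your truncated process $\mathcal{W}^{(n)}$, built with $k_1,k_2$ fixed once and for all, has limiting survival probability $\rho^{\mathrm{tr}}$ which is in general \emph{strictly} smaller than $\rho^b=\rho^b(A,B,\mathcal{I})$ (whenever the truncation bites, e.g.\ when $A$ or $B$ is unbounded). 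Your domination therefore yields at best $\mathbb{P}_{\omega}(|\hat{\mathcal{S}}^i_{t_n}|>(\log n)^c)\ge\mathbb{P}_{\omega}(|\mathcal{W}^{(n)}_{t_n}|>(\log n)^c)\to\rho^{\mathrm{tr}}$, while the conditioning event satisfies $\mathbb{P}_{\omega}(|\hat{\mathcal{S}}^i_{t_n}|>0)\to\rho^b$; so you obtain only $\liminf$ of the conditional probability $\ge\rho^{\mathrm{tr}}/\rho^b<1$, not convergence to $1$. On the event that $\hat{\mathcal{S}}^i$ is alive at generation $t_n$ but $\mathcal{W}^{(n)}$ has died (asymptotic probability $\rho^b-\rho^{\mathrm{tr}}>0$), the minorization says nothing about $|\hat{\mathcal{S}}^i_{t_n}|$; the sentence ``the two probabilities \dots\ share a common positive limit \dots\ combined with the coupling this gives the claim'' conflates conditioning on survival of $\mathcal{W}^{(n)}$ with conditioning on $|\hat{\mathcal{S}}^i_{t_n}|>0$. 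This is repairable---for each $\epsilon>0$ choose $k_1(\epsilon),k_2(\epsilon)$ so large that $\rho^{\mathrm{tr}}>\rho^b-\epsilon$ in addition to the mean exceeding $r$, conclude $\liminf\ge(\rho^b-\epsilon)/\rho^b$, and let $\epsilon\downarrow0$ (identifying $\lim_n\mathbb{P}_{\omega}(|\mathcal{W}^{(n)}_{t_n}|>0)$ via the analogue of~\cite[Lemma~4.1]{Brit07})---but you did not make this move, and with $k_1,k_2$ fixed the claimed conclusion is false as a deduction.

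A second, smaller, weakness is that the uniformity-in-$n$ issue you correctly flag is resolved by assertion rather than proof: Kesten--Stigum (stated, incidentally, with the wrong normalization---the martingale limit is of $|\mathcal{W}^{(n)}_t|/m_n^t$ with $m_n$ the mean, not of $|\mathcal{W}^{(n)}_t|/r^t$, though since $m_n>r$ this direction only helps) is an asymptotic statement as $t\to\infty$ for each fixed offspring law, and tightness plus nondegeneracy of the limits $W^{(n)}$ does not by itself give the diagonal statement $\mathbb{P}(0<|\mathcal{W}^{(n)}_{t_n}|\le(\log n)^c)\to0$; one needs a lower-deviation estimate uniform over the family of offspring laws (achievable with bounded support via a compactness/continuity argument, but it is the crux and is missing). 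It is instructive to compare with how the paper sidesteps both problems at once: it defines $\hat{A}^{(n)}_*$ by $\mathbb{P}_{\omega}(\hat{A}^{(n)}_*\le x)=\sup_{i\ge n}\mathbb{P}_{\omega}(\hat{A}^{(i)}\le x)$ (similarly $\hat{B}^{(n)}_*$), obtaining a minorizing family that is stochastically \emph{increasing} in $n$ and converges to the true $A$ and $B$; hence $R_*(\hat{A}^{(n)}_*,\hat{B}^{(n)}_*,\mathcal{I})$ eventually exceeds ${\rm e}^c$, the monotonicity tames the $n$-dependence so that the growth theorem~\cite[Theorem~2.7.1]{Jage75} applies, and---crucially---the survival-to-$t_n$ probability of the minorizing process converges to $\rho^b$ itself by~\cite[Lemma~4.1]{Brit07}, closing the sandwich exactly where your fixed truncation leaves the gap $\rho^b-\rho^{\mathrm{tr}}$.
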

\begin{pf}
First note that, since all vertices\vspace*{1pt} in $\hat{\mathbb{A}}^{(n)}$ have
weight $\le\log n$, the number of offspring of any individual in the
branching process $\mathcal{Z}^{b}(\hat{A}^{(n)},\break \hat{B}^{(n)},\mathcal
{I})$ is stochastically smaller than the product of two independent
$\Poisson(\log n)$ random variables.
Thus, a simple argument using Markov's inequality shows that the total
number of individuals in generations $0,1,\ldots,t_n$ of the branching
process $\mathcal{Z}^{b}(\hat{A}^{(n)},\hat{B}^{(n)},\mathcal{I})$ is
$O_{p_{\nu}}([(\log n)^2]^{\log\log n +1+\delta})$ for any $\delta>0$,
and hence $O_{p_{\nu}}(n^\varepsilon)$ for any $\varepsilon>0$.
Therefore, by choosing $\varepsilon<\frac{1}{3}$ (so that $2\varepsilon+\frac
{1}{3}<1$) and using Lemma~\ref{weightsus}, a standard coupling
argument, similar to that used in the proof of
Lemma \ref{fcoupling}, shows that with probability tending to $1$ as $n
\to\infty$, the susceptibility set process $\hat{\mathcal{S}}$ and the
branching process $\mathcal{Z}^{b}(\hat{A}^{(n)},\hat{B}^{(n)},\mathcal
{I})$ coincide over generations $0,1,\ldots,t_n$.
Thus, in proving Lemma \ref{sizesus}, we can replace $\hat{\mathcal
{S}}$ by $\mathcal{Z}^{b}(\hat{A}^{(n)},\hat{B}^{(n)},\mathcal{I})$.

For $n \in\mathbb{N}$, let $\hat{A}_*^{(n)}$ be a random variable
having distribution function given by
\[
\mathbb{P}_\omega\bigl(\hat{A}_*^{(n)} \le x\bigr) = \sup
_{i \geq n} \mathbb {P}_\omega\bigl(\hat{A}^{(i)}
\le x\bigr)\qquad (x \in\mathbb{R})
\]
and define $\hat{B}_*^{(n)}$ similarly.
Observe that $\hat{A}_*^{(n)} \Rightarrow A$ and $\hat{B}_*^{(n)}
\Rightarrow B$ as $n \to\infty$. Furthermore, for all $n \in\mathbb
{N}$, $\hat{A}_*^{(n)}$ (resp., $\hat{B}_*^{(n)}$) is stochastically
dominated by $\hat{A}_*^{(n+1)}$ (resp., $\hat{B}_*^{(n+1)}$).
Therefore $R_*(\hat{A}_*^{(n)},\hat{B}_*^{(n)},\mathcal{I})$ is also
stochastically increasing in $n$.
By the Skorokhod representation theorem (Theorem 7.2.14 of \cite{Grim92})
and the monotone convergence theorem we have that
\[
R_*\bigl(\hat{A}_*^{(n)},\hat{B}_*^{(n)},\mathcal{I}\bigr)
\parrow R_*(A,B,\mathcal{I}).
\]
In particular, there exists $N=N(\omega)$ such that
$R_*(\hat{A}_*^{(n)},\hat{B}_*^{(n)},\mathcal{I}) > \mathrm{e}^c$, for
every $n>N$.
So, by Theorem 2.7.1 of \cite{Jage75}, it follows that
\[
\mathbb{P}_{\omega}\bigl(\bigl|\mathcal{Z}_{t_n}^{b}
\bigl(\hat{A}_*^{(n)},\hat {B}_*^{(n)},\mathcal{I}\bigr)\bigr| > (
\log n)^{c}\bigr)- \mathbb{P}_{\omega}\bigl(\bigl|
\mathcal{Z}_{t_n}^{b}\bigl(\hat{A}_*^{(n)},\hat
{B}_*^{(n)},\mathcal{I}\bigr)\bigr| > 0\bigr) \parrow0.
\]
The second probability in this expression converges to $\rho
^b(A,B,\mathcal{I})$ by Lem\-ma~4.1 of \cite{Brit07} and the lemma then
follows by observing that $|\mathcal{Z}_{t_n}^{b}(\hat{A}_*^{(n)},\break \hat
{B}_*^{(n)},\mathcal{I})|$ is stochastically smaller than $|\mathcal
{Z}_{t_n}^{b}(\hat{A}^{(n)},\hat{B}^{(n)},\mathcal{I})|$.
\end{pf}

Up to now, we have investigated the behavior of the susceptibility sets
of vertices in $\hat{G}^{(n)}$. This is only an intermediate step
before analyzing susceptibility sets in $G^{(n)}$. To make the
connection between the two graphs, we use the following two lemmas.
%
\begin{lemma}\label{hatdoesntmatter}
For $k \in\mathbb{N}$,
\[
\mathbb{P}_{\omega}\bigl(\bigl|\hat{\mathcal{S}}\bigl(\hat{A}^{(n)},
\hat {B}^{(n)},\mathcal{I}\bigr)\bigr|=k\bigr) - \mathbb{P}_{\omega}
\bigl(\bigl|\mathcal {S}\bigl(A^{(n)},B^{(n)},\mathcal{I}\bigr)\bigr|=k
\bigr) \parrow0.
\]
\end{lemma}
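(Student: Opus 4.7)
The plan is to couple the susceptibility-set explorations of a common initial vertex $v_1$ on $G^{(n)}$ and on $\hat{G}^{(n)}$ via a joint Remark~\ref{remark2.2}-style construction, and to show that the coupling succeeds with $\nu$-probability tending to one on the event $\{|\mathcal{S}|=k\}$. Because $\hat{G}^{(n)}$ is obtained from $G^{(n)}$ by deleting all vertices and cliques of weight exceeding $\log n$ together with their incident edges, one automatically has $\hat{\mathcal{S}} \subseteq \mathcal{S}$ throughout the exploration. Let $E_n$ denote the good event that no pick, either of a clique from $V'^{(n)}$ or of a vertex from $V^{(n)}$, performed during the exploration of $\mathcal{S}$ lands on an element of weight exceeding $\log n$. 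On $E_n$ the two explorations remain identical, so $\mathcal{S}=\hat{\mathcal{S}}$ and in particular $\{|\mathcal{S}|=k\} \cap E_n = \{|\hat{\mathcal{S}}|=k\} \cap E_n$.

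The first technical input I would establish is a bound on the probability that any single pick hits a high-weight element. Since $\mathbb{E}[A]<\infty$, dominated convergence gives $\mathbb{E}[A\ind(A>N)] \to 0$ as $N\to\infty$, and the strong law of large numbers then yields
\[
\sum_{i=1}^n A_i \ind(A_i > \log n) \;=\; o(L^{(n)}) \qquad \nu\text{-a.s.,}
\]
with an analogous statement for the $B$-weights and $L'^{(n)}$. Hence in the construction any individual pick lands on a vertex of weight exceeding $\log n$ (respectively a clique of weight exceeding $\log n$) with $\nu$-a.s.\ probability $o(1)$.

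The second input is a tightness statement for the total number of picks. Let $M_n$ be the total number of picks, clique picks plus vertex picks, made during the exploration of $\mathcal{S}$. By an argument analogous to the proof of Lemma~\ref{bplimits}, applied to the backward direction, the joint law of $(|\mathcal{S}|,M_n)$ converges, in $\nu$-probability, to the joint law of the corresponding quantities in the backward branching process $\mathcal{Z}^b(A,B,\mathcal{I})$; in that limit, conditional on $|\mathcal{Z}^b|=k$, the number of picks is a proper random variable, since each vertex spawns an $\mathcal{MP}(\tilde{A})$ number of cliques each of $\mathcal{MP}(\tilde{B})+1$ members, with finite means. Therefore, for every $\epsilon>0$ one can choose $M=M(\epsilon,k)<\infty$ such that $\mathbb{P}_{\omega}(|\mathcal{S}|=k,\,M_n>M)<\epsilon$ with $\nu$-probability tending to $1$.

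Combining these two inputs, a union bound over the at most $M$ picks gives $\mathbb{P}_{\omega}(E_n^c,\,|\mathcal{S}|=k,\,M_n\le M)\parrow 0$, so together with the tightness $\mathbb{P}_{\omega}(E_n^c,\,|\mathcal{S}|=k)\parrow 0$; the symmetric argument with $\mathcal{S}$ replaced by $\hat{\mathcal{S}}$ and $M_n$ by its analogue yields $\mathbb{P}_{\omega}(E_n^c,\,|\hat{\mathcal{S}}|=k)\parrow 0$. Since $\{|\mathcal{S}|=k\}$ and $\{|\hat{\mathcal{S}}|=k\}$ coincide on $E_n$, the lemma follows. The main obstacle is the tightness step, since a priori conditioning on the finite event $\{|\mathcal{S}|=k\}$ could still allow $M_n$ to blow up with $n$; closing this off requires the convergence of the bipartite exploration structure to a proper limiting backward branching process, uniformly in the weight environment outside a $\nu$-null set.
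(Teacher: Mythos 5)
Your proposal is correct, but its core estimate is genuinely different from the paper's. The overall skeleton is shared: the paper also reduces the lemma to a tightness statement, proving the joint version $\mathbb{P}_{\omega}(|\hat{\mathcal{S}}|=k,|\hat{\mathcal{S}}'|=l)-\mathbb{P}_{\omega}(|\mathcal{S}|=k,|\mathcal{S}'|=l)\parrow 0$ and then removing the conditioning on the number $l$ of explored cliques by Lemma~\ref{bplimits}-type arguments, which is exactly the role your tightness of the pick count $M_n$ plays (and, as you note, this step is available without circularity, since the backward analogues of Lemmas~\ref{bplimits} and~\ref{fcoupling} are already invoked by the paper, e.g.\ in Lemma~\ref{sizesus}, independently of the present lemma). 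Where you diverge is in how the discrepancy event is killed. The paper works with the edge-adding coupling of $\mathbb{A}^{(n)}$ from $\hat{\mathbb{A}}^{(n)}$ and bounds the \emph{expected number of newly-added edges} incident to the explored set, $F^{(n)}_k=\hat{L}^{(n)}_{(k)}(L'^{(n)}-\hat{L}'^{(n)})/(\mu n)$. Because this quantity is proportional to the explored \emph{weight}, the paper needs two extra devices: explicit rate sequences $\xi(n),\xi'(n)\to\infty$ with $L^{(n)}-\hat{L}^{(n)}<(\xi(n))^{-1}L^{(n)}$ with high probability, and a bootstrap bound $\hat{L}^{(n)}_{(k)}<(\xi'(n))^{1/2}$ on the event $\{|\hat{\mathcal{S}}|=k,|\hat{\mathcal{S}}'|=l\}$, justified by the self-consistency observation that a heavy explored set would keep extending. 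Your per-pick union bound sidesteps both devices: since picks are size-biased, each individual pick lands in the heavy part with probability exactly $(L'^{(n)}-\hat{L}'^{(n)})/L'^{(n)}$ (clique picks) or $(L^{(n)}-\hat{L}^{(n)})/L^{(n)}$ (member picks), \emph{uniformly in the picker}, so tightness of the number of picks suffices and no control of explored weight, nor any rate extraction, is needed. Both arguments ultimately rest on the same input, namely that $\mathbb{E}[A],\mathbb{E}[B]<\infty$ forces the heavy weight fraction to vanish $\nu$-almost surely; yours is the more elementary packaging of it.

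One step of your sketch needs a sentence of expansion. The event $E_n$ is defined via picks of the $\mathcal{S}$-exploration, whose number is \emph{not} controlled on $\{|\hat{\mathcal{S}}|=k\}$, and picks of the hatted exploration never land on heavy elements, so the bound $\mathbb{P}_{\omega}(E_n^c,|\hat{\mathcal{S}}|=k)\parrow 0$ is not literally obtained by ``symmetry''. The fix is immediate within your coupling: up to the first heavy-landing pick the two explorations coincide, so on $\{|\hat{\mathcal{S}}|=k,\,\hat{M}_n\le M\}$ (with $\hat{M}_n$ the hatted pick count, tight by the same branching-process comparison) any heavy landing must occur among the first $M+1$ picks of the $\mathcal{S}$-exploration, and the same $(M+1)\cdot o(1)$ bound applies. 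With that observation supplied, your argument is complete and yields the lemma.
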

\begin{pf}
In order to simplify the notation we suppress the explicit dependence
on $\hat{A}^{(n)}$, $\hat{B}^{(n)}$ and $\mathcal{I}$. We denote by
$\mathcal{S}'^i$ the set of cliques containing vertices in the
susceptibility set $\mathcal{S}^i$. We prove that,
for all $k,l\in\mathbb{N}$,
%
%
\begin{equation}
\mathbb{P}_{\omega}\bigl(|\hat{\mathcal{S}}|=k, \bigl|\hat{
\mathcal{S}}'\bigr|=l\bigr) - \mathbb{P}_{\omega}\bigl(|
\mathcal{S}|=k,\bigl|\mathcal{S}'\bigr|=l\bigr) \parrow0, \label{SS'sameProbs}
\end{equation}
from which the lemma follows using similar arguments to those in the
proof of Lemma \ref{bplimits}, which are not repeated here.

Recall that we can construct $G^{(n)}$ from $\hat{G}^{(n)}$, by
considering the vertices in $V^{(n)}\setminus\hat{V}^{(n)}$ and
$V'^{(n)}\setminus\hat{V}'^{(n)}$ and then connecting them in the
usual way with each other and with vertices in $V^{(n)}$ and $V'^{(n)}$
to obtain $\mathbb{A}^{(n)}$. As in the proof of Lemma~\ref{fcoupling},
$\mu< \infty$ implies that
\[
\sum_{i=1}^n A_i \ind
\bigl(A_i > \log \lfloor n\rfloor \bigr) = L^{(n)} - \hat{L}^{(n)}
= o(n)\qquad \mbox{$\nu$-almost surely}.
\]
Therefore,
\[
\frac{L^{(n)} - \hat{L}^{(n)}}{L^{(n)}} \asarrows0\qquad \mbox{as $n \to\infty$}.
\]
This implies that $1 - \hat{L}^{(n)}/L^{(n)}$ converges in probability
to 0. In particular there is an increasing sequence of natural numbers
$(p_i, i \in\mathbb{N})$, such that for all $n > p_i$, we have $\nu(1
- \hat{L}^{(n)}/L^{(n)} < 4^{-i})>1-2^{-i}$. Define the function $\xi\dvtx
\mathbb{N} \to\mathbb{N}$ by $\xi(n) = 2^i$ if $ p_i \leq n <
p_{i+1}$. This function increases to infinity and
\[
\ind \bigl(L^{(n)} - \hat{L}^{(n)}< \bigl(\xi(n)
\bigr)^{-1}L^{(n)} \bigr) \parrow1. 
\]
Similarly, there exists a function $\xi'(n)$ which increases to $\infty
$, such that
%
%
\begin{equation}
\ind \bigl(L'^{(n)} - \hat{L}'^{(n)}<
\bigl(\xi'(n)\bigr)^{-1}L'^{(n)}
\bigr) \parrow1. \label{L'L'hat}
\end{equation}

Let $\hat{L}^{(n)}_{(k)}$ (resp., $\hat{L}'^{(n)}_{(k)}$) be the weight
of the first $k$ vertices from $\hat{V}^{(n)}$ (resp., $\hat
{V}'^{(n)}$) explored in $\hat{\mathcal{S}}$. Note that
\[
\mathbb{P}_{\omega} \Bigl(|\hat{\mathcal{S}}|=k, \bigl|\hat{
\mathcal{S}}'\bigr|=l \Bigm| \hat{L}^{(n)}_{(k)}\geq\bigl(
\xi'(n)\bigr)^{1/2} \cup\hat {L}'^{(n)}_{(l)}
\geq\bigl(\xi(n)\bigr)^{1/2} \Bigr) \parrow0,
\]
since if the conditioning event occurs, then the probability that the
susceptibility set does not extend further goes to 0 as $n \to\infty$.
It follows that
%
%
\begin{eqnarray}
\label{susetext}&& \mathbb{P}_{\omega} \bigl(|\hat{\mathcal{S}}|=k, \bigl|\hat{
\mathcal{S}}'\bigr|=l, \hat{L}^{(n)}_{(k)}< \bigl(
\xi'(n)\bigr)^{1/2}, \hat{L}'^{(n)}_{(l)}<
\bigl(\xi (n)\bigr)^{1/2} \bigr)\nonumber\\
 &&\quad{}- \mathbb{P}_{\omega}\bigl(|
\hat{\mathcal{S}}|=k, \bigl|\hat{\mathcal{S}}'\bigr|=l\bigr)\\
&&\qquad\parrow0.\nonumber
\end{eqnarray}
Given $\omega$, when constructing the graph $G^{(n)}$ from $\hat
{G}^{(n)}$, the expected number of newly-added edges between the first
$k$ vertices from $\hat{V}^{(n)}$ explored in $\hat{\mathcal{S}}$ and
$V^{'(n)}\setminus\hat{V}^{'(n)}$ is
\[
F^{(n)}_k=\frac{\hat{L}^{(n)}_{(k)}(L'^{(n)} - \hat{L}'^{(n)})}{\mu n}.
\]
Suppose that $\hat{L}^{(n)}_{(k)}< (\xi'(n))^{1/2}$. Then
\[
F^{(n)}_k \le\bigl(\xi'(n)
\bigr)^{1/2}\frac{(L'^{(n)} - \hat
{L}'^{(n)})}{L'^{(n)}}\frac{L'^{(n)}}{n\mu},
\]
which, together with \eqref{L'L'hat} and the fact that $L'^{(n)}/(n \mu
) \asarrows1$ as $n\to\infty$, yields
\[
F^{(n)}_k \ind\bigl(\hat{L}^{(n)}_{(k)}<
\bigl(\xi'(n)\bigr)^{1/2}\bigr)\parrow0.
\]
Combining this, and a corresponding result for the number of
newly-added edges between the first $l$ vertices from $\hat{V}^{'(n)}$
explored in $\hat{\mathcal{S}}$ and $V^{(n)}\setminus\hat{V}^{(n)}$,
with \eqref{susetext}
establishes that
\[
\mathbb{P}_{\omega} \bigl(|\hat{\mathcal{S}}|=k,\bigl |\hat{
\mathcal{S}}'\bigr|=l, \mathcal{S} \cap\bigl(V^{(n)} \setminus
\hat{V}^{(n)}\bigr) \neq\varnothing, \mathcal{S}' \cap
\bigl(V'^{(n)} \setminus\hat{V}'^{(n)}
\bigr) \neq\varnothing \bigr)
\parrow 0,
\]
which completes the proof of \eqref{SS'sameProbs} and thus of the lemma.
\end{pf}

\begin{lemma}\label{lemignorehat}
For every $\varepsilon>0$ there exists $K\in\mathbb{N}$ such that
\[
\ind\bigl(\mathbb{P}_{\omega}\bigl(\bigl|\hat{\mathcal{S}}_{t_n}
\bigl(\hat{A}^{(n)},\hat {B}^{(n)},\mathcal{I}\bigr)\bigr|=0,\bigl|
\mathcal{S}\bigl(A^{(n)},B^{(n)},\mathcal{I}\bigr)\bigr|>K\bigr) <
\varepsilon\bigr) \parrow1.
\]
\end{lemma}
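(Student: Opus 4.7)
The plan is to exploit the natural containment $\hat{\mathcal{S}}\subseteq\mathcal{S}$ provided by the coupling in which $\hat{\mathbb{A}}^{(n)}$ is a sub-multigraph of $\mathbb{A}^{(n)}$: re-using the same infectious periods and edge-deletion coins, any directed chain in the Epidemic Generated Graph of $\hat{G}^{(n)}$ is also a directed chain in that of $G^{(n)}$, so $|\hat{\mathcal{S}}|\le|\mathcal{S}|$ and $\{|\mathcal{S}|\le K\}\subseteq\{|\hat{\mathcal{S}}|\le K\}$. Consequently
\begin{equation*}
\mathbb{P}_\omega(|\hat{\mathcal{S}}|\le K,\,|\mathcal{S}|>K)=\mathbb{P}_\omega(|\hat{\mathcal{S}}|\le K)-\mathbb{P}_\omega(|\mathcal{S}|\le K),
\end{equation*}
and summing Lemma~\ref{hatdoesntmatter} over $k=0,1,\ldots,K$ shows that for every fixed $K$ this difference converges to $0$ in $\nu$-probability.

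I would then combine this with the decomposition
\begin{equation*}
\mathbb{P}_\omega(|\hat{\mathcal{S}}_{t_n}|=0,\,|\mathcal{S}|>K)\le \mathbb{P}_\omega(|\hat{\mathcal{S}}_{t_n}|=0,\,|\hat{\mathcal{S}}|>K)+\mathbb{P}_\omega(|\hat{\mathcal{S}}|\le K,\,|\mathcal{S}|>K).
\end{equation*}
For the first term, mimic the arguments of Lemmas~\ref{bplimits}--\ref{fcoupling} in the simpler single-type backward setting by coupling $\hat{\mathcal{S}}$ to the branching process $\mathcal{Z}^b(\hat{A}^{(n)},\hat{B}^{(n)},\mathcal{I})$; together with the weak convergences $\hat{A}^{(n)}\Rightarrow A$ and $\hat{B}^{(n)}\Rightarrow B$ this yields
\begin{equation*}
\mathbb{P}_\omega(|\hat{\mathcal{S}}_{t_n}|=0,\,|\hat{\mathcal{S}}|>K)\ \parrow\ \mathbb{P}(|\mathcal{Z}^b(A,B,\mathcal{I})|>K,\,\mathcal{Z}^b\text{ dies out}),
\end{equation*}
where I use that $t_n\to\infty$ so the event $\{|\mathcal{Z}^b_{t_n}|=0\}$ coincides with extinction in the limit. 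Since the total progeny of $\mathcal{Z}^b$ is almost surely finite on its extinction event, the right-hand side is made less than $\epsilon/3$ by taking $K$ sufficiently large.

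Putting these two bounds together gives $\mathbb{P}_\omega(|\hat{\mathcal{S}}_{t_n}|=0,\,|\mathcal{S}|>K)<\epsilon$ with $\nu$-probability tending to $1$, which is the stated conclusion. The main obstacle is the first-term bound: one must upgrade Lemma~\ref{bplimits} so that it tracks not just the total size but also the generation by which the susceptibility-set process dies out, and must ensure that the tail control on $|\mathcal{Z}^b|$ conditional on extinction is uniform in $n$. The birthday-type estimate of Lemma~\ref{fcoupling} (applied up to any bounded susceptibility-set size, which is all that is needed once $K$ is fixed), together with the weak convergence of the truncated-weight branching processes $\mathcal{Z}^b(\hat{A}^{(n)},\hat{B}^{(n)},\mathcal{I})\Rightarrow\mathcal{Z}^b(A,B,\mathcal{I})$, handles both points.
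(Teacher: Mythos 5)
Your proposal is correct and follows essentially the same route as the paper: couple $\hat{\mathcal{S}}$ to the branching process $\mathcal{Z}^b(\hat{A}^{(n)},\hat{B}^{(n)},\mathcal{I})$ to show $\mathbb{P}_{\omega}(|\hat{\mathcal{S}}_{t_n}|=0,|\hat{\mathcal{S}}|>K)$ converges in $\nu$-probability to $\mathbb{P}(|\mathcal{Z}^b(A,B,\mathcal{I})|\in(K,\infty))$ (small for large $K$, with the uniform-in-$n$ tail control obtained exactly as you indicate, by fixing $L$ with $t_n>L$ eventually and invoking convergence of survival probabilities), and then pass from $\hat{\mathcal{S}}$ to $\mathcal{S}$ via the monotone coupling $|\hat{\mathcal{S}}|\le|\mathcal{S}|$ together with Lemma~\ref{hatdoesntmatter}. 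The only difference is cosmetic: you split the target event with a union bound into the two terms, whereas the paper writes both $\mathbb{P}_{\omega}(|\hat{\mathcal{S}}_{t_n}|=0,|\hat{\mathcal{S}}|>K)$ and $\mathbb{P}_{\omega}(|\hat{\mathcal{S}}_{t_n}|=0,|\mathcal{S}|>K)$ as exact differences $\mathbb{P}_{\omega}(|\hat{\mathcal{S}}_{t_n}|=0)-\mathbb{P}_{\omega}(|\hat{\mathcal{S}}|\le K)$ and $\mathbb{P}_{\omega}(|\hat{\mathcal{S}}_{t_n}|=0)-\mathbb{P}_{\omega}(|\mathcal{S}|\le K)$ for large $n$ (using $t_n>K$), which amounts to the same comparison.
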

\begin{pf}
For ease of presentation we suppress the dependence on the
distributions of the weights and infectious periods, writing $\hat
{\mathcal{S}}$ for $\hat{\mathcal{S}}(\hat{A}^{(n)},\hat
{B}^{(n)},\mathcal{I})$ and $\mathcal{S}$ for $\mathcal
{S}(A^{(n)},B^{(n)},\mathcal{I})$. First note that, as in the proof of
Lemma \ref{sizesus}, we can use branching process approximations to
show that for every $K\in\mathbb{N}$ we have
%
%
\begin{eqnarray}
\label{lemignorehat1} &&\mathbb{P}_{\omega}\bigl(|\hat{\mathcal{S}}_{t_n}|=0,|
\hat{\mathcal{S}}|>K\bigr) \nonumber\\
&&\quad{}- \mathbb{P}_{\omega}\bigl(\bigl|\mathcal{Z}^b_{t_n}
\bigl(\hat{A}^{(n)},\hat {B}^{(n)},\mathcal{I}\bigr)\bigr|=0,\bigl|
\mathcal{Z}^b\bigl(\hat{A}^{(n)},\hat {B}^{(n)},
\mathcal{I}\bigr)\bigr|>K\bigr)\\
&&\qquad\parrow0.\nonumber
\end{eqnarray}
Now,
%
%
\begin{eqnarray}\label{lemignorehat2}
&&\mathbb{P}_{\omega} \bigl(\bigl|\mathcal{Z}^b_{t_n}
\bigl(\hat{A}^{(n)},\hat {B}^{(n)},\mathcal{I}\bigr)\bigr|=0,\bigl|
\mathcal{Z}^b\bigl(\hat{A}^{(n)},\hat {B}^{(n)},
\mathcal{I}\bigr)\bigr|>K \bigr)
\nonumber
\\
&&\qquad = \mathbb{P}_{\omega} \bigl(\bigl|\mathcal{Z}^b\bigl(
\hat{A}^{(n)},\hat {B}^{(n)},\mathcal{I}\bigr)\bigr|>K \bigr)
\nonumber
\\[-8pt]
\\[-8pt]
\nonumber
&&\quad\qquad{} -\mathbb{P}_{\omega} \bigl(\bigl|\mathcal{Z}^b_{t_n}
\bigl(\hat {A}^{(n)},\hat{B}^{(n)},\mathcal{I}\bigr)\bigr|>0,\bigl|
\mathcal{Z}^b\bigl(\hat {A}^{(n)},\hat{B}^{(n)},
\mathcal{I}\bigr)\bigr|>K \bigr)
\\
&&\qquad = \mathbb{P}_{\omega} \bigl(\bigl|\mathcal{Z}^b\bigl(
\hat{A}^{(n)},\hat {B}^{(n)},\mathcal{I}\bigr)\bigr|>K\bigr) -
\mathbb{P}_{\omega}\bigl(\bigl|\mathcal {Z}^b_{t_n}\bigl(
\hat{A}^{(n)},\hat{B}^{(n)},\mathcal{I}\bigr)\bigr|>0
\bigr)\nonumber
\end{eqnarray}
for all sufficiently large $n$, since $|\mathcal{Z}^b_{t_n}(\hat
{A}^{(n)},\hat{B}^{(n)},\mathcal{I})|>0$ implies that $|\mathcal
{Z}^b(A^{(n)},\allowbreak B^{(n)},\mathcal{I})|>t_n$.

Arguing as in the proof of Lemma \ref{bplimits} shows that
%
%
\begin{equation}
\mathbb{P}_{\omega}\bigl(\bigl|\mathcal{Z}^b\bigl(
\hat{A}^{(n)},\hat{B}^{(n)},\mathcal {I}\bigr)\bigr|>K\bigr) \parrow
\mathbb{P}_{\omega}\bigl(\bigl|\mathcal{Z}^b(A,B,\mathcal{I})\bigr|>K
\bigr). \label{lemignorehat3}
\end{equation}
To deal with the second term on the right-hand side of \eqref
{lemignorehat2}, observe that
%
%
\begin{eqnarray}
\label{lemignorehat5}&& \mathbb{P}_{\omega} \bigl(\bigl|\mathcal{Z}^b_{t_n}
\bigl(\hat{A}^{(n)},\hat {B}^{(n)},\mathcal{I}\bigr)\bigr|>0\bigr) \nonumber\\
&&\qquad
= \mathbb{P}_{\omega}\bigl(\bigl|\mathcal{Z}^b\bigl(
\hat{A}^{(n)},\hat {B}^{(n)},\mathcal{I}\bigr)\bigr|=\infty\bigr)
\\
& &\qquad\quad{} + \mathbb{P}_{\omega}\bigl(\bigl|\mathcal{Z}^b_{t_n}
\bigl(\hat{A}^{(n)},\hat {B}^{(n)},\mathcal{I}\bigr)\bigr|>0, \bigl|
\mathcal{Z}^b\bigl(\hat{A}^{(n)},\hat {B}^{(n)},
\mathcal{I}\bigr)\bigr|<\infty\bigr)\nonumber
\end{eqnarray}
and
\begin{eqnarray*}
&&\mathbb{P}_{\omega}\bigl(\bigl|\mathcal{Z}^b_{t_n}\bigl(
\hat{A}^{(n)},\hat {B}^{(n)},\mathcal{I}\bigr)\bigr|>0, \bigl|
\mathcal{Z}^b\bigl(\hat{A}^{(n)},\hat {B}^{(n)},
\mathcal{I}\bigr)\bigr|<\infty\bigr)
\\
&&\qquad\le \mathbb{P}_{\omega}\bigl(\bigl|
\mathcal{Z}^b\bigl(\hat{A}^{(n)},\hat{B}^{(n)},
\mathcal {I}\bigr)\bigr| \in(t_n,\infty)\bigr).
\end{eqnarray*}

Now, given any $\varepsilon>0$, there exists $L\in\mathbb{N}$ such that $
\mathbb{P}(|\mathcal{Z}^b(A,B,\mathcal{I})| \in(L,\infty))<\varepsilon$.
(If $R_* \le1$, then $|\mathcal{Z}^b|$ is almost surely finite and the
statement follows immediately. If $R_* > 1$, the statement follows by
writing $ \mathbb{P}(|\mathcal{Z}^b| \in(L,\infty))=\rho^b\mathbb
{P}(|\mathcal{Z}^b| \in(L,\infty)\mid |\mathcal{Z}^b|<\infty)$ and using
the fact that a supercritical Galton--Watson process conditioned on
extinction is probabilistically equivalent to an associated subcritical
Galton--Watson process \cite{Daly79}.) Further, \eqref{lemignorehat3}
and Lemma~4.1 of \cite{Brit07}, imply that
\[
\mathbb{P}_{\omega}\bigl(\bigl|\mathcal{Z}^b\bigl(
\hat{A}^{(n)},\hat{B}^{(n)},\mathcal {I}\bigr)\bigr| \in(L,\infty)
\bigr) \parrow \mathbb{P}\bigl(\bigl|\mathcal{Z}^b(A,B,\mathcal{I})\bigr| \in(L,
\infty)\bigr),
\]
so
\[
\ind\bigl(\mathbb{P}_{\omega}\bigl(\bigl|\mathcal{Z}^b\bigl(
\hat{A}^{(n)},\hat {B}^{(n)},\mathcal{I}\bigr)\bigr| \in(L,\infty)
\bigr) < \varepsilon\bigr) \parrow1,
\]
which implies that
\[
\ind\bigl(\mathbb{P}_{\omega}\bigl(\bigl|\mathcal{Z}^b\bigl(
\hat{A}^{(n)},\hat {B}^{(n)},\mathcal{I}\bigr)\bigr|
\in(t_n,\infty)\bigr) < \varepsilon\bigr) \parrow1.
\]
As this holds for any $\varepsilon>0$, it follows from \eqref
{lemignorehat2}, \eqref{lemignorehat3} and \eqref{lemignorehat5}, with
another application of Lemma 4.1 of \cite{Brit07}, that
%
%
\begin{eqnarray}
\label{lemignorehat6} && \mathbb{P}_{\omega}\bigl(\bigl|\mathcal{Z}_{t_n}^b
\bigl(\hat{A}^{(n)},\hat {B}^{(n)},\mathcal{I}\bigr)\bigr| =0, \bigl|
\mathcal{Z}^b\bigl(\hat{A}^{(n)},\hat {B}^{(n)},
\mathcal{I}\bigr)\bigr|>K\bigr)
\nonumber
\\[-8pt]
\\[-8pt]
\nonumber
&&\qquad \parrow\mathbb{P}\bigl(\bigl|\mathcal{Z}^b(A,B,\mathcal{I})\bigr| \in (K,
\infty)\bigr).
\end{eqnarray}

Now $\mathbb{P}(|\mathcal{Z}^b(A,B,\mathcal{I})| \in(K,\infty))$ can be
made arbitrarily close to 0 by choosing~$K$ sufficiently large.
Thus \eqref{lemignorehat1} and \eqref{lemignorehat6} imply that, for
every $\varepsilon>0$, we can choose $K\in\mathbb{N}$ such that
%
%
\begin{equation}
\ind\bigl( \mathbb{P}_{\omega}\bigl(|\hat{\mathcal{S}}_{t_n}|=0,|
\hat{\mathcal {S}}|>K\bigr) < \varepsilon\bigr) \parrow1. \label{lemignorehat7}
\end{equation}

Finally, note that
\begin{eqnarray*}
\mathbb{P}_{\omega}\bigl(|\hat{\mathcal{S}}_{t_n}|=0,|\hat{
\mathcal{S}}|>K\bigr) & = & \mathbb{P}_{\omega}\bigl(|\hat{\mathcal{S}}_{t_n}|=0\bigr)
- \mathbb {P}_{\omega}\bigl(|\hat{\mathcal{S}}_{t_n}|=0,|\hat{
\mathcal{S}}| \leq K\bigr)
\\
& = & \mathbb{P}_{\omega}\bigl(|\hat{\mathcal{S}}_{t_n}|=0\bigr) -
\mathbb {P}_{\omega}\bigl(|\hat{\mathcal{S}}| \leq K\bigr)
\end{eqnarray*}
for all sufficiently large $n$. Similarly, since $|\mathcal{S}| \geq
|\hat{\mathcal{S}}|$,
\[
\mathbb{P}_{\omega}\bigl(|\hat{\mathcal{S}}_{t_n}|=0, |
\mathcal{S}|>K\bigr) = \mathbb{P}_{\omega}\bigl(|\hat{\mathcal{S}}_{t_n}|=0\bigr)
- \mathbb{P}_{\omega
}(|\mathcal{S}| \leq K)
\]
for all sufficiently large $n$. Hence, by Lemma \ref{hatdoesntmatter},
\[
\mathbb{P}_{\omega}\bigl(|\hat{\mathcal{S}}_{t_n}|=0, | \hat{
\mathcal{S}}|>K\bigr) - \mathbb{P}_{\omega}\bigl(|\hat{\mathcal{S}}_{t_n}|=0,
| \mathcal{S}|>K\bigr) \parrow0,
\]
whence the lemma follows from \eqref{lemignorehat7}.
\end{pf}

For the remainder of the proof of Theorem \ref{finalsizethm}, we
re-analyze an exploration process of the forward epidemic process, and
we couple it to a multi-type branching process, such that the epidemic
process is bigger than the branching process for as long as the total
weight of both the vertices and the clique vertices in the exploration
process is less than a predefined fraction of the total weight. The
survival probability of this branching process can be made arbitrarily
close to the probability of a large outbreak as $n \to\infty$. After
that we ``glue'' the susceptibility sets, if they are large, to the
forward epidemic process.

We need some extra notation.
Since the weights of the vertices are exchangeable, the model does not
change if we order the vertices such that $A_i^{(n)} \leq
A_{i+1}^{(n)}$, and $B_j^{(n)} \leq B_{j+1}^{(n)}$, for $1\leq i <n$
and $1 \leq j < \lfloor\alpha n \rfloor$.
For $\gamma\in(0,1)$, we define
\begin{eqnarray*}
R^{(n)}(\gamma) & = &\min \biggl(i \leq n\dvtx \frac{\sum_{j=1}^i
A_j}{L^{(n)}} \geq1-
\gamma \biggr)\quad \mbox{and}
\\
R'^{(n)}(\gamma) & =& \min \biggl(i \leq\lfloor\alpha n
\rfloor\dvtx \frac
{\sum_{j=1}^i B_j}{L'^{(n)}} \geq1-\gamma \biggr).
\end{eqnarray*}
Furthermore, define
\begin{eqnarray*}
\bar{\gamma}&=& \bar{\gamma}(\gamma,n)  = 1- \frac{\sum_{j=1}^{R^{(n)}(\gamma)} A_j}{L^{(n)}}\quad \mbox{and}
\\
\bar{\gamma}'&=& \bar{\gamma}'(\gamma,n)  = 1-
\frac{\sum_{j=1}^{R'^{(n)}(\gamma)} B_j}{L'^{(n)}}.
\end{eqnarray*}
We claim that, for $\gamma\in(0, 1)$, $\bar{\gamma} \parrow\gamma$.
This can be seen by the following reasoning.
Let $x = \inf(y\geq0\dvtx \mu^{-1}\mathbb{E}[A \ind(A < y)]> 1-\gamma/2)$.
Then $x$ is finite, since $\mu= \mathbb{E}[A]<\infty$.
By the strong law of large numbers, we have
$n^{-1}\sum_{i=1}^n A_i \ind(A_i \leq x) \asarrows\mathbb{E}[A \ind(A
\leq x)]$ and $n^{-1} L^{(n)} \asarrows\mu$ as $n\to\infty$. Thus
\[
\frac{\sum_{i=1}^n A_i \ind(A_i\leq x)}{L^{(n)}} \asarrows\mu ^{-1}\mathbb{E}\bigl[A \ind(A \leq x)
\bigr] \geq1 - \gamma/2
\]
as $n \to\infty$, whence $\nu(A_{R^{(n)}} \leq x) \to1$ as $n \to
\infty$.
Combining this with
\[
1-\bar{\gamma} = \frac{\sum_{j=1}^{R^{(n)}(\gamma)} A_j}{L^{(n)}} \geq 1-\gamma
\]
and
\[
1-\bar{\gamma} - \frac{A_{R^{(n)}}}{L^{(n)}} = \frac{\sum_{j=1}^{R^{(n)}(\gamma)-1} A_j}{L^{(n)}} < 1-\gamma
\]
completes the proof of the claim.
Similarly we can prove that $\bar{\gamma}' \parrow\gamma$. This also
shows that the vertices in $V^{(n)} \setminus\hat{V}^{(n)}$ [resp.,
$V'^{(n)} \setminus\hat{V}'^{(n)}$] all have labels exceeding
$R^{(n)}(\gamma)$ [resp., $R'^{(n)}(\gamma)$] with probability tending
to 1 as $n\to\infty$.

For $c_1>0$, let $I(c_1)$ be the set of vertices with type/infectious
period less than $c_1$. Let $\mathcal{I}(c_1)$ denote a random variable
having distribution function given by $\mathbb{P}(\mathcal{I}(c_1) \leq
x) = \mathbb{P}(\mathcal{I} \leq x\mid \mathcal{I}\geq c_1)$, for $x \geq
c_1$. We use the multi-type branching process $\mathcal
{Z}^{f}(A^{(n)},B^{(n)},\mathcal{I}(c_1), \gamma)$, which is obtained
from $\mathcal{Z}^{f}(A^{(n)},B^{(n)},\mathcal{I}(c_1))$ by:
\begin{longlist}[(ii)]
\item[(i)] Killing upon birth all children with $A$-weight strictly
larger than the weight of vertex $R^{(n)}(\gamma)$. Children with
$A$-weight equal to the weight of vertex $R^{(n)}(\gamma)$ are killed
independently with probability given by the fraction of those vertices
in $V^{(n)}$ having weight equal to the weight of vertex $R^{(n)}(\gamma
)$ that also have label strictly larger than $R^{(n)}(\gamma)$.
\item[(ii)] Killing upon birth all litters corresponding to local
epidemics in cliques with $B$-weight strictly larger than the weight of
vertex $R'^{(n)}(\gamma)$. Cliques with $B$-weight equal to the weight
of clique $R'^{(n)}(\gamma)$ are killed independently with probability
given by the fraction of those vertices in $V'^{(n)}$ having $B$-weight
equal to the weight of clique $R'^{(n)}(\gamma)$ that also have label
strictly larger than $R'^{(n)}(\gamma)$.
\end{longlist}
If $A_1,A_2,\ldots,A_n$ are distinct, which happens $\nu$-almost surely
if the distribution of $A$ has no atoms, then (i) reduces to killing
upon birth all children with $A$-weight strictly larger than the weight
of vertex $R^{(n)}(\gamma)$. If $B_1,B_2,\ldots,B_{\lfloor\alpha
n\rfloor}$ are distinct, then (ii) simplifies similarly.

We observe that the corresponding survival probability function (cf. Section~\ref{forwardBP})
$\tilde{\rho}(x;A^{(n)},B^{(n)},\mathcal{I}(c_1),\gamma)$ increases as
$\gamma\downarrow0$. Thus, the limit function,
as $\gamma\downarrow0$,
exists and satisfies \eqref{survivetilde} by the monotone convergence
theorem. Invoking Lemma~\ref{unilem}, this limit function is
\[
\lim_{\gamma\downarrow0} \tilde{\rho}\bigl(x;A^{(n)},B^{(n)},
\mathcal {I}(c_1), \gamma\bigr) = \tilde{\rho}\bigl(x;A^{(n)},B^{(n)},
\mathcal{I}(c_1)\bigr).
\]
Similarly, since $\tilde{\rho}(x;A^{(n)},B^{(n)},\mathcal{I}(c_1))$ is
decreasing as $c_1 \downarrow0$, one can show that
\[
\lim_{c_1 \downarrow0} \tilde{\rho}\bigl(x;A^{(n)},B^{(n)},
\mathcal {I}(c_1)\bigr) = \tilde{\rho}\bigl(x;A^{(n)},B^{(n)},
\mathcal{I}\bigr).
\]
For $\rho(A^{(n)},B^{(n)},\mathcal{I})$ as in Section~\ref{forwardBP},
this leads to the first assertion of the following lemma. The second
assertion then follows using Lemma \ref{survfin}.
%
\begin{lemma}\label{killprocess}
For every $\varepsilon> 0$, $\omega\in\Omega$ and $n \in\mathbb{N}$,
there exist $\gamma>0$ and $c_1>0$ small enough such that
\[
\bigl|\rho\bigl(A^{(n)},B^{(n)},\mathcal{I}(c_1),
\gamma\bigr) - \rho \bigl(A^{(n)},B^{(n)},\mathcal{I}\bigr)\bigr| <
\varepsilon/2.
\]
For every $\varepsilon>0$, there exist $\gamma>0$ and $c_1>0$ such that
\[
\ind\bigl(\bigl|\rho\bigl(A^{(n)},B^{(n)},\mathcal{I}(c_1),
\gamma\bigr) -\rho(A,B,\mathcal {I})\bigr| < \varepsilon\bigr) \parrow1.
\]
\end{lemma}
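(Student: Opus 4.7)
The plan is to obtain the first assertion from the pointwise monotone limits of $\tilde\rho$ already established in the paragraph immediately preceding the lemma, and then to derive the second assertion by combining the first applied to the limiting distributions $(A,B)$ with a Lemma~\ref{survfin}-style convergence for the killed process.

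For the first assertion, I would write $\rho(A^{(n)},B^{(n)},\mathcal{I}(c_1),\gamma)=\mathbb{E}[\Phi_\gamma(\tilde\rho_{c_1,\gamma})(\mathcal{I}(c_1))]$, the analog of~\eqref{survive} for the killed process, where $\Phi_\gamma$ is the natural modification of the functional~\eqref{mostimpeq} incorporating the two killing rules (i) and (ii). Using the monotone limits $\tilde\rho_{c_1,\gamma}(x)\uparrow\tilde\rho_{c_1}(x)$ as $\gamma\downarrow 0$ and $\tilde\rho_{c_1}(x)\downarrow\tilde\rho(x)$ as $c_1\downarrow 0$ (which the text just before the lemma derives from monotone convergence and Lemma~\ref{unilem}), together with the obvious monotonicity of $\Phi_\gamma$ in $\gamma$ and with $\mathcal{I}(c_1)\Rightarrow\mathcal{I}$ as $c_1\downarrow 0$ (valid since $\mathbb{P}(\mathcal{I}=0)=0$), an application of (monotone or bounded) convergence yields $\rho(A^{(n)},B^{(n)},\mathcal{I}(c_1),\gamma)\to\rho(A^{(n)},B^{(n)},\mathcal{I})$ as $c_1,\gamma\downarrow 0$ for every fixed $\omega,n$. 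Choosing $c_1,\gamma>0$ sufficiently small (depending on $\omega,n,\epsilon$) then gives the $\epsilon/2$ bound.

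For the second assertion, the derivation above goes through verbatim with $(A,B)$ in place of $(A^{(n)},B^{(n)})$, since the $n$-dependence plays no role in it. Hence I can first fix $c_1,\gamma>0$ depending only on $\epsilon$ so that $|\rho(A,B,\mathcal{I}(c_1),\gamma)-\rho(A,B,\mathcal{I})|<\epsilon/3$. A triangle inequality then reduces the claim to $\rho(A^{(n)},B^{(n)},\mathcal{I}(c_1),\gamma)\parrow\rho(A,B,\mathcal{I}(c_1),\gamma)$ for this fixed pair. I would prove the latter by imitating the proof of Lemma~\ref{survfin}: discretise $\mathcal{I}(c_1)$ by its dyadic truncations to reduce to finite-type branching processes, then use $A^{(n)}\Rightarrow A$, $B^{(n)}\Rightarrow B$ and the convergence $\bar\gamma\parrow\gamma$ (shown earlier in the section) to conclude that the killed offspring distributions converge as $n\to\infty$, and pass to the limit via the analog of Lemma~\ref{bplimits}.

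The main obstacle is the Lemma~\ref{survfin}-for-the-killed-process step: the killing is driven by the empirical weight quantile $R^{(n)}(\gamma)$, which is an $n$-dependent threshold, so the killed offspring distributions must be shown to converge despite this extra $n$-dependence. The convergence $\bar\gamma\parrow\gamma$ established just before the lemma is exactly the ingredient that stabilises the effective threshold, and once this is in hand the remainder of the argument parallels the proof of Lemma~\ref{survfin} essentially verbatim.
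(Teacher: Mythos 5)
Your proof is correct, and its first half is exactly the paper's argument: the paper also obtains the first assertion from the monotone limits $\tilde{\rho}(x;A^{(n)},B^{(n)},\mathcal{I}(c_1),\gamma)\uparrow\tilde{\rho}(x;A^{(n)},B^{(n)},\mathcal{I}(c_1))$ as $\gamma\downarrow 0$ and $\tilde{\rho}(x;A^{(n)},B^{(n)},\mathcal{I}(c_1))\downarrow\tilde{\rho}(x;A^{(n)},B^{(n)},\mathcal{I})$ as $c_1\downarrow 0$, identified via Lemma~\ref{unilem}, and then passes to $\rho$ through the analogue of~\eqref{survive}. For the second assertion, however, you take a genuinely different route. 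The paper pivots on the \emph{unkilled} finite-$n$ quantity: it applies the first assertion to bound $|\rho(A^{(n)},B^{(n)},\mathcal{I}(c_1),\gamma)-\rho(A^{(n)},B^{(n)},\mathcal{I})|$ and then invokes Lemma~\ref{survfin} for $|\rho(A^{(n)},B^{(n)},\mathcal{I})-\rho(A,B,\mathcal{I})|$; you instead pivot on a \emph{limiting killed} quantity $\rho(A,B,\mathcal{I}(c_1),\gamma)$, fix $(c_1,\gamma)$ once at the limit level, and prove a Lemma~\ref{survfin}-analogue for the killed process at that fixed pair. One caveat: your claim that the first-assertion derivation goes through ``verbatim'' for $(A,B)$ is a slight overstatement, since the killing rules (i)--(ii) are phrased through the empirical threshold $R^{(n)}(\gamma)$ and the paper never defines a limiting killed process; you must first define it (kill children whose weight exceeds the point $a_\gamma$ determined by $\mathbb{E}[A\ind(A>a_\gamma)]\le\gamma\,\mathbb{E}[A]$, with randomized tie-breaking at an atom, and similarly for $B$), after which the monotone/uniqueness argument does carry over. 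Your survfin-for-the-killed-process step then needs convergence of the killed offspring distributions, which follows from $\tilde{A}^{(n)}\Rightarrow\tilde{A}$, $\tilde{B}^{(n)}\Rightarrow\tilde{B}$ and $\bar{\gamma}\parrow\gamma$ (you correctly single out the latter as the stabiliser), with possible atoms of $A$ or $B$ at the threshold handled either by the tie-breaking or by choosing $\gamma$ among the at most countably excluded values where the threshold is a continuity point --- free, since the first assertion allows $\gamma$ arbitrarily small. What your route buys is a cleaner treatment of a point the paper leaves implicit: in the first assertion $(\gamma,c_1)$ may depend on $\omega$ and $n$, whereas the second assertion requires a single choice, and the paper's one-line ``follows using Lemma~\ref{survfin}'' glosses this uniformity; fixing the parameters at the limit, as you do, resolves it at the cost of the extra convergence lemma that the paper's shorter argument avoids.
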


Let $c_1>0$ and $\gamma\geq0$ be constants. We consider the forward
epidemic process $\bar{\mathcal{R}}^{(n,\gamma)} = \bar{\mathcal
{R}}^{(n)}(\omega,\mathcal{I}, c_1,\gamma/3)$, which is obtained from
$\mathcal{R}^{(n)}(\omega,\mathcal{I})$ by removing all vertices (and
adjacent edges) in $I(c_1)$, $K^1(t_n)$ and $K^2(t_n)$ and not allowing
for contacts in the cliques $K'^1(t_n)$ and $K'^2(t_n)$ or in cliques
with label $R'^{(n)}(\gamma/3)$ or larger. As before, we deduce that
for every $\gamma>0$ and large enough $n$, all vertices in
$V'^{(n)}\setminus\hat{V}'^{(n)}$ have label at least $R'^{(n)}(\gamma
/3)$, with probability arbitrarily close to 1.
Also define $\bar{\mathcal{R}}^{(n)} = \bar{\mathcal{R}}^{(n,0)} = \bar
{\mathcal{R}}(\omega,\mathcal{I}, c_1,0)$, and let the total weight of
the cliques in $\bar{\mathcal{R}}^{(n)}$ (i.e., in the set of
ultimately recovered vertices in $\bar{\mathcal{R}}^{(n)}$) be denoted
by $\bar{\mathcal{W}}'^{(n)}(c_1)$.

\begin{lemma}\label{eplarbra}
Suppose that $R_*>1$. Then for every $\varepsilon>0$, there exist
constants $\eta>0$ and $c_1>0$, such that
\[
\ind \bigl(\mathbb{P}_{\omega}\bigl(\bar{\mathcal{W}}'^{(n)}(c_1)>
\eta n\bigr) - \bigl(\rho(A,B,\mathcal{I}) -\varepsilon\bigr)>0 \bigr) \parrow1.
\]
\end{lemma}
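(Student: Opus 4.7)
The plan is to bound $\bar{\mathcal{W}}'^{(n)}(c_1)$ from below via a coupling with the killed forward branching process from Lemma~\ref{killprocess}. Observe that $\bar{\mathcal{R}}^{(n,\gamma/3)}\subseteq \bar{\mathcal{R}}^{(n)}$, so the total clique weight in $\bar{\mathcal{R}}^{(n,\gamma/3)}$ is at most $\bar{\mathcal{W}}'^{(n)}(c_1)$; it therefore suffices to bound this smaller quantity. Using Lemma~\ref{killprocess}, first fix $\gamma>0$ and $c_1>0$ so that for large $n$, $\rho(A^{(n)},B^{(n)},\mathcal{I}(c_1),\gamma/3) > \rho(A,B,\mathcal{I}) - \epsilon/2$ with $\nu$-probability tending to $1$.

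Next, construct a generation-by-generation coupling between the exploration of $\bar{\mathcal{R}}^{(n,\gamma/3)}$ and $\mathcal{Z}^f(A^{(n)}, B^{(n)}, \mathcal{I}(c_1), \gamma/3)$ in the spirit of Lemma~\ref{fcoupling}: each offspring of the branching process is matched with a fresh weight-biased draw from $V^{(n)}$ or $V'^{(n)}$, and the coupling remains intact until either a collision occurs or the sampled label exceeds $R^{(n)}(\gamma/3)$ or $R'^{(n)}(\gamma/3)$. Because in the killed process the offspring weights are bounded above by $A_{R^{(n)}(\gamma/3)}$ and $B_{R'^{(n)}(\gamma/3)}$, both $O(\log n)$ with high $\nu$-probability, the same birthday-style estimate used in Lemma~\ref{fcoupling} ensures the coupling succeeds through generation $t_n=\lceil\log\log n\rceil$ with probability tending to $1$, since the total weight explored by then is only polylogarithmic in $n$. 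A forward analogue of Lemma~\ref{sizesus}, applied to the supercritical killed process (which, as in the proof of Theorem~\ref{survandR0}, dominates an irreducible finite-type supercritical branching process) together with~\cite[Theorem~2.7.1]{Jage75}, then yields that on survival, the size of generation $t_n$ exceeds $(\log n)^{c_0}$ for some $c_0>0$ with probability tending to $1$. Through the coupling the same holds for the epidemic.

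To push the explored clique weight past $\eta n$, continue the epidemic exploration past generation $t_n$. As long as the total vertex weight explored remains below $(\bar\gamma/3)L^{(n)}$ and the total clique weight remains below $\eta n$, each new weight-biased draw is previously unseen with probability at least $1-\eta/\mu-\gamma/3-o(1)$. Choosing $\eta>0$ small enough that this fraction times the mean offspring of the killed process still exceeds $1$, the residual exploration stochastically dominates a further multitype supercritical branching process seeded by the $(\log n)^{c_0}$ lineages active at generation $t_n$. A second-moment concentration estimate for the sum of clique weights produced by these many nearly-independent supercritical subprocesses then forces the total clique weight to exceed $\eta n$ with $\mathbb{P}_\omega$-probability tending to $1$ on the survival event, which combined with the choice of $\gamma, c_1$ yields the desired bound by $\rho(A,B,\mathcal{I})-\epsilon$.

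The main obstacle is the last step: the lineages started at generation $t_n$ are not truly independent, since they share the finite pool of unused vertices and cliques. Handling this requires working conditionally on the set of vertices and cliques already explored, exploiting the bounded offspring weights (at most $\log n$) to control per-generation variance, and stopping the dominating process as soon as the thresholds $\eta n$ in clique weight and $(\bar\gamma/3)L^{(n)}$ in vertex weight are crossed, so that supercriticality of the lower-bounding process is preserved uniformly throughout the argument.
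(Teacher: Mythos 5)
Your first stage is essentially the paper's: fix $\gamma$ and $c_1$ via Lemma~\ref{killprocess}, couple the restricted exploration with the killed forward process using weight-biased draws and forbidden sets, and observe that the epidemic dominates the branching process until the explored/forbidden weight in $V^{(n)}$ or $V'^{(n)}$ crosses its $\bar{\gamma}$-threshold. Your endgame, however, goes wrong in two ways, one structural and one a genuine gap. The structural problem: the intermediate stage (run to generation $t_n$, harvest $(\log n)^{c_0}$ lineages, then apply a second-moment concentration bound to the clique weights of the resulting subprocesses) is both unnecessary and unsound as stated. The subprocesses are explorations of a finite graph that must be stopped at the weight thresholds, so they are not branching processes whose total clique weights you can sum and concentrate; the dependence problem you flag at the end is exactly the problem this step does not solve. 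The paper's observation that makes all of it superfluous is that, on the survival event of the killed branching process (probability within $\epsilon/2$ of $\rho(A,B,\mathcal{I})$ by Lemma~\ref{killprocess}), the coupled branching process has infinite total progeny, while the dominated exploration is finite; hence the domination must break, i.e.\ one of the two weight thresholds is crossed. No growth-rate estimate, no $t_n$, and no variance computation is needed. (If you did want quantitative generation-$t_n$ growth, note that \cite[Theorem~2.7.1]{Jage75} as used in Lemma~\ref{sizesus} is a single-type result; the killed forward process still has uncountable type space, so you would first need a discretization of $\mathcal{I}$ as in Lemma~\ref{survfin}. Also, your per-draw freshness bound $1-\eta/\mu-\gamma/3-o(1)$ omits the initial forbidden weight of $I(c_1)$, which can be of order $(\gamma/2)L^{(n)}$; that is fixable bookkeeping.)

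The genuine gap is the case in which the exploration stops because the \emph{vertex} weight crosses $(\bar{\gamma}/3)L^{(n)}$ while the explored clique weight is still below $\eta n$: nothing in your proposal produces $\Theta(n)$ clique weight in this case. The paper's missing idea is to harvest the cliques with labels at least $R'^{(n)}(\gamma/3)$, which the restricted exploration was forbidden to use. The edges between the infected vertex set, whose total $A$-weight is then at least $(\gamma/3)L^{(n)}=\Theta(n)$, and these reserved cliques constitute untouched randomness, so a law of large numbers yields that the total weight of reserved cliques containing infected vertices is $\Theta(n)$. These cliques do count towards $\bar{\mathcal{W}}'^{(n)}(c_1)$, because the lemma concerns $\bar{\mathcal{R}}^{(n)}=\bar{\mathcal{R}}^{(n,0)}$, the epidemic with no label restriction on cliques. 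Your opening reduction to ``the total clique weight in $\bar{\mathcal{R}}^{(n,\gamma/3)}$'' must therefore be interpreted as the weight of \emph{all} cliques (reserved ones included) containing vertices of the restricted epidemic, otherwise you discard exactly the slack this harvest requires. Without this step or a substitute, your argument only shows that either the vertex weight or the clique weight becomes linear, not that the clique weight does, so the lemma does not follow as written.
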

\begin{pf}
We explore $\bar{\mathcal{R}}^{(n,\gamma)}$ vertex by vertex (and
clique by clique) and couple this with an exploration process of the
tree of the branching process
\[
\mathcal{Z}^{(n,\gamma)} = \mathcal{Z}^{f}\bigl(
\hat{A}^{(n)},\hat {B}^{(n)},\mathcal{I}(c_1),
\gamma\bigr).
\]
With some abuse of notation we use $\bar{\mathcal{R}}^{(n,\gamma)}$ and
$\mathcal{Z}^{(n,\gamma)}$ for the exploration processes as well.

We choose one vertex uniformly at random from $\hat{V}^{(n)}$. We
assume that this vertex is not in $K^1(t_n)$ or $K^2(t_n)$ and that its
type/infectious period exceeds $c_1$. The probability that this
assumption is met can be made arbitrarily close to 1 by choosing $n$
large enough and $c_1$ small enough. Denote this vertex by $\bar{v}_0$.
Define the ``forbidden sets'' of vertices by
\begin{eqnarray*}
\Gamma_0 & = &K^1(t_n) \cup
K^2(t_n) \cup I(c_1) \cup
\bigl(V^{(n)} \setminus \hat{V}^{(n)}\bigr) \cup\{
\bar{v}_0\} \quad\mbox{and}
\\
\Gamma'_0 & =& K'^1(t_n)
\cup K'^2(t_n) \cup\bigl\{
v'_i \in V'^{(n)}\dvtx i \geq
R'^{(n)}(\gamma/3)\bigr\}.
\end{eqnarray*}
For the vertices in $V^{(n)} \setminus\Gamma_0$, we re-randomize the
infectious period in such a way that, for every vertex in $V^{(n)}
\setminus\Gamma_0$, we let it be an independent random variable with
distribution $\mathcal{I}(c_1)$. This will not affect the distribution
of the processes.

Let $\sigma^{(n)}_0(i)$ be a relabeling of the vertices in $V^{(n)}$
such that if $v_j \in\Gamma_0$ and $v_i \in V^{(n)} \setminus\Gamma
_0$, then $\sigma^{(n)}_0(i) < \sigma^{(n)}_0(j)$, while if $v_i,v_j
\in V^{(n)} \setminus\Gamma_0$, then $\sigma^{(n)}_0(i) < \sigma
^{(n)}_0(j)$ if $i < j$. The precise order of the labels of the
vertices in the forbidden set is not important.
Define $\sigma'^{(n)}_0(i)$ similarly.

The $A$-weight and type of $\bar{v}_0$ are also assigned to the
ancestor of $\mathcal{Z}^{(n,\gamma)}$, say that the $A$-weight is
$a_0$. Then we use a $\Poisson(a_0 L'^{(n)}/(\mu n))$ random variable,
$d_0$, to denote the ``maximal'' number of cliques vertex $\bar{v}_0$ is
part of and, coupled to this, the ``maximal'' number of child cliques the
vertex has in $\mathcal{Z}^{(n,\gamma)}$. The meaning of maximal is
clarified below.

We now identify the first child clique. Choose a real number, $x'$ say,
uniformly at random from the unit interval.
In $\bar{\mathcal{R}}^{(n,\gamma)}$ we try to connect vertex $\bar
{v}_0$ to the clique with label $i$, which satisfies
\[
\sum_{j \in\mathbb{N}\dvtx \sigma'^{(n)}_0(j) < \sigma'^{(n)}_0(i)} B_{j} < x'L'^{(n)}
\leq\sum_{j \in\mathbb{N}\dvtx \sigma'^{(n)}_0(j) \leq\sigma
'^{(n)}_0(i)} B_{j}.
\]
Let this vertex be $\bar{v}'_1$.
The $B$-weight of the corresponding possible litter in $\mathcal
{Z}^{(n,\gamma)}$ is $B_i$, where $i$ is such that
$\sum_{j = 1}^{i-1} B_k < x' L'^{(n)} \leq\sum_{j = 1}^{i} B_j$.
If $\bar{v}'_1 \in\Gamma'_0$, then the clique is ignored in $\bar
{\mathcal{R}}^{(n,\gamma)}$.
If $x' > 1- \bar{\gamma}'$, then the litter in $\mathcal{Z}^{(n,\gamma
)}$ is ignored.
We note that as long as the weight of $\Gamma'_0$ is less than $\bar
{\gamma}L'^{(n)}$, a clique can be ignored in $\bar{\mathcal
{R}}^{(n,\gamma)}$ only if the corresponding litter in $\mathcal
{Z}^{(n,\gamma)}$ is also ignored. Furthermore, the $B$-weight of the
litter in $\mathcal{Z}^{(n,\gamma)}$ is not larger than the $B$-weight
of the clique in $\bar{\mathcal{R}}^{(n,\gamma)}$.

Let the label of $\bar{v}'_1$ be $k$.
We now define
\[
\sigma'^{(n)}_1(i) = \cases{
\sigma'^{(n)}_0(i), &\quad  $\mbox{for $i$ such
that $\sigma'^{(n)}_0(i)<
\sigma'^{(n)}_0(k)$,}$ \vspace*{2pt}
\cr
\sigma'^{(n)}_0(i) -1, & \quad $\mbox{for $i$ such
that $\sigma'^{(n)}_0(i)>
\sigma'^{(n)}_0(k)$,}$ \vspace*{2pt}
\cr
\lfloor\alpha n \rfloor, & \quad $\mbox{for $i=k$.}$}
\]
That is, we give $\bar{v}'_1$ the maximal label and keep the order of
the labels of the other vertices. Furthermore, we add $\bar{v}'_1$ to
the forbidden set, that is, set $\Gamma'_1 = \Gamma'_0 \cup\{\bar
{v}'_1\}$.
We choose the next clique in $\bar{\mathcal{R}}^{(n,\gamma)}$ and
corresponding litter in $\mathcal{Z}^{(n,\gamma)}$, say $\bar{v}'_2$,
in the same way as we choose $\bar{v}'_1$, with $\sigma'^{(n)}_0$
replaced by $\sigma'^{(n)}_1$ and $\Gamma'_0$ replaced by $\Gamma'_1$,
and we continue this process until we have identified all cliques that
$\bar{v}_0$ is part of.

We then pick one of the cliques added to $\bar{\mathcal{R}}^{(n,\gamma
)}$ whose corresponding litter was not ignored in $\mathcal
{Z}^{(n,\gamma)}$. We realise a local epidemic in this group as
follows. Assume that the $B$-weight of the clique is $\bar{b}_1$. Then
let $d'_1$ be $\Poisson(\bar{b}_1 L^{(n)}/(\mu n))$.
Consider a population with $d'_1$ initial susceptible individuals and 1
initial infectious individual, all with infectious period distributed
as $\mathcal{I}(c_1)$, and couple two continuous time epidemics in this
population as follows. Consider the first newly infected individual in
this population. We associate this individual with vertices in $\bar
{\mathcal{R}}^{(n,\gamma)}$ and in $\mathcal{Z}^{(n,\gamma)}$ as
follows. Choose a real number, say $x$, uniformly at random from the
unit interval.
In $\bar{\mathcal{R}}^{(n,\gamma)}$, we try to connect clique $\bar
{v}'_1$ to the vertex with label $i$, which satisfies
\[
\sum_{j \in\mathbb{N}\dvtx \sigma^{(n)}_0(j) < \sigma^{(n)}_0(i)} A_{j} < x L^{(n)}
\leq\sum_{j \in\mathbb{N}\dvtx \sigma^{(n)}_0(j) \leq\sigma
^{(n)}_0(i)} A_{j}.
\]
Suppose that this vertex is $\bar{v}_2$.
The $A$-weight of the possible child in $\mathcal{Z}^{(n,\gamma)}$ is
$A_i$, where $i$ is such that
$\sum_{j = 1}^{i-1} A_j < x L^{(n)} \leq\sum_{j = 1}^{i} A_j$. The
vertex we choose is denoted by $\bar{v}_1$.
If $\bar{v}_1 \in\Gamma_0$, then the vertex is ignored in $\bar
{\mathcal{R}}^{(n,\gamma)}$ and immediately killed.
If $x > 1- \bar{\gamma}$, then the child in $\mathcal{Z}^{(n,\gamma)}$
is ignored.
We note that as long as the weight of $\Gamma_0$ is less than $\bar
{\gamma}L^{(n)}$, a vertex can be ignored in $\bar{\mathcal
{R}}^{(n,\gamma)}$ only if the child in $\mathcal{Z}^{(n,\gamma)}$ is
also ignored. Furthermore, the $A$-weight of the vertex in $\mathcal
{Z}^{(n,\gamma)}$ is not larger than the $A$-weight of the vertex in
$\bar{\mathcal{R}}^{(n,\gamma)}$.

We identify the other vertices infected by local epidemics started by
$v_0$ and the corresponding children in $\mathcal{Z}^{(n,\gamma)}$ as
we have identified the cliques $v_0$ is part of, where at each step the
forbidden set of vertices might grow and the chosen vertex gets the
highest label for the next vertex pick. The infectious period/type
assigned to every vertex (which is not immediately killed) is
distributed as $\mathcal{I}(c_1)$, and coupled vertices get the same
infectious period/type.
We continue in this way until we have identified all vertices infected
by local epidemics started by $v_0$, and we then explore the cliques
those individuals are part of one by one, as before.

The exploration process $\bar{\mathcal{R}}^{(n,\gamma)}$ dominates the
exploration process $\mathcal{Z}^{(n,\gamma)}$ until the total weight
of the forbidden set in $V^{(n)}$ in $\bar{\mathcal{R}}^{(n,\gamma)}$
is at least $\bar{\gamma} L^{(n)}$ or the total weight of the forbidden
set in $V'^{(n)}$ in $\bar{\mathcal{R}}^{(n,\gamma)}$ is at least $\bar
{\gamma} L'^{(n)}$.

Note that we may choose $c_1>0$ small enough such that $\mathbb
{P}(\mathcal{I} < c_1) < \gamma/2$. By the law of large numbers this
implies that $c_1>0$ might be chosen such that the total weight of
vertices in $I(c_1)$ is less than $(\gamma/2) L^{(n)}$ with probability
tending to 1 as $n\to\infty$.
By Lemma $\ref{weightsus}$, we know that the weights of $K^1$, $K^2$,
$K'^1$ and $K'^2$ are each a.s. $o(n)$. We also know that the set of
vertices with label $\geq R'^{(n)}(\gamma/3)$ has total weight at
least $(\gamma/3) L^{(n)}$, and the probability that this total weight
is less than $(\gamma/2) L^{(n)}$ can be made arbitrary close to 1
by choosing $n$ sufficiently large.

If the ordering of the exploration processes $\bar{\mathcal
{R}}^{(n,\gamma)}$ and $\mathcal{Z}^{(n,\gamma)}$ stops because the
total weight of the forbidden set in $V'^{(n)}$ exceeds $\gamma
L'^{(n)}$, then, using Lemma~\ref{killprocess}, the lemma is immediate
with $\eta= \gamma/3$. If this ordering stops because the total weight
of the forbidden set in $V^{(n)}$ exceeds $\gamma L^{(n)}$, then the
total weight of vertices in
$\bar{\mathcal{R}}^{(n,\gamma)}$ that are not in the original forbidden
set exceeds $(\gamma/3) L^{(n)}$.
Hence, in order to prove the lemma we have only to prove that this
implies that the total weight of cliques in this set which contain
vertices in $\bar{\mathcal{R}}^{(n,\gamma)}$ is $\Theta_{p_\nu}(n)$.
Now the fraction of cliques in $V'^{(n)}$ with weight exceeding $\log
n$ converges almost surely to $0$ as $n \to\infty$. It follows that
the number of vertices in $\hat{V}'^{(n)}$ with labels exceeding
$R'^{(n)}(\gamma/3)$ is $\Theta_{p_\nu}(n)$. Hence, by the law of large
numbers, the number of cliques $\hat{V}'^{(n)}$ with labels exceeding
$R'^{(n)}(\gamma/3)$ that are chosen for the expansion of $\bar{\mathcal
{R}}^{(n,\gamma)}$ is $\Theta_{p_\nu}(n)$, which in turn implies that
the total weight of such cliques is also $\Theta_{p_\nu}(n)$, as required.
\end{pf}

\begin{pf*}{Proof of Theorem \ref{finalsizethm}}
We use the notation of Lemma \ref{eplarbra}.
Recall that $\bar{\mathcal{R}}^{(n)} = \bar{\mathcal{R}}^{(n,0)}$ and
that $\mathcal{R}^{(n)} = \mathcal{R}^{(n)}(\omega,\mathcal{I})$ is the
set of ultimately recovered vertices in a population of $n$ individuals.

We first provide bounds for
\begin{eqnarray*}
\mathbb{E}_\omega\Bigl[n^{-1}\bigl|\mathcal{R}^{(n)}\bigr| \Bigm|
\bar{\mathcal {W}}'^{(n)}(c_1)> \eta n\Bigr] &
= & \mathbb{E}_\omega\Biggl[n^{-1}\sum
_{i=1}^n \ind\bigl(v_i \in\mathcal
{R}^{(n)}\bigr)\Bigm| \bar{\mathcal{W}}'^{(n)}(c_1)>
\eta n\Biggr]
\\
& = & \mathbb{P}_{\omega}\bigl(v_1 \in\mathcal{R}^{(n)}
\bigm| \bar {\mathcal{W}}'^{(n)}(c_1)> \eta n
\bigr)
\end{eqnarray*}
and for
\begin{eqnarray*}
&&\mathbb{E}_\omega \Bigl[n^{-2}\bigl|\mathcal{R}^{(n)}\bigr|^2
\Bigm| \bar{\mathcal {W}}'^{(n)}(c_1)> \eta n
\Bigr]
\\
& &\qquad= \mathbb{E}_\omega\Biggl[n^{-2}\sum
_{i=1}^n \sum_{j=1}^n
\ind\bigl(v_i,v_j \in \mathcal{R}^{(n)}\bigr) \Bigm|
\bar{\mathcal{W}}'^{(n)}(c_1)> \eta n\Biggr]
\\
& &\qquad= n^{-1}\mathbb{P}_{\omega}\bigl(v_1 \in
\mathcal{R}^{(n)} \bigm| \bar {\mathcal{W}}'^{(n)}(c_1)>
\eta n\bigr)
\\
& &\qquad\quad{}+ \bigl(1-n^{-1}\bigr)\mathbb{P}_{\omega}
\bigl(v_1,v_2 \in\mathcal{R}^{(n)} \bigm| \bar{
\mathcal{W}}'^{(n)}(c_1)> \eta n\bigr).
\end{eqnarray*}
Let $\varepsilon'>0$.
By Lemma \ref{sizesus} and the asymptotic theory of supercritical
general branching processes \cite{Nerm81} modified to the lattice case,
we have that, if the susceptibility set of $v_1$ in $\hat{G}^{(n)}$
survives for $t_n=\lceil\log\log n\rceil$ generations, then there
exists $c_2>0$ such that the probability that the number and the total
weight of the vertices in this generation are both at least $c_2\log\log n$
is greater than $1-\varepsilon'$ for all sufficiently large $n$. We denote
the set of vertices in generation $t_n$ of this susceptibility set by
$\hat{V}_{t_n}^{(n)}$.
The same holds for the susceptibility set of $v_2$. Furthermore, the
events of survival up to generation $t_n$ of the two susceptibility
sets are asymptotically independent by a birthday problem type of
argument and Lemma \ref{weightsus}.

Conditioned on $\bar{\mathcal{W}}'^{(n)}(c_1)> \eta n$, the law of
large numbers establishes that the following event occurs with
probability exceeding $1-\varepsilon'$. The number of vertices in $\hat
{V}_{t_n}^{(n)}$ that both (i) are in the same clique as an infected
vertex explored in $\bar{\mathcal{R}}^{(n)}$ and (ii) have infectious
period at least $c_1$, grows to infinity as $n \to\infty$.
Since each vertex in $\hat{V}_{t_n}^{(n)}$ is infected independently
with probability at least $1-\mathrm{e}^{-c_1}>0$, we have that
\[
\ind \Bigl(\mathbb{P}_{\omega} \Bigl(v_1 \in
\mathcal{R}^{(n)} \Bigm| \bigl|\hat{\mathcal{S}}^1_{t_n}\bigr|>0,
\bar{\mathcal{W}}'^{(n)}(c_1)> \eta n
\Bigr)>1-2 \varepsilon' \Bigr) \parrow1.
\]
Furthermore, if the susceptibility set of $v_1$ does not survive up to
generation $t_n$ in $\hat{G}^{(n)}$, then Lemma \ref{lemignorehat}
shows that the probability that the initial infective is in $v_1$'s
susceptibility set converges to 0. More precisely, for every $K \in
\mathbb{N}$ we have that
\begin{eqnarray*}
&&\mathbb{P}_{\omega} \Bigl(v_1 \in\mathcal{R}^{(n)}
\Bigm| \bigl|\hat {\mathcal{S}}^1_{t_n}\bigr|=0 \Bigr)\\
&&\qquad =
\frac{\mathbb{P}_{\omega}(v_1 \in\mathcal{R}^{(n)},|\hat{\mathcal
{S}}^1_{t_n}|=0)}{\mathbb{P}_{\omega}(|\hat{\mathcal{S}}^1_{t_n}|=0)}
\\
&&\qquad\leq\frac{\mathbb{P}_{\omega}(v_1 \in\mathcal{R}^{(n)},|\mathcal
{S}^1|\leq K) + \mathbb{P}_{\omega}(|\mathcal{S}^1| > K,|\hat{\mathcal
{S}}^1_{t_n}|=0)}{\mathbb{P}_{\omega}(|\hat{\mathcal{S}}^1_{t_n}|=0)}.
\end{eqnarray*}
The first term in the numerator of the right-hand side of this
inequality converges in probability to 0 as $n \to\infty$, while by
Lemma \ref{lemignorehat} we have that, for every $\varepsilon>0$ and
$\delta>0$, there exists $K \in\mathbb{N}$ such that the second term
in the numerator is smaller than $\varepsilon$ with $\nu$-probability at
least $1-\delta$ for all sufficiently large $n$.
Clearly $\liminf_{n\to\infty} \mathbb{P}_{\omega}
(|\hat{\mathcal {S}}^1_{t_n}| = 0) > 0.$
We therefore conclude that
\[
\mathbb{P}_{\omega}\Bigl(v_1 \in\mathcal{R}^{(n)} \Bigm|
\bigl|\hat{\mathcal {S}}^1_{t_n}\bigr|=0\Bigr)\parrow0.
\]

Note that in the proof of Lemma \ref{eplarbra} we do not use whether $
|\hat{\mathcal{S}}^1_{t_n}|>0$ or not, so
\[
\mathbb{P}_{\omega} \Bigl(\bar{\mathcal{W}}'^{(n)}(c_1)>
\eta n  \Bigm|\bigl |\hat{\mathcal{S}}^1_{t_n}\bigr|>0 \Bigr)-
\mathbb{P}_{\omega} \Bigl(\bar{\mathcal{W}}'^{(n)}(c_1)>
\eta n  \Bigm|\bigl |\hat{\mathcal{S}}^1_{t_n}\bigr|=0 \Bigr) \parrow0.
\]
Therefore, by Bayes's theorem, we find that
\[
\mathbb{P}_{\omega} \bigl(\bigl|\hat{\mathcal{S}}^1_{t_n}\bigr|>0
\bigr)- \mathbb{P}_{\omega} \Bigl(\bigl|\hat{\mathcal{S}}^1_{t_n}\bigr|>0
 \Bigm| \bar {\mathcal{W}}'^{(n)}(c_1)> \eta n
\Bigr) \parrow0,
\]
whence
\[
\mathbb{P}_{\omega} \Bigl(v_1 \in\mathcal{R}^{(n)}
 \Bigm| \bar{\mathcal {W}}'^{(n)}(c_1)> \eta n
\Bigr)- \mathbb{P}_{\omega} \bigl(\bigl|\hat{\mathcal{S}}^1_{t_n}\bigr|>0
\bigr) \parrow0.
\]
Now, arguing as at the start of the proof of Lemma \ref{sizesus},
\[
\mathbb{P}_{\omega} \bigl(\bigl|\hat{\mathcal{S}}^1_{t_n}\bigr|>0
\bigr)- \mathbb{P}_{\omega} \bigl(\bigl|\mathcal{Z}^{b}_{t_n}
\bigl(A^{(n)},B^{(n)},\mathcal {I}\bigr)\bigr|>0\bigr) \parrow0,
\]
while the end of the proof of Lemma \ref{sizesus} shows that
\[
\mathbb{P}_{\omega} \bigl(\bigl|\mathcal{Z}^{b}_{t_n}
\bigl(A^{(n)},B^{(n)},\mathcal {I}\bigr)\bigr|>0\bigr) \parrow
\rho^b(A,B,\mathcal{I}).
\]
Thus, $\mathbb{P}_{\omega}(|\hat{\mathcal{S}}^1_{t_n}|>0) \parrow\rho
^b(A,B,\mathcal{I})$,
whence
\[
\mathbb{E}_\omega\Bigl[n^{-1}\bigl|\mathcal{R}^{(n)}\bigr|  \Bigm|
\bar{\mathcal {W}}'^{(n)}(c_1)> \eta n\Bigr]
\parrow\rho^b(A,B,\mathcal{I}).
\]

Since the first $t_n$ generations of the susceptibility sets of $v_1$
and $v_2$ in $\hat{G}^{(n)}$ are nonoverlapping with probability
tending to 1 as $n \to\infty$, we notice that
\[
\mathbb{P}_{\omega} \bigl(v_1,v_2 \in
\mathcal{R}^{(n)}  \bigm| \bar {\mathcal{W}}'^{(n)}(c_1)>
\eta n\bigr)- \bigl(\mathbb{P}_{\omega} \bigl(v_1 \in
\mathcal{R}^{(n)}  \bigm| \bar{\mathcal{W}}'^{(n)}(c_1)>
\eta n \bigr)\bigr)^2
\parrow0.
\]
This gives that
\[
\mathbb{E}_\omega\Bigl[n^{-2}\bigl|\mathcal{R}^{(n)}\bigr|^2
 \Bigm| \bar{\mathcal {W}}'^{(n)}(c_1)> \eta n
\Bigr] \parrow\bigl(\rho^b(A,B,\mathcal{I})\bigr)^2.
\]
Therefore, $\mathrm{var} (n^{-1}|\mathcal{R}^{(n)}|   \mid  \bar{\mathcal
{W}}'^{(n)}(c_1)> \eta n) \parrow0$,
and we conclude that, for all $\delta>0$,
%
%
\begin{equation}
\label{finimp1} \mathbb{P}_{\omega} \Bigl(\bigl|n^{-1}
\mathcal{R}^{(n)} - \rho^b(A,B,\mathcal {I})\bigr| < \delta  \Bigm|
\bar{\mathcal{W}}'^{(n)}(c_1)> \eta n\Bigr)
\parrow1.
\end{equation}

On the other hand,
we know by Lemma \ref{eplarbra} that
for every $\varepsilon'>0$, there exist constants $\eta>0$ and $c_1>0$
such that
%
%
\begin{equation}
\label{boundneed1} \ind\bigl(\mathbb{P}_{\omega}\bigl(\bar{
\mathcal{W}}'^{(n)}(c_1)> \eta n\bigr) > \rho
(A,B,\mathcal{I}) -\varepsilon'\bigr) \parrow1.
\end{equation}
Furthermore, by Theorem \ref{extinctionthm} there exists $k \in\mathbb
{N}$ such that
%
%
\begin{equation}
\label{boundneed2} \ind \Biggl(\sum_{i=1}^k
\mathbb{P}_{\omega}\bigl(\bigl|\mathcal{R}^{(n)}\bigr|=i\bigr) > 1-
\rho(A,B,\mathcal{I}) -\varepsilon' \Biggr) \parrow1.
\end{equation}
Now observe that
\begin{eqnarray*}
&&\mathbb{P}_{\omega}\bigl(v_1 \in\mathcal{R}^{(n)},
\bar{\mathcal {W}}'^{(n)}(c_1) \leq\eta n
\bigr)\\
&&\qquad \leq \mathbb{P}_{\omega}\bigl(v_1 \in
\mathcal{R}^{(n)}, \bigl|\mathcal {R}^{(n)}\bigr| \leq k \bigr)
 + \mathbb{P}_{\omega}\bigl(\bar{\mathcal{W}}'^{(n)}(c_1)
\leq\eta n, \bigl|\mathcal{R}^{(n)}\bigr| > k \bigr).
\end{eqnarray*}
By exchangeability, the first term on the right-hand side of this
inequality is bounded above by $k/n$ which converges to 0 as $n\to
\infty$.
Further, for any $K \in\mathbb{N}$,
\[
\mathbb{P}_{\omega}\bigl(\bar{\mathcal{W}}'^{(n)}(c_1)
> \eta n, \bigl|\mathcal {R}^{(n)}\bigr| \le K\bigr) \parrow0,
\]
so \eqref{boundneed1} and \eqref{boundneed2} imply that for every
$\varepsilon>0$, there exists $k\in\mathbb{N}$ such that
\[
\ind\bigl(\mathbb{P}_{\omega}\bigl(\bar{\mathcal{W}}'^{(n)}(c_1)
\leq\eta n, \bigl|\mathcal{R}^{(n)}\bigr| > k\bigr)< \varepsilon\bigr) \parrow1.
\]
It follows that
\[
\mathbb{E}_\omega\Bigl[n^{-1}\bigl|\mathcal{R}^{(n)}\bigr|  \Bigm|
\bar{\mathcal {W}}'^{(n)}(c_1) \leq\eta n
\Bigr] \parrow0,
\]
so for every $\delta>0$ we have
%
%
\begin{equation}
\label{finimp2} \mathbb{P}_{\omega} \Bigl(n^{-1}\bigl|
\mathcal{R}^{(n)} \bigr| < \delta  \Bigm| \bar {\mathcal{W}}'^{(n)}(c_1)
\leq\eta n\Bigr) \parrow1.
\end{equation}
Combining \eqref{finimp1} and \eqref{finimp2} completes the proof of
Theorem \ref{finalsizethm}.
\end{pf*}

\section{Extension}\label{disc}

In this paper we study the spread of an SIR epidemic on a random
intersection graph. A variant of the random intersection graph is
proposed in \cite{Newm03}, where a configuration model construction is
used to create the graph. In our terminology and notation, independent
degrees are assigned to vertices in $V$ and~$V'$, where the degrees of
vertices in $V$ are each distributed as a random variable~$D$, and the
degrees of vertices in $V'$ are each distributed as a random variable~$H$. Each vertex in $V \cup V'$ is assigned a number of half-edges
given by its degree. In the auxiliary graph $\mathbb{A}^{(n)}$ the
half-edges of the first $n$ vertices in $V$ are paired uniformly at
random with the first $L^{(n)}$ half-edges in $V'$, where $L^{(n)}$ is
the number of half-edges assigned to the first $n$ vertices in $V$.
Note that the final vertex in $V'$ used in this construction might not
retain its full degree in $\mathbb{A}^{(n)}$.

The forward and backward branching processes can be modified in the
obvious fashion to this setting and equivalent formulae to the key
expressions \eqref{1minfh}, \eqref{elocal} and \eqref{suslocal} in
Appendix \ref{application} can be derived, thus facilitating
calculation of the threshold parameter $R_*$ and survival probabilities
of these branching processes. We expect that, under mild conditions on
the distributions of $D$ and $H$, theorems corresponding to
Theorems \ref{extinctionthmu}--\ref{finalsizecdthm} hold for this model.
Some additional dependencies arise since connecting to a vertex takes
away one of its available half-edges; however, we anticipate that the
impact of those dependencies is very small.

\begin{appendix}\label{app}
\section{Proof of Lemma \lowercase{\protect\texorpdfstring{\ref{unilem}}{4.1}}}
\label{appendix1}

In order to prove Lemma \ref{unilem} we use an idea from Riordan \cite
{Rior05}. He considers the corresponding problem for a class of
multitype branching processes having type space $(0,1]$ in which, in
crude terms, the number of children having type in any specified
interval spawned by an individual of type $x$ tends to infinity as $x
\downarrow0$. We cannot use the result in \cite{Rior05} directly
because in our model the number of children of an individual of type $x$ tends to
zero as $x \downarrow0$. However, we can apply the idea in \cite
{Rior05} to a branching process that is intimately related to $\tilde
{\mathcal{Z}}^f$, which we now describe, and exploit a connection
between the functional $\Phi(\tilde{\rho})(x)$ and an equivalent
functional for the new branching process to obtain the desired result.

Recall that in the branching process $\tilde{\mathcal{Z}}^f$,
individuals arise in litters, with a litter being distributed as the
set of individuals that are infected in a local (single-clique)
epidemic, not including the individual who triggers that local
epidemic. Consider such a local epidemic and suppose that the clique
contains the initial infective, $i^*$ say, and $m$ susceptible
individuals. The final outcome of the local epidemic can be obtained
using the corresponding epidemic generated graph, by first determining
the number of individuals, $a$ say, that are contacted directly by the
initial infective, and then considering the epidemic, $\mathcal
{E}_{s,a}$ say, triggered by those $a$ individuals among the remaining
$s=m-a$ susceptibles in the clique. Suppose that the epidemic $\mathcal
{E}_{s,a}$ infects $T_{s,a}$ individuals, in addition to its $a$
initial infectives.
[Thus, in the notation of Section~\ref{earlystages}, $T(m)=a+T_{s,a}$.] Note that the infectious periods of
the $a$ initial infectives in $\mathcal{E}_{s,a}$ are i.i.d. copies of
$\mathcal{I}$ and also that, conditional upon the value of $(s,a)$,
such epidemics in different cliques are mutually independent, even if
they arise from the same initial infective $i^*$. Thus the epidemic
$\mathcal{E}^{(n)}$ may be approximated by a branching process of
litters, in which each litter is typed by its value of $(s,a)$ and its
offspring are the litters triggered by the $a+T_{s,a}$ infectives in
the corresponding $\mathcal{E}_{s,a}$. Let $\hat{\mathcal{Z}}^f$ be the
branching process derived in this fashion corresponding to the
branching process $\tilde{\mathcal{Z}}^f$. Clearly, litters with $a=0$
are superfluous, so the type space for $\hat{\mathcal{Z}}^f$ may be
taken to be $\hat{\mathcal{T}}=\{(s,a)\dvtx s \in\mathbb{Z}_+,a \in\mathbb
{N}\} $.

We now derive the next-generation functional [i.e., the analogue of
$\tilde{\Phi}(h)(x)$] associated with $\hat{\mathcal{Z}}^f$.
For notational convenience we assume that $\mathcal{I}$ has an
absolutely continuous distribution, though this is not essential, and
the argument (and the proof of Lemma \ref{unilem} below) can be
extended to the general case.
Let $\hat{h}(s,a)\dvtx \hat{\mathcal{T}} \to[0,1]$ be a measurable test function,
and suppose that litters are marked independently with a dagger (to
distinguish from the marks used on $\mathcal{Z}^f$), with a litter of
type $(s,a)$ being marked with probability $\hat{h}(s,a)$. Let $\hat
{\Phi}(\hat{h})(s,a)$ be the probability that a litter of type $(s,a)$
directly spawns at least one litter that is marked with a dagger.

Consider the epidemic $\mathcal{E}_{s,a}$ described above and suppose
that $T_{s,a}=k$.
Let $x_{-a+1},x_{-a+2}, \ldots, x_0$ and $x_1, x_2,\ldots, x_k$ denote
the lengths of the infectious periods of the $a$ initial infectives and
the $k$ subsequently infected individuals, respectively.
Let $p_{s,a}(k; x_{-a+1},x_{-a+2}, \ldots, x_0, x_1,\ldots, x_k)$ be
the probability density that $T_{s,a}=k$ and the infectious periods are
given by
$x_{-a+1},\ldots, x_k$. Then,
%
%
\begin{eqnarray}
\label{Phifirst} \hat{\Phi}(\hat{h}) (s,a)&=&1- \sum_{k=0}^s
\int_{(0,\infty]^{a+k}} p_{s,a}(k;x_{-a+1},\ldots,
x_k)
\nonumber
\\[-8pt]
\\[-8pt]
\nonumber
&&\hspace*{75pt}{}\times\prod_{i=-a+1}^k
P_{\hat{h}}(x_i) \,\mathrm{d}x_{-a+1}\cdots\mathrm{d}x_{k},
\end{eqnarray}
where $P_{\hat{h}}(x)$ is the probability that an individual, $i^*$
say, having infectious period of length $x$, does not spawn a litter
which is marked with a dagger.

To determine $P_{\hat{h}}(x)$, note first that $i^*$ belongs to $\check
{X}\sim\mathcal{MP}(\tilde{A})$ cliques, not counting the clique it was
infected through, and consider one such clique. Besides $i^*$, this
clique contains $\check{Y}\sim\mathcal{MP}(\tilde{B})$ individuals.
Suppose that $\tilde{B}=b$, then $\check{Y}\sim\mathcal{P}(b)$ and
these $\check{Y}$ individuals are infected independently by $i^*$, each
with probability
$1-\mathrm{e}^x$. Thus, given $\tilde{B}=b$, the litter has type $(s,a)$, where
$s$ and $a$ are independent realisations of the Poisson random
variables $\mathcal{P}(b\mathrm{e}^x)$ and $\mathcal{P}(b(1-\mathrm{e}^x))$,
respectively. Hence, the unconditional probability
that this litter is not marked with a dagger is
\[
\mathbb{E} \Biggl[ \sum_{s=0}^{\infty} \sum
_{a=0}^{\infty} \frac{(\mathrm{e}^{-x}\tilde{B})^{s}}{s!}
\frac{((1-\mathrm{e}^{-x})\tilde{B})^{a}}{a!} \mathrm{e}^{-\tilde{B}} \bigl(1-\hat{h}(s,a)\bigr) \Biggr],
\]
where $\hat{h}(s,0)=0$ $(s \in\mathbb{Z}_+)$. Given that $i^*$ has
infectious period $x$, the local epidemics it initiates in the above
$\check{X}$ cliques are independent, so
%
%
\begin{equation}
\label{Phhatx} P_{\hat{h}}(x)=\phi_{\tilde{A}} \Biggl(\mathbb{E}
\Biggl[ \sum_{s=0}^{\infty
} \sum
_{a=0}^{\infty} \frac{(\mathrm{e}^{-x}\tilde{B})^{s}}{s!} \frac
{((1-\mathrm{e}^{-x})\tilde{B})^{a}}{a!} \mathrm{e}^{-\tilde{B}}\hat {h}(s,a) \Biggr] \Biggr).
\end{equation}

Let $\hat{\rho}(s,a)$ be the survival probability of the branching
process $\hat{\mathcal{Z}}^f$, given that the initial litter has type
$(s,a)$. Then $\hat{\rho}$ is the maximal solution of $\hat{\rho}(s,a)
= \hat{\Phi}(\hat{\rho})(s,a)$. If either $s \to\infty$ or $a \to
\infty$, then for any $(s',a') \in\hat{\mathcal{T}}$ and any $K\in
\mathbb{N}$, the probability that a type-$(s,a)$ individual has at
least $K$ type-$(s',a')$ children in the next generation tends to 1.
Furthermore, it is easy to deduce that for any $(s,a), (s',a') \in\hat
{\mathcal{T}}$, the number of type-$(s',a')$ children an individual of
type $(s,a)$ begets is nonzero with positive probability, so $\hat
{\mathcal{Z}}^f$ is irreducible. Using the same argument as
in pages 911--912 of \cite{Rior05}, we conclude that there is at most one
nonzero solution of
$\hat{\rho}(s,a) = \hat{\Phi}(\hat{\rho})(s,a)$.

Recall that Lemma \ref{unilem} states that there is at most one
nonzero solution $\tilde{\rho}(x)$ of the functional equation $\tilde
{\rho}(x) = \tilde{\Phi}(\tilde{\rho})(x)$.
To prove this it is useful to derive an alternative expression for
$\tilde{\Phi}(h)(x)$. Suppose that the ancestor, $i^*$ say, in
$\tilde{\mathcal{Z}}^f$ has infectious period of length $x$. By
conditioning on the size of and the number of people directly infected
by $i^*$ in a given clique, the probability that $i^*$ has no marked
child in that clique is given by
%
%
\begin{equation}
\label{nomarkchild} \mathbb{E} \Biggl[ \sum_{s=0}^{\infty}
\sum_{a=0}^{\infty} \frac{(\mathrm{e}^{-x}\tilde{B})^{s}}{s!}
\frac{((1-\mathrm{e}^{-x})\tilde{B})^{a}}{a!} \mathrm{e}^{-\tilde{B}} A(s,a,h) \Biggr],
\end{equation}
where
%
%
\begin{eqnarray}
\label{APhihelp} A(s,a,h)&=&\sum_{k=0}^s
\int_{(0,\infty]^{a+k}} p_{s,a}(k;x_{-a+1},\ldots,
x_k)
\nonumber
\\[-8pt]
\\[-8pt]
\nonumber
&&\hspace*{56pt}{}\times\prod_{i=-a+1}^k
\bigl(1-h(x_i)\bigr) \,\mathrm{d}x_{-a+1}\cdots\mathrm{d}x_{k}.
\end{eqnarray}
Hence, since $i^*$ belongs to $\check{X}\sim\mathcal{MP}(\tilde{A})$
further cliques (in addition to the one it was infected through),
%
%
\begin{eqnarray}
\label{Phitransform} \qquad &&\tilde{\Phi}(h) (x)
\nonumber
\\[-4pt]
\\[-12pt]
\nonumber
&&\qquad=1-\phi_{\tilde{A}} \Biggl(\mathbb{E}
\Biggl[ \sum_{s=0}^{\infty} \sum
_{a=0}^{\infty} \frac{(\mathrm{e}^{-x}\tilde
{B})^{s}}{s!} \frac{((1-\mathrm{e}^{-x})\tilde{B})^{a}}{a!} \mathrm{e}^{-\tilde{B}} \bigl(1-A(s,a,h)\bigr) \Biggr] \Biggr).
\end{eqnarray}

Suppose that
%
%
\begin{equation}
h(x) = \tilde{\Phi}(h) (x). \label{hPhitilde}
\end{equation}
Then \eqref{Phitransform} and \eqref{APhihelp} imply that
\begin{eqnarray*}
&&A(s,a,h)\\
&&\qquad= \sum_{k=0}^s \int
_{(0,\infty]^{a+k}} p_{s,a}(k;x_{-a+1},\ldots,
x_k)
\\
&&\qquad\quad{}\times\prod_{i=-a+1}^k \phi_{\tilde{A}}
\Biggl(\mathbb{E} \Biggl[ \sum_{s_i=0}^{\infty}
\sum_{a_i=0}^{\infty} \frac{(\mathrm{e}^{-x_i}\tilde
{B})^{s_i}}{s_i!}
\frac{((1-\mathrm{e}^{-x_i})\tilde{B})^{a_i}}{a_i!}\\
&&\qquad\quad\hspace*{128pt}{}\times \mathrm{e}^{-\tilde{B}} \bigl(1-A(s_i,a_i,h)
\bigr) \Biggr] \Biggr)\,
\mathrm{d}x_{-a+1}\cdots\mathrm{d}x_{k}.
\end{eqnarray*}
Thus, by \eqref{Phifirst} and \eqref{Phhatx}, if $h$ is treated as
fixed, $\hat{h}(s,a)=1-A(s,a,h)$ satisfies
%
%
\begin{equation}
\hat{h}(s,a)=\hat{\Phi}(\hat{h}) (s,a). \label{hPhihat}
\end{equation}
Let $h$ be a nonzero (i.e., not identically zero) solution of \eqref
{hPhitilde}, assuming such a solution exists. Then $\hat{h}$ must be
the unique nonzero solution of \eqref{hPhihat}, $\hat{\rho}$ say.
[Note that if $\hat{h}$ is identically zero, then \eqref{Phitransform}
and \eqref{hPhitilde} imply that $h$ is identically zero.] Thus $\hat
{h}(s,a)=1-A(s,a,h)$ is independent of $h$, and $h(x)$ is given by the
right-hand side of \eqref{Phitransform} with $A(s,a,h)$ replaced by
$1-\hat{\rho}(s,a)$, which proves the lemma.


\section{Calculation of properties of forward and backward branching processes}
\label{FSRV}

In this appendix we give expressions for properties of the forward and
backward branching processes, $\mathcal{Z}^f$ and $\mathcal{Z}^b$,
which enable the threshold parameter $R_*$ and the survival
probabilities $\rho$ and $\rho^b$ which appear in Theorem \ref{finalsizecdthm} to be computed. These expressions rest on results for
the final outcome of homogeneously mixing SIR epidemic models. In a
series of papers (see, e.g., \cite{Picard90}), Lef\`{e}vre and Picard
showed that many quantities related to the final outcome of an SIR
epidemic can be expressed compactly in terms of Gontcharoff
polynomials, and these were extended by Ball and O'Neill \cite{Ball99}
to include so-called general final state random variables. The latter
are required to compute functionals associated with the forward
branching process $\mathcal{Z}^f$. Results for
homogeneously mixing SIR epidemic models are outlined in Section~\ref{RHMP}
and their application to computing properties of $\mathcal{Z}^f$ and
$\mathcal{Z}^b$ is described in Section~\ref{application}.

\subsection{Results for homogeneously mixing populations}
\label{RHMP}
In this section we give a restatement of Theorem 4.2 from Ball and
O'Neill \cite{Ball99}, adapted to the purposes of this paper (cf. \cite
{Ball10}). We note that Ball and O'Neill provide appreciably more
general results than their Theorem 4.2.
In order to state the theorem, we need the following notation.
We consider an SIR epidemic in a homogeneously mixing population with
$s$ initial susceptible individuals and $a$ initial infectious
individuals. The initial susceptible individuals are labeled $1,2,
\ldots,s$ and the initial infectious individuals have labels
$-a+1,-a+2,\ldots, 0$. The random variable $\mathcal{I}_i$ represents
the infectious period that individual $i$ will have if it becomes infected.
Thus, the probability that individual $i$, if infected, ultimately has
an infectious contact with individual $j$ is $1-\mathrm{e}^{-\mathcal
{I}_i}$. (As before, infectious contacts between pairs of individuals
are governed by independent unit-rate Poisson processes.) We assume
that the random variables $(\mathcal{I}_i, i = -a+1,-a+2,\ldots, s)$
are independent and all distributed as $\mathcal{I}$; they are also
independent of the Poisson processes describing infectious contacts.
Note that this model is the epidemic $\mathcal{E}_{s,a}$ introduced in
Appendix~\ref{appendix1}.
Let $\hat{h}(x)\dvtx (0,\infty] \to[0,\infty]$ be a measurable function
(the relevant measures are clear from the context) and $\theta>0$.
Furthermore, let
\[
\hat{U}=\hat{U}(\hat{h},\theta) = \bigl(\hat{u}_i(\hat{h},\theta), i
\in \mathbb{Z}_+\bigr) = (\hat{u}_i, i \in\mathbb{Z}_+)
\]
be an infinite vector, where $\hat{u}_k = \mathbb{E}[\mathrm{e}^{-k
\mathcal{I}} \mathrm{e}^{-\theta\hat{h}(\mathcal{I})}]$.
Let $\mathcal{R}$ be the set of ultimately recovered individuals in
$\mathcal{E}_{m,a}$, including the initial infectives as well as any
initial susceptibles that become infected.

The Gontcharoff polynomials $G_m(x|\hat{U}), m \in\mathbb{Z}_+$, are
defined recursively by
%
%
\begin{equation}
\label{gonrecurs} \frac{x^m}{m!}= \sum_{k=0}^{m}
\frac{(\hat{u}_k)^{m-k}}{(m-k)!}G_k(x|\hat{U})
\end{equation}
for $m \in\mathbb{Z}_+$. We note that $G_m(x|\hat{U})$ is a polynomial
of order $m$, which depends on $\hat{u}_0,\hat{u}_1, \ldots, \hat{u}_{m-1}$.
Some properties of Gontcharoff polynomials are mentioned in Section~2
of \cite{Ball99}. In this paper we use only \eqref{gonrecurs} and
%
%
\begin{equation}
\label{goninteg} G_m(x|\hat{U})=\int_{\hat{u}_0}^{x}
\int_{\hat{u}_{1}}^{\xi_{0}} \cdots \int_{\hat{u}_{m-1}}^{\xi_{m-2}}
\,\mathrm{d}\xi_{m-1}\cdots\mathrm{d}\xi_1 \,\mathrm{d}
\xi_0
\end{equation}
for $m \in\mathbb{Z}_+$. The following theorem is a special case of
Theorem 4.2 in \cite{Ball99}, which allows $\hat{h}$ to be random.

\begin{theorem}\label{thmfrankphil}
For $\mathcal{R}$, $\hat{h}$ and $\hat{U}$ as above, we have
\[
\mathbb{E}\bigl[x^{s+a-|\mathcal{R}|}\mathrm{e}^{-\theta\sum_{i \in\mathcal
{R}} \hat{h}(\mathcal{I}_i)}\bigr] = \sum
_{k=0}^s \frac{s!}{(s-k)!}(\hat{u}_k)^{s-k+a}G_k(x|
\hat{U}).
\]
\end{theorem}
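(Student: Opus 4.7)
The plan is to combine the independent Poisson contact structure with the defining Gontcharoff recursion \eqref{gonrecurs}: first decompose the LHS by conditioning on the final recovered set and the infectious periods; then expand the resulting $x^{s-k}$ in the Gontcharoff basis; and finally identify each coefficient via a total-probability identity applied to an auxiliary epidemic.

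First I would condition on $\mathcal{R}_s:=\mathcal{R}\cap\{1,\ldots,s\}$ and on the infectious periods $(\mathcal{I}_i)$. For any subset $\mathcal{A}\subseteq\{1,\ldots,s\}$ with $|\mathcal{A}|=k$, the event $\{\mathcal{R}_s=\mathcal{A}\}$ factors, by the independence of the unit-rate Poisson contact processes, into (i) the event that the epidemic restricted to $\{-a+1,\ldots,0\}\cup\mathcal{A}$ infects all of $\mathcal{A}$, whose conditional probability I denote $\pi_k(\mathcal{I}_{-a+1},\ldots,\mathcal{I}_0,\mathcal{I}_{\mathcal{A}})$, and (ii) the no-escape event of conditional probability $\exp(-(s-k)\sum_{i\in\{-a+1,\ldots,0\}\cup\mathcal{A}}\mathcal{I}_i)$ that no infectious contact from this set reaches any of the $s-k$ outside susceptibles. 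Summing over $\mathcal{A}$, taking expectations, and using exchangeability of the susceptibles together with the i.i.d.\ property of the $(\mathcal{I}_i)$, I would obtain
\begin{equation*}
\mathbb{E}[x^{s+a-|\mathcal{R}|}{\rm e}^{-\theta\sum_{i\in\mathcal{R}}\hat{h}(\mathcal{I}_i)}]=\sum_{k=0}^{s}\binom{s}{k}x^{s-k}Q_k(s-k),
\end{equation*}
where $Q_k(j):=\mathbb{E}\bigl[\pi_k(\mathcal{I}_{-a+1},\ldots,\mathcal{I}_k)\prod_{i=-a+1}^{k}{\rm e}^{-j\mathcal{I}_i-\theta\hat{h}(\mathcal{I}_i)}\bigr]$.

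Next I would invoke the Gontcharoff recursion \eqref{gonrecurs} in the form $x^{s-k}=(s-k)!\sum_{j=0}^{s-k}\hat{u}_j^{s-k-j}G_j(x|\hat{U})/(s-k-j)!$, substitute it, and swap the summations over $k$ and $j$. A short calculation then yields
\begin{equation*}
\sum_{k=0}^{s}\binom{s}{k}x^{s-k}Q_k(s-k)=\sum_{j=0}^{s}\frac{s!}{(s-j)!}\,G_j(x|\hat{U})\,T_j,\quad T_j:=\sum_{k=0}^{s-j}\binom{s-j}{k}\hat{u}_j^{s-j-k}Q_k(s-k),
\end{equation*}
so that matching against the RHS of the theorem reduces the claim to proving $T_j=\hat{u}_j^{s-j+a}$ for every $0\leq j\leq s$.

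The heart of the argument, and the main obstacle, is this last identity. I would proceed by reading $\pi_k(\mathcal{I}_{-a+1},\ldots,\mathcal{I}_k)\exp(-(s-j-k)\sum_{i=-a+1}^{k}\mathcal{I}_i)$, obtained from the splitting $s-k=j+(s-j-k)$ inside $Q_k(s-k)$, as the conditional probability, in an auxiliary $(s-j,a)$-epidemic with susceptibles $\{1,\ldots,s-j\}$, that $\mathcal{R}_s=\{1,\ldots,k\}$. Representing the factor $\hat{u}_j^{s-j-k}$ as the expectation of $\prod_{i=k+1}^{s-j}{\rm e}^{-j\mathcal{I}_i-\theta\hat{h}(\mathcal{I}_i)}$ over $s-j-k$ auxiliary i.i.d.\ copies of $\mathcal{I}$, and invoking exchangeability of the susceptibles of this auxiliary epidemic, I can rewrite
\begin{equation*}
\binom{s-j}{k}\hat{u}_j^{s-j-k}Q_k(s-k)=\mathbb{E}\bigl[\mathbb{P}(|\mathcal{R}_s^{(s-j,a)}|=k\mid\mathcal{I})\textstyle\prod_{i=-a+1}^{s-j}{\rm e}^{-j\mathcal{I}_i-\theta\hat{h}(\mathcal{I}_i)}\bigr].
\end{equation*}
Summing over $k$ then collapses $\sum_{k=0}^{s-j}\mathbb{P}(|\mathcal{R}_s^{(s-j,a)}|=k\mid\mathcal{I})$ to $1$ by total probability, leaving $T_j=\mathbb{E}[\prod_{i=-a+1}^{s-j}{\rm e}^{-j\mathcal{I}_i-\theta\hat{h}(\mathcal{I}_i)}]=\hat{u}_j^{s-j+a}$, as required. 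The crux is precisely this bookkeeping: the ``dummy expansion'' of $\hat{u}_j^{s-j-k}$ into auxiliary i.i.d.\ infectious-period factors, combined with exchangeability and total probability in the $(s-j,a)$-epidemic, is what collapses an a priori complicated alternating coefficient into the simple power $\hat{u}_j^{s-j+a}$.
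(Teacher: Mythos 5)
Your proof is correct. Note first that the paper does not actually prove this statement: it is presented as a special case of Theorem~4.2 of Ball and O'Neill \cite{Ball99}, so there is no in-paper argument to compare against; what you have written is, in substance, a self-contained reconstruction of the classical Lef\`evre--Picard/Ball--O'Neill machinery. Your factorisation of $\{\mathcal{R}\cap\{1,\dots,s\}=\mathcal{A}\}$ into a within-set epidemic event and a no-escape event of probability $\exp(-(s-k)\sum_{i}\mathcal{I}_i)$, conditionally independent given the infectious periods, is the standard separation lemma for SIR final outcomes, and your identity $T_j=\hat{u}_j^{\,s-j+a}$ is precisely the triangular Wald-type system (with $l=s-j$ susceptibles) that underlies all such Gontcharoff formulas. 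Where the literature proof typically establishes that triangular system first and then inverts it via the recursion \eqref{gonrecurs}, you run the computation in the forward direction --- expand $x^{s-k}$ in the Gontcharoff basis and verify each coefficient directly --- which has the minor advantage that no uniqueness-of-expansion argument is needed: proving $T_j=\hat{u}_j^{\,s-j+a}$ for all $j$ already yields the displayed identity. The individual steps all check out: the combinatorial identity $\binom{s}{k}\frac{(s-k)!}{(s-k-j)!}=\frac{s!}{(s-j)!}\binom{s-j}{k}$ behind the swap of summations is right; the reading of $\pi_k\,{\rm e}^{-(s-j-k)\sum_{i=-a+1}^{k}\mathcal{I}_i}$ as $\mathbb{P}\bigl(\mathcal{R}_s^{(s-j,a)}=\{1,\dots,k\}\mid\mathcal{I}\bigr)$ is the separation lemma applied with $s-j$ susceptibles; and the dummy expansion of $\hat{u}_j^{\,s-j-k}$ into $s-j-k$ auxiliary i.i.d.\ infectious periods makes the total-probability collapse legitimate because the weight $\prod_{i=-a+1}^{s-j}{\rm e}^{-j\mathcal{I}_i-\theta\hat{h}(\mathcal{I}_i)}$ is symmetric in the susceptibles' periods, so each set $\mathcal{A}$ of size $k$ contributes equally (I verified the whole chain on the case $s=a=1$, where both sides equal $x\hat{u}_1+\hat{u}_0^2-\hat{u}_0\hat{u}_1$). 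The only points worth making explicit in a final write-up are (a) that your events (i) and (ii) are measurable with respect to disjoint, conditionally independent families of contact indicators given the $\mathcal{I}_i$ --- cleanest in the Epidemic Generated Graph formulation of Section~\ref{SIRepidemics} --- and (b) that the auxiliary periods $\mathcal{I}_{k+1},\dots,\mathcal{I}_{s-j}$ must be taken independent of everything else; both are cosmetic, not gaps.
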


We use the following corollary of this theorem.
%
\begin{corollary}\label{corfrankphil}
Let $U= U(h) =(u_i(h), i \in\mathbb{Z}_+) =(u_i, i \in\mathbb{Z}_+)$,
where $u_i =\mathbb{E}[\mathrm{e}^{-i \mathcal{I}}(1-h(\mathcal{I}))]$ and
$h(x)\dvtx (0,\infty] \to[0,1]$ is Borel-measurable, and let $\mathcal{R}$
be as above. Then
%
%
\begin{equation}
\label{coreq} \mathbb{E}\biggl[\prod_{i \in\mathcal{R}}
\bigl(1-h(\mathcal{I}_i)\bigr)\biggr]= \sum
_{k=0}^s \frac{s!}{(s-k)!} (u_k)^{s-k+a}G_k(1|U).
\end{equation}
\end{corollary}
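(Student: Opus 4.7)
The plan is to deduce the corollary from Theorem~\ref{thmfrankphil} by a direct substitution of parameters, since the right-hand side of~\eqref{coreq} is exactly what Theorem~\ref{thmfrankphil} yields when the input vector $\hat{U}$ is forced to coincide with $U$ and the evaluation point is $x=1$.

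First I would specialise the free parameters in Theorem~\ref{thmfrankphil} by taking $\theta = 1$, $x = 1$, and choosing the test function
\[
\hat{h}(y) = -\log(1-h(y)), \qquad y \in (0,\infty],
\]
with the convention $-\log 0 = \infty$, so that $\hat{h}:(0,\infty]\to[0,\infty]$ is measurable (as the composition of the measurable $h$ with a continuous map on $[0,1)$, extended by $\infty$ on $\{h=1\}$). With this choice, ${\rm e}^{-\hat{h}(y)} = 1-h(y)$ for every $y$, and hence
\[
{\rm e}^{-\theta \sum_{i \in \mathcal{R}} \hat{h}(\mathcal{I}_i)} = \prod_{i \in \mathcal{R}}(1-h(\mathcal{I}_i)).
\]
Moreover, the vector $\hat{U} = \hat{U}(\hat{h},1)$ satisfies
\[
\hat{u}_k = \mathbb{E}\left[{\rm e}^{-k\mathcal{I}}{\rm e}^{-\hat{h}(\mathcal{I})}\right] = \mathbb{E}\left[{\rm e}^{-k\mathcal{I}}(1-h(\mathcal{I}))\right] = u_k,
\]
so $\hat{U} = U$ and in particular $G_k(1\,|\,\hat{U}) = G_k(1\,|\,U)$ for each $k$.

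Substituting these identifications into the conclusion of Theorem~\ref{thmfrankphil} and noting that $1^{s+a-|\mathcal{R}|} = 1$ on the left-hand side immediately gives
\[
\mathbb{E}\Big[\prod_{i \in \mathcal{R}}(1-h(\mathcal{I}_i))\Big]
= \sum_{k=0}^{s}\frac{s!}{(s-k)!}(u_k)^{s-k+a}G_k(1\,|\,U),
\]
which is~\eqref{coreq}. There is essentially no obstacle here: the only mildly delicate point is handling the case $h(y)=1$ (so $\hat{h}(y)=\infty$), which I would address by the standard convention ${\rm e}^{-\infty}=0$, consistent with the interpretation of $\hat{h}$ as a measurable map into $[0,\infty]$ already allowed in the statement of Theorem~\ref{thmfrankphil}. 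Alternatively, one can first prove~\eqref{coreq} for $h$ bounded above by $1-\delta$ (so $\hat{h}$ is bounded) and then pass to the limit $\delta\downarrow 0$ by monotone (or dominated) convergence on both sides, since each $G_k(1\,|\,U)$ depends continuously on the finitely many entries $u_0,\dots,u_{k-1}$.
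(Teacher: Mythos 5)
Your proposal is correct and takes essentially the same approach as the paper, whose entire proof is the one-line substitution $x=\theta=1$ and $\hat{h}=-\log(1-h)$ in Theorem~\ref{thmfrankphil}. Your additional care over the case $h(y)=1$ (via the convention ${\rm e}^{-\infty}=0$, or truncation and a limiting argument) is a sensible elaboration of a point the paper leaves implicit, but it does not change the argument.
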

\begin{pf} Set $x=\theta=1$ and $\hat{h} = -\log(1-h)$ in Theorem \ref{thmfrankphil}.
\end{pf}

Recall the random variable $T(m)$ introduced in Section~\ref{earlystages}. In the present notation, $T(m)$ is the size of the
epidemic $\mathcal{E}_{m,1}$, not including the initial infective.
The\vadjust{\goodbreak}
mean of $T(m)$ can be expressed in terms of Gontcharoff polynomials as
follows (see, e.g., equation (3.6) of \cite{Ball97}):
%
%
\begin{eqnarray}
\label{meansize} \mathbb{E}\bigl[T(m)\bigr]=m-\sum_{k=1}^m
\frac{m!}{(m-k)!} (v_{k-1})^{m+1-k} G_{k-1}(1|V)
\nonumber
\\[-10pt]
\\[-10pt]
\eqntext{(m=1,2,\ldots),}
\end{eqnarray}
where $v_{k} = \mathbb{E}[\mathrm{e}^{-(k+1) \mathcal{I}}]$ and $V=(v_i,
i\in\mathbb{Z}_+)$.

The distribution of the size of the local susceptibility set of an
individual can also be expressed using Gontcharoff polynomials.
Recall from Section~\ref{finaloutcome} that $S(m)$ is the size of the
local susceptibility set of an individual in a clique of size $m+1$,
where $S(m)$ does \emph{not} include the individual in question.
As in Section~3 of~\cite{Ball10}, we have
%
%
\begin{equation}
\label{backwdist} \mathbb{P}\bigl(S(m)=k\bigr) = \frac{m!}{(m-k)!}
(v_k)^{m-k} G_k(1|V)\qquad (k=0,1,\ldots, m),
\end{equation}
where $v_k$ and $V$ are as in \eqref{meansize}.

\subsection{Application to branching processes $\mathcal{Z}^f$ and
$\mathcal{Z}^b$}
\label{application}
Let $h$ and $U= U(h)$ be as in Corollary \ref{corfrankphil} and suppose
that individuals in
$\mathcal{E}_{s,a}$ are marked independently, with individual $i$ being
marked with probability $h(\mathcal{I}_i)$ $(i=-a+1,-a+2,\ldots, s)$.
Then \eqref{coreq} gives the probability that the epidemic $\mathcal
{E}_{s,a}$ contains no marked infective. Recall from Section~\ref{forwardBP}
that $F(h)(x)$ is the probability that the ancestor
in $\mathcal{Z}^f$ has at least one marked child arising from the local
epidemic in a given clique. Arguing as in the derivation of \eqref
{nomarkchild} gives, after repeatedly using Fubini's theorem [note that
$G_k(1|U)\ge0$ for all $k$, using \eqref{goninteg} and the fact that
$(u_k \in[0,1])$ is decreasing in $k$],
%
%
\begin{eqnarray}
\label{1minfh}
&&1-F(h) (x) \nonumber\\[-1pt]
&&\qquad = \mathbb{E} \Biggl[ \sum
_{s=0}^{\infty}\sum_{a=0}^{\infty}
\frac{\mathrm{e}^{-xs}\tilde{B}^s}{s!} \frac{(1-\mathrm{e}^{-x})^a \tilde{B}^a}{a!}\mathrm{e}^{-\tilde{B}}\sum
_{k=0}^s \frac{s!}{(s-k)!} (u_k)^{s-k+a}
G_k(1|U) \Biggr]\hspace*{-15pt}
\nonumber
\\[-1pt]
 &&\qquad =  \mathbb{E} \Biggl[\sum_{s=0}^{\infty}
\sum_{k=0}^s \frac{\mathrm{e}^{-xs} \tilde{B}^s}{s!}
\frac{s!}{(s-k)!} (u_k)^{s-k} G_k(1|U) \mathrm{e}^{-\tilde{B}(1-u_k(1-\mathrm{e}^{-x}))} \Biggr]
\nonumber
\\[-8pt]
\\[-8pt]
\nonumber
 &&\qquad =  \mathbb{E} \Biggl[\sum_{k=0}^{\infty}
\sum_{s=k}^{\infty} \frac
{\mathrm{e}^{-xs} \tilde{B}^s}{(s-k)!}
(u_k)^{s-k} G_k(1|U) \mathrm{e}^{-\tilde{B}(1-u_k(1-\mathrm{e}^{-x}))}
\Biggr]
\\[-1pt]
 &&\qquad =  \mathbb{E} \Biggl[ \sum_{k=0}^{\infty}
\mathrm{e}^{-xk} \tilde{B}^k \mathrm{e}^{-\tilde{B}(1-u_k)}
G_k(1|U) \Biggr]
\nonumber
\\[-1pt]
 &&\qquad =  \sum_{k=0}^{\infty} \bigl(-\mathrm{e}^{-x}\bigr)^k \phi_{\tilde
{B}}^{(k)}(1-u_k)
G_k(1|U),\nonumber
\end{eqnarray}
where $\phi_{\tilde{B}}^{(k)}$ is the $k$th derivative of
$\phi_{\tilde{B}}$.\vadjust{\goodbreak}

Finally, we derive expressions for $\mathbb{E}_{\check{Y}}[\mathbb
{E}[T(\check{Y})\mid \check{Y}]]$ and $\mathbb{E}_{\check{Y}}[f_{S(\check
{Y})|\check{Y}}(s)]$, where $\check{Y}\sim\mathcal{MP}(\tilde{B})$,
which are required to compute $R_*$ and $\rho^b$; see \eqref{R*def}
and \eqref{rhob}, respectively.\vspace*{1pt} Recall that $(\check{Y}|\tilde{B}=b)\sim
\mathcal{P}(b)$ and
$\mathbb{E}[\check{Y}]=\mathbb{E}[\tilde{B}]$. Thus conditioning on
$\tilde{B}$ and using \eqref{meansize} yields
\begin{eqnarray*}
&&\mathbb{E}_{\check{Y}}\bigl[\mathbb{E}\bigl[T(\check{Y})\mid \check{Y}\bigr]
\bigr]\\
&&\qquad=\mathbb {E}[\tilde{B}]-\mathbb{E} \Biggl[\sum_{m=1}^{\infty}
\frac{\tilde
{B}^m}{m!}\mathrm{e}^{-\tilde{B}} \sum_{k=1}^m
\frac{m!}{(m-k)!} (v_{k-1})^{m+1-k} G_{k-1}(1|V)
\Biggr].
\end{eqnarray*}
Interchanging the order of summation then yields, after elementary
algebra, that
%
%
\begin{equation}
\label{elocal}\qquad \mathbb{E}_{\check{Y}}\bigl[\mathbb{E}\bigl[T(\check{Y})\mid
\check{Y}\bigr]\bigr]=\mathbb {E}[\tilde{B}]-\sum_{k=1}^{\infty}v_{k-1}(-1)^k
\phi_{\tilde
{B}}^{(k)}(1-v_{k-1}) G_{k-1}(1|V).
\end{equation}

Turning to the size of the local susceptibility set of an individual in
a typical clique, first note that conditioning on $\tilde{B}$ and
using \eqref{backwdist} gives, for $k\in\mathbb{Z}_+$,
\begin{eqnarray*}
\mathbb{P}\bigl(S(\check{Y})=k\bigr) &=& \mathbb{E} \Biggl[\sum
_{m=k}^{\infty} \frac{\tilde{B}^m}{m!}\mathrm{e}^{-\tilde{B}}
\frac{m!}{(m-k)!} (v_k)^{m-k} G_k(1|V) \Biggr]
\\
&=& \mathbb{E} \bigl[{\tilde{B}}^k \mathrm{e}^{-\tilde
{B}(1-v_k)}G_k(1|V)
\bigr],
\end{eqnarray*}
whence
%
%
\begin{equation}
\label{suslocal} \mathbb{E}_{\check{Y}}\bigl[f_{S(\check{Y})|\check{Y}}(s)\bigr]= \sum
_{k=0}^{\infty} (-s)^k
\phi_{\tilde{B}}^{(k)}(1-v_k) G_k(1|V).
\end{equation}
\end{appendix}
\section*{Acknowledgements}
We thank the referees for their careful reading of the manuscript and
constructive comments which have improved considerably the presentation
of the paper.


%



\printaddresses

\end{document}